\documentclass[letterpaper,oneside, reqno]{amsart}
\usepackage{mathtools, graphicx, amsthm, amsfonts, amssymb}
\usepackage{breqn}
\usepackage[pdftex]{pict2e}
\usepackage[dvipsnames]{xcolor}
\usepackage[utf8]{inputenc}
\usepackage{times}
\usepackage[T1]{fontenc}
\usepackage{amsrefs}
\usepackage{microtype}
\usepackage[letterpaper]{geometry}
\usepackage{hyperref}
\usepackage{cleveref}

\usepackage{fancyhdr}
\pagestyle{fancy}
\lhead{}
\rhead{}

\fancyhead[CO]{\footnotesize Geometry of Rank Drop}
\fancyhead[RO]{\footnotesize \thepage}
\fancyfoot{}
\addtolength{\topmargin}{-2.0pt}
\setlength{\headheight}{10pt}

\usepackage[draft]{fixme}
\fxsetup{theme=color, mode=multiuser, noinline}
\FXRegisterAuthor{sa}{SA}{\color{red}Sameer}
\FXRegisterAuthor{rt}{RT}{\color{blue}Rekha}
\FXRegisterAuthor{ae}{AE}{\color{orange}Alperen}
\FXRegisterAuthor{ec}{EC}{\color{teal}Erin}

\theoremstyle{plain}
\newtheorem{thm}{Theorem}[section]
\newtheorem{cor}[thm]{Corollary}
\newtheorem{lem}[thm]{Lemma}

\newtheorem{defn}[thm]{Definition}
\newtheorem{prob}[thm]{Problem}

\theoremstyle{definition}

\theoremstyle{remark}
\newtheorem{rem}[thm]{Remark}
\newtheorem{ex}{Example}

\newcommand{\RR}{\mathbb R}
\newcommand{\PP}{\mathbb P}
\newcommand{\CC}{\mathbb C}
\newcommand{\QQ}{\mathbb Q}

\newcommand{\rank}{\operatorname{rank}}

\newcommand{\PGL}{\textup{PGL}}

\setlength{\unitlength}{1cm}
\setlength{\fboxsep}{0pt}

\title{The Geometry of Rank Drop in a Class of \\Face-Splitting Matrix Products: Part I}

\author{Erin Connelly}
\address{Department of Mathematics, University of Washington, Seattle}
\email{erin96@uw.edu}
\author{Sameer Agarwal}
\address{Google Inc., Seattle}
\email{sameeragarwal@google.com}
\author{Alperen Ergur}
\address{Department of Mathematics, University of Texas at San Antonio}
\email{alperen.ergur@utsa.edu}
\author{Rekha R. Thomas}
\address{Department of Mathematics, University of Washington, Seattle}
\email{rrthomas@uw.edu}

\begin{document}
\maketitle

\begin{abstract}
Given $k$ points 
$(x_i,y_i) \in \PP^2 \times \PP^2$, we characterize rank deficiency of the $k \times 9$ matrix $Z_k$ with rows $x_i^\top \otimes y_i^\top$ in terms of the geometry of the point configurations 
$\{x_i\}$ and $\{y_i\}$.  In this paper we present results for $k \le 6$. For $k \leq 5$, the geometry of the rank-drop locus is characterized by cross-ratios and basic (projective) geometry of point configurations. For the case $k=6$ the rank-drop locus is captured by the classical theory of cubic surfaces. The results for $k=7,8$ and $9$ are presented in the sequel to this paper~\cite{connelly-thomas-vinzant}.
\end{abstract}

\section{Introduction}
Let $\otimes$ denote the Kronecker product~\cite{LOAN200085}. We are interested in solving the following problem:
\begin{prob} \label{prob:main0} 
Given $k$ points $(x_i,y_i)\in\RR^3\times\RR^3$, $k \leq 9$, 
consider the $k \times 9$ 
matrix $Z_k$ whose rows are $x^\top_i \otimes y^\top_i$ for $i=1, \ldots, k$, i.e., 
\[
Z_k = \begin{bmatrix} x^\top_1 \otimes y^\top_1 \\ \vdots \\ x^\top_k \otimes y^\top_k \end{bmatrix}
\]
Delineate the geometry of point configurations  $\{x_i\}$ and $\{y_i\}$ for which $\rank(Z_k) < k$.
\end{prob}

In signal processing, the matrix $Z_k$ is known as the {\em face-splitting product}
~\cite{Slyusar} of the two matrices $X_k \in \RR^{k \times 3}$ with rows $x^\top_1, \ldots, x^\top_k$ and $Y_k \in \RR^{k \times 3}$ with rows $y^\top_1, \ldots, y^\top_k$. The face-splitting product is a special case of the {\em Khatri-Rao matrix product} in linear algebra and statistics~\cite{khatri-rao-1968}. However, our interest~\Cref{prob:main0} comes from an estimation problems in 3D computer vision known as the 7-point problem~\cites{hartley-zisserman-2004}. 
\begin{prob}[7-point problem]
\label{prob:fundamental}
Given 7 points $(\hat{x}_i, \hat{y}_i) \in \RR^2\times \RR^2$ find a matrix $F \in \RR^{3 \times 3}$ such that
\begin{align}
y^\top_i F x_i &= 0,\ \forall i = 1,\dots, 7 ~\text{ and }~\det(F) =0, 
\end{align}
where $x_i = \begin{bmatrix} \hat{x}_i \\ 1 \end{bmatrix}$ and $y_i = \begin{bmatrix} \hat{y}_i \\ 1 \end{bmatrix}$.
\end{prob}

In 3D computer vision the world (or the {\em scene}) is modeled as a collection of points in $\mathbb{P}^3$. Pinhole cameras imaging the scene are modeled as full rank linear projections from  $\mathbb{P}^3$ to $\mathbb{P}^2$.

Let $P_1,\dots, P_7 \in \mathbb{P}^3$ be a scene consisting of seven world points, and $A_1$ and $A_2$ be two pinhole cameras such that they map the scene to images points $x_1,\dots, x_7$ and $y_1,\dots,y_7$ respectively~\footnote{Here we are assuming that the images are finite points in $\mathbb{P}^2$, so they can be de-homogenized to points in $\mathbb{R}^2$}. A solution $F$ to the $7$-point problem that has rank $2$ is known as a {\em fundamental matrix}. The fundamental matrix determines the pair of cameras $A_1$ and $A_2$ up to a homography of $\mathbb{P}^3$~\cites{hartley-zisserman-2004, luong-faugeras}.

Observe that the linear constraints in~\ref{prob:fundamental} can be re-written as~\footnote{Here $\operatorname{vec}(\cdot)$ is the so called {\em vectorization} operator that concatenates the columns of a matrix into a vector.}  
\[(x_i^\top \otimes y_i^\top )\operatorname{vec}(F) = 0,\ \forall i = 1,\dots, k\]
where $\textup{vec}(F)$ is the $9$-dimensional vector obtained by concatenating the columns of $F$.
Thus solving the 7-point problem requires computing the intersection of the null space of a $7\times 9$ face-splitting product with a determinantal variety. Generically it will have three distinct solutions, one of which will always be real. When this is not the case, i.e., Problem~\ref{prob:fundamental} has less than three distinct solutions or has an infinite number of solutions, it is said to be ill-posed~\cite{demmel1997applied}. 

Besides being interesting on its own as a mathematical phenomenon, ill-posedness is of practical interest because of a meta-theorem in numerical analysis that says that the numerical conditioning of a problem is inversely related to the distance to the nearest ill-posed problem~\cites{demmel1987condition, demmel1987geometry, burgisser2013condition}. So, understanding the data for which~\Cref{prob:fundamental} is ill-posed also helps us understand how and when it is numerically hard to solve it using finite precision arithmetic.

There are two ways in which~\Cref{prob:fundamental} can have an infinite number of solutions. The first is that the null space of $Z_7$ has dimension greater than 2. The second is that the null space even though it is 2-dimensional, lies entirely inside the determinantal variety. So, a sufficient condition for ill-posedness is that the corresponding face-splitting product matrix be rank deficient and thus our interest in~\Cref{prob:main0}.

A first answer to~\Cref{prob:main0} is that $\rank(Z_k) < k$ if and only if all the maximal minors of $Z_k$ are zero. These minors are bi-homogeneous polynomials in $x_i$ and $y_i$, and typically do not shed much light on the geometry of  the point configurations $\{x_i \}$ and 
$\{y_i\}$ that cause $Z_k$ to drop rank. 
Our goal is to characterize the rank deficiency of $Z_k$ in terms of the geometry of  $\{x_i\}$ and $\{y_i\}$.

To this end, we begin by observing that the rank of $Z_k$ is a projective invariant.  First, note that multiplying the $x_i$'s and $y_i$'s by non-zero scalars scales the rows of $Z_k$ and does not change its rank.  Second, if $H_1,H_2$ are invertible matrices, then from the mixed-product property of Kronecker products we have that:
\begin{align}
Z'_k = \begin{bmatrix}
    (H_1 x_1)^\top \otimes (H_2 y_1)^\top\\
    \vdots \\
    (H_1 x_k)^\top \otimes (H_2 y_k)^\top
\end{bmatrix} = 
\begin{bmatrix}
    (x_1^\top \otimes y_1^\top)(H^\top_1 \otimes H^\top_2)\\
    \vdots \\
    (x_k^\top \otimes y_k^\top)(H^\top_1 \otimes H^\top_2)
\end{bmatrix} = Z_k (H^\top_1 \otimes H^\top_2).
\end{align}
The matrix $H_1^\top \otimes H_2^\top$ has  full rank since $H_1$ and $H_2$ are invertible, and hence, $\rank(Z'_k) = \rank(Z_k)$. These observations imply that $\rank(Z_k)$ is independent of scaling and choice of coordinates in $\RR^3\times\RR^3$. Therefore, we should be studying the problem over projective space. 
While our computer vision applications care about real input, mathematically there is no loss in assuming that the input points $(x_i,y_i)$ are complex because $\RR\subset\CC$.
Therefore, letting $\PP$ denote projective space over $\CC$ we pass to the following formulation of \Cref{prob:main0} which will be the main subject of 
study in this paper.
\begin{prob} \label{prob:main} 
Given $k$ points $(x_i,y_i)\in\PP^2\times\PP^2$, $k \leq 9$, 
consider the $k \times 9$ 
matrix $Z_k$ whose rows are $x^\top_i \otimes y^\top_i$ for $i=1, \ldots, k$, i.e., 
\[
Z_k= \begin{bmatrix} x^\top_1 \otimes y^\top_1 \\ \vdots \\ x^\top_k \otimes y^\top_k \end{bmatrix}.
\]
Delineate the geometry of point configurations  $\{x_i\}$ and $\{y_i\}$ for which $\rank(Z_k) < k$.
\end{prob}

Let $Z_k\{l\}$ denote a submatrix of $Z_k$ with $l$ of the $k$ rows.  If $\rank(Z_k\{l\}) < l$, then 
$\rank(Z_k) < k$, and any condition on the $l$ points $(x_i,y_i)$ that contribute the rows of  
$Z_k\{l\}$ and cause $Z_k\{l\}$ to drop rank will be a condition on $\{(x_i,y_i),i=1,\ldots, k\}$ such that $Z_k$ drops rank. 
We will call such a condition an {\em inherited condition} for rank drop. For each value of 
$k$ we will be most interested in the {\em non-inherited conditions} on the input that 
make $Z_k$ rank deficient.

As an illustration of the type of answers we will provide, consider the 
 version of  Problem~\ref{prob:main} in $\PP^1 \times \PP^1$. Suppose $\{x_i\}$ and $\{y_i\}$ are each sets of distinct points in $\PP^1$. 
 In 
 Theorem~\ref{thm:P1xP1} we prove that the $4 \times 4$ matrix $Z_4$  
is rank deficient if and only if the 
 cross-ratio of $x_1,x_2,x_3,x_4$ equals that of $y_1,y_2,y_3,y_4$. The cross-ratio is the fundamental projective invariant of four points in 
 $\PP^1$.

 It is a classical fact that the geometry of a finite set of points in $\PP^2$, that remains invariant under the action of $\textup{PGL}(3)$, can be expressed via polynomials in the $3 \times 3$ determinants (brackets) of the matrices whose rows are the points. 
 Many of our answers to Problem~\ref{prob:main} will be phrased in terms of these bracket expressions. The results have analogs in terms of cross-ratios when the points are in general position.

We solve Problem~\ref{prob:main} across this paper (Part I) and its sequel (Part II \cite{connelly-thomas-vinzant}). 
In this paper we address the cases $2 \le k \le 6$ and in the next we address $k=7,8,9$. A great deal of classical algebraic geometry emerges starting with $k=6$. This motivated the split since $k=6$ is rather elaborate and we wanted to keep the papers to 
a reasonable length. Also, the flavor of results in the two papers are slightly different. For $k \leq 6$, we are able to obtain both geometric and algebraic characterizations of rank drop without any assumptions on the points. 
For $k \geq 7$, the tools and results 
become more geometric and requires a type of genericity of the input.

\subsection{Related Work(WIP)}
The study of ill-posedness of Problem~\ref{prob:fundamental} has a long history in computer vision.

One prominent direction is the work on {\em critical configurations}. 
A configuration (or a reconstruction) is a set of cameras and world points in $\mathbb{P}^3$. It is said to be critical if there exist other projectively non-equivalent configurations that produce the same images
~\cites{hartley-zisserman-2004,kahl2002critical,bratelund2024classification,bratelund2views}. There is no simple way to go from these configurations to their images, which is what we need to study the ill-posedness of Problem~\ref{prob:fundamental}.

Perhaps the most well known result on the ill-posedness of~\ref{prob:fundamental} is that if $\{x_i\}$ and $\{y_i\}$ are related by a homography of $\mathbb{P}^2$ then \ref{prob:fundamental} has an infinite number of solutions~\cite{luong-faugeras}. This is far from a complete result as it is only a sufficient condition.

Fan et al.\ study conditioning of~\ref{prob:fundamental}  using a definition based on the world scene \cite{fan2022instability}[Definition 1]. Using this definition allows them to write down a simple formula for the condition number \cite{fan2022instability}[Proposition 2]. Their world-space result~\cite{fan2022instability}[Theorem 2] on the ill-posed locus of~\ref{prob:fundamental} has substantial overlap with the results on critical configurations~\cites{hartley2007critical,  hartley-zisserman-2004, bratelund2views}.  
Their result in image-space~\cite{fan2022instability}[Theorem 4], which is what is relevant to this paper, is algebraic. It is stated in terms of the existence of a certain high degree polynomial that vanishes on $\{x_i\}$ and $\{y_i\}$ as opposed to describing their geometry.

Bertolini, Magri and Turrini have studied critical configurations in multiple settings. The paper closest to our work is \cite{bertolini-magri-turrini} which studies critical 
configurations for $2$-views in terms of $\{x_i\}$ and $\{y_i\}$. 
They show that if the images admit multiple real reconstructions then there is a 
quadratic Cremona transformation taking $x_i \mapsto y_i$. This is also the content of Lemma 3.6 in our paper \cite{connelly-thomas-vinzant}, where we develop this connection further.

\subsection*{Summary of results and organization of the paper.} 

  In Section~\ref{sec:simplifications} we show that one can make some simplifying assumptions on the input to Problem~\ref{prob:main} without any loss of generality. 
  While these assumptions are not needed for the theory, they are helpful and sometimes critical for computations.  
 In Section~\ref{sec:invariant theory tools} we review the tools from invariant theory that are needed in this paper.

In Section~\ref{sec:k less than equal 4} we present our results for $k \leq 4$. 
{\bf Theorem~\ref{thm:k=2,3,4}} solves Problem~\ref{prob:main} for $k=2,3,4$. 
Section~\ref{sec:k=5} considers $k=5$ and presents two theorems.
{\bf Theorem~\ref{thm:k=5}} characterizes the new conditions for rank drop in 
$Z_5$ in terms of bracket equations \eqref{eq: k=5 brackets}. 
A consequence is that when $k=5$, a  necessary condition for rank drop is that one set of points (say the $y_i$'s) is on a line $\ell$. 
The second result is {\bf Theorem~\ref{thm: k=5 geometry theorem}} which gives a geometric characterization of rank deficiency: when the $y_i$'s on the line $\ell$ are distinct and the $x_i$'s are in general position, then the rank drop of $Z_5$ is equivalent to the existence of an isomorphism from the conic $\omega$ through the $x_i$'s to the line $\ell$, taking $x_i$ to $y_i$. 

Finally, we consider the  case $k=6$ in Section~\ref{sec:k=6} where we will see that  Problem~\ref{prob:main} is intimately related with the theory of cubic surfaces in $\PP^3$.  An important new fact here 
is that given $5$ points $(x_i,y_i)$ in general position there is a unique $6$th point $(x_6,y_6)$ that make 
$Z_6$ rank deficient. This $6$th point admits a rational expression in terms of the $5$ points 
({\bf Lemma~\ref{lem:6th point pair}}). 

The results in this section are organized in two parts. In Section~\ref{sec:k=6 algebra} we characterize the rank deficiency of $Z_6$ using brackets; {\bf Theorem~\ref{thm:k=6 neither side in a line}} covers the case of neither the $x_i$'s nor the $y_i$'s being in a line, and {\bf Theorem~\ref{thm: k=6 one side in a line}} addresses the situation in which one set of points is in a line. When the points are in general position, we identify a simple algebraic check ({\bf Theorem~\ref{thm:k=6 general position}}) for rank deficiency in terms of a vector of classical invariants  \eqref{eq:bar vectors are multiples of each other} of $6$ points in $\PP^2$.

In Section~\ref{sec:k=6 geometry}, we explain the origins of the algebraic theorems in Section~\ref{sec:k=6 algebra} 
using geometry. The central result of this section is {\bf Theorem~\ref{thm:k=6 central theorem}} 
which  says that when the points are  in general position, $\rank(Z_6) < 6$ if and only if there is a smooth cubic surface $\mathcal{S}$ that arises as the blow up of $\PP^2$ in both the $x$ points and the $y$ points such that the two sets of exceptional curves form a {\em Schl\"afli double six} on $\mathcal{S}$. This result allows 
for a simple determinantal representation 
of $\mathcal{S}$ using the null space of $Z_6$. 

We then interpret Theorem~\ref{thm:k=6 central theorem} in two different ways. The first is {\bf Theorem~\ref{thm:Werner conics}}
that identifies the unique $6$th point, given $5$ points in general position, that forces $Z_6$ to drop rank. The construction in this theorem relies on a recipe due to Sturm via conics in $\PP^2 \times \PP^2$, and this result 
proves the algebraic Theorem~\ref{thm:k=6 neither side in a line}. 
 
Theorem~\ref{thm:k=6 central theorem} can be rephrased in yet another way using the 
{\em Cremona hexahedral form} of a cubic surface ({\bf Theorem~\ref{thm:canonical blow up} }). This result explains the 
algebraic Theorem~\ref{thm:k=6 general position}. Using the invariants needed in this theorem along with the classical {\em Joubert invariants} of $6$ points in a line, we complete the answer to Problem~\ref{prob:main} when $k=6$ by proving 
Theorem~\ref{thm: k=6 one side in a line}.

\subsection*{A note about computations.} 
All of the algebraic statements in this paper are proven using the computational algebra software package Macaulay2 \cite{M2}. The codes that we used can be found at \\
\centerline{\url{github.com/rekharthomas/rankdrop}}\\
A basic strategy we employ to understand rank drop is to use Macaulay2 to decompose the ideal of maximal minors of $Z_k$ over $\QQ$ into its associated primes (over $\QQ$). Then we interpret these ideals geometrically to state the conditions for rank drop. 
These geometric conditions will hold over $\RR$ and $\CC$ as well since the prime decomposition over $\QQ$ can only refine over the larger fields. 

\subsection*{Acknowledgments} We thank Jarod Alper,  Timothy Duff, and Giorgio Ottaviani for helpful discussions. A.E. is supported by NSF CCF 2110075.

\section{Preprocessing the input}
\label{sec:simplifications}

We begin by showing that one can make simplifying assumptions on the input to Problem~\ref{prob:main} which is a collection of  $k \leq 6$ points $(x_i,y_i) \in \PP^2 \times \PP^2$.  

We  sometimes refer to $(x_i,y_i)$ as a {\em point pair}, especially when we need to decouple the 
components and think of the configurations $x_i$ and $y_i$ separately. It will also be 
convenient to refer to the 
$\PP^2$ containing $\{x_i\}$ as $\PP^2_x$ and the $\PP^2$ containing $\{y_i\}$ as $\PP^2_y$. Let $\textup{PGL}(3)$ be the group of invertible $3 \times 3$ matrices over $\CC$ up to scale. 
An element 
$H \in \textup{PGL}(3)$ induces a homography (projective isomorphism) on $\PP^2$.

In the introduction we informally proved that the rank of $Z_k = (x_i^\top \otimes y_i^\top)_{i=1}^k$ is invariant under  $\textup{PGL}(3)\times \textup{PGL}(3)$ acting on $\PP^2_x \times \PP^2_y$ by left multiplication. This allows us to change bases in $\PP^2_x$ and $\PP^2_y$ independently. We state the result formally below:

\begin{lem} \label{lem:rank drop is invariant under homographies}
Let $(x_i,y_i) \in \PP^2 \times \PP^2$ for $i=1,
\ldots,k$ and let $H_1,H_2 \in \PGL(3)$ be  homographies acting on $\mathbb{P}_x^2$ and $\mathbb{P}^2_y$ respectively (i.e., $(x,y)\mapsto (H_1x,H_2y)$). Then $Z_k = \left (x^\top_i \otimes y^\top_i\right )_{i=1}^k$ has the same rank as $Z'_k = \left((H_1x_i)^\top \otimes( H_2y_i)^\top\right)_{i=1}^k$.
\end{lem}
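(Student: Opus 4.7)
The plan is to reduce this to the computation that was already sketched informally in the introduction, and then to address the one subtlety that arises when passing from affine representatives in $\CC^3$ to projective points in $\PP^2$. Specifically, the lemma asserts invariance under the action of $\PGL(3) \times \PGL(3)$ on projective points, so I first need to fix affine representatives $x_i, y_i \in \CC^3 \setminus \{0\}$ and matrix representatives $H_1, H_2 \in \GL(3)$ of the homographies, and then argue that the ambiguity in these choices does not affect the statement.

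Next I would carry out the Kronecker product manipulation. Using the mixed-product property $(AB) \otimes (CD) = (A \otimes C)(B \otimes D)$, each row of $Z'_k$ can be rewritten as
\[
(H_1 x_i)^\top \otimes (H_2 y_i)^\top = (x_i^\top H_1^\top) \otimes (y_i^\top H_2^\top) = (x_i^\top \otimes y_i^\top)(H_1^\top \otimes H_2^\top).
\]
Stacking these identities over $i = 1, \ldots, k$ gives the matrix factorization $Z'_k = Z_k\,(H_1^\top \otimes H_2^\top)$. Because $H_1$ and $H_2$ lie in $\GL(3)$, their Kronecker product $H_1^\top \otimes H_2^\top$ is a $9 \times 9$ matrix with $\det(H_1^\top \otimes H_2^\top) = \det(H_1)^3 \det(H_2)^3 \neq 0$, so it is invertible. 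Right-multiplication by an invertible matrix preserves rank, yielding $\rank(Z'_k) = \rank(Z_k)$.

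Finally I would dispose of the projective ambiguities. Replacing each matrix representative $H_j$ by a nonzero scalar multiple $\lambda_j H_j$ only rescales the factor $H_1^\top \otimes H_2^\top$ by $\lambda_1\lambda_2$, which remains invertible; and replacing each $x_i$ or $y_i$ by a nonzero scalar multiple rescales the $i$-th row of both $Z_k$ and $Z'_k$ by the same nonzero factor, which again does not change the rank. Thus the rank depends only on the projective data $(x_i, y_i) \in \PP^2 \times \PP^2$ and the homographies in $\PGL(3)$, completing the proof.

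There is no real obstacle here; the only thing to be careful about is keeping track of the fact that $\Cref{prob:main}$ is phrased projectively while the Kronecker product identity lives in $\CC^3$, which is handled by the scaling remark above.
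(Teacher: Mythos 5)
Your proof is correct and follows essentially the same route as the paper, which establishes the factorization $Z'_k = Z_k(H_1^\top \otimes H_2^\top)$ via the mixed-product property in the introduction and invokes invertibility of the Kronecker factor; your additional care about the choice of scalar representatives for the projective points and the homographies is a harmless (and welcome) elaboration of the same argument.
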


Lemma~\ref{lem:rank drop is invariant under homographies} allows us to make certain assumptions 
on the configurations $\{x_i\}$ and $\{y_i\}$ which we phrase as corollaries to the lemma. 
While our proofs do not need these assumptions, they become critical for computations in Macaulay2. 
The first assumption is that the $x_i$'s and 
$y_i$'s in the input 
to Problem~\ref{prob:main} are finite points. With a view towards computations, in the following lemma and in several parts of the paper, we fix representatives of points in $\PP^2$ and write them out as vectors in $\CC^3$. 

\begin{cor}\label{cor:assume last coordinate is 1}
Without loss of generality, we can assume that the input to Problem~\ref{prob:main} 
is of the form $x_i = (x_{i1},x_{i2},1)^\top$ and 
$y_i = (y_{i1},y_{i2},1)^\top$ for all $i=1, \ldots, k$, where 
$x_{i1},x_{i2},y_{i1},y_{i2} \in \CC$. Hence  
\begin{align}\label{eq:rowZ}
x_i^\top \otimes y_i^\top = (
x_{i1}y_{i1}, x_{i1}y_{i2} , x_{i1},  x_{i2}y_{i1}, x_{i2}y_{i2},  x_{i2},  y_{i1}, y_{i2}, 1).
\end{align}
\end{cor}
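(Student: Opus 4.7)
The plan is to use Lemma~\ref{lem:rank drop is invariant under homographies} to replace the input $(x_i,y_i)$ by $(H_1 x_i, H_2 y_i)$ for cleverly chosen homographies $H_1,H_2 \in \PGL(3)$, and to argue that we can always pick $H_1,H_2$ so that every transformed point has a representative whose third coordinate is $1$. The row formula~\eqref{eq:rowZ} is then simply the definition of the Kronecker product $x_i^\top \otimes y_i^\top$ written out entry-by-entry.

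First I would dispense with the affine representative step. A point $p = (p_1,p_2,p_3)^\top \in \PP^2$ admits a representative with last coordinate $1$ precisely when $p_3 \neq 0$, i.e., when $p$ does not lie on the line at infinity $L_\infty = \{z_3 = 0\}$. Thus it suffices to find $H_1 \in \PGL(3)$ such that none of $H_1 x_1,\ldots,H_1 x_k$ lies on $L_\infty$, or equivalently, such that none of the $x_i$ lies on the preimage line $H_1^{-1}(L_\infty)$.

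Next I would produce such an $H_1$. Since $\{x_1,\dots,x_k\}$ is a finite set in $\PP^2$, the set of projective lines in $\PP^2$ passing through at least one $x_i$ is a finite union of lines in the dual $\PP^2$, which is a proper closed subset. Picking any line $\ell \subset \PP^2$ in the complement, I can choose $H_1 \in \PGL(3)$ that sends $\ell$ to $L_\infty$; then $H_1 x_i \notin L_\infty$ for every $i$, so each $H_1 x_i$ has a representative with third coordinate $1$. Running the same argument on $\{y_i\}$ produces $H_2$. By Lemma~\ref{lem:rank drop is invariant under homographies}, $\rank(Z_k)$ is unchanged when $(x_i,y_i)$ is replaced by $(H_1 x_i, H_2 y_i)$, so the new input is equivalent to the original for the purposes of Problem~\ref{prob:main}.

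Finally, with $x_i = (x_{i1},x_{i2},1)^\top$ and $y_i = (y_{i1},y_{i2},1)^\top$ in hand, \eqref{eq:rowZ} follows by directly expanding
\[
x_i^\top \otimes y_i^\top = (x_{i1}\, y_i^\top,\ x_{i2}\, y_i^\top,\ 1\cdot y_i^\top).
\]
There is no real obstacle here; the only point requiring a moment of care is the existence of the avoidance line $\ell$, but this is immediate from the finiteness of the input and the infinitude of lines in $\PP^2$ over $\CC$.
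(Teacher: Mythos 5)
Your proof is correct and follows essentially the same route as the paper: choose a line avoiding the finitely many input points, move it to the line at infinity by a homography, and invoke Lemma~\ref{lem:rank drop is invariant under homographies} to preserve the rank before expanding the Kronecker product. The only cosmetic difference is that you use two separate homographies $H_1, H_2$ while the paper uses a single common line and homography for both copies of $\PP^2$; either works.
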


\begin{proof}
Let $x_i \in \PP^2_x$ and $y_i\in\PP^2_y$ be $k$ pairs of points where $k$ is finite, and suppose that some of these points lie on the line at infinity in their $\PP^2$. Since we have only finitely many points, there exists a line $\ell$, common to both $\PP^2$, 
which contains none of these points. Let $H$ be a homography that sends 
$\ell$ to the line at infinity in both $\PP^2$. Then $Hx_i$ and $Hy_i$ are finite points for all $i$. By Lemma~\ref{lem:rank drop is invariant under homographies}, the rank of $Z_k = (x_i^\top \otimes y_i^\top)_{i=1}^k$ equals the rank of  
$Z'_k=((Hx_i)^\top \otimes (Hy_i)^\top)_{i=1}^k$ and hence we can work with the transformed points to understand 
the rank deficiency of $Z_k$.
\end{proof}

The other assumption we can make is that sufficiently generic configurations 
allow some of the points to be fixed, which will prove useful for computations. 

\begin{cor} \label{cor:assumptions}
When no three of the $x_i$'s or $y_i$'s are collinear in their 
respective $\PP^2$, we may 
fix the first four in each set to be:
\begin{align} \label{eq:finite fixing}
\begin{split}
x_1 = y_1 = (0,0,1),\,\,\, x_2=y_2=(1,0,1),\\
x_3=y_3 = (0,1,1), \,\,\,
x_4 = y_4 = (1,1,1).
\end{split}
\end{align}
\end{cor}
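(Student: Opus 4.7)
The plan is to invoke the fundamental theorem of projective geometry, which states that $\PGL(3)$ acts simply transitively on ordered projective frames in $\PP^2$; equivalently, for any two ordered $4$-tuples of points in $\PP^2$, each with no three collinear, there is a unique homography carrying one to the other.

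First, I would observe that the four target points $(0,0,1)$, $(1,0,1)$, $(0,1,1)$, $(1,1,1)$ form a projective frame in $\PP^2$ (indeed, no three of them are collinear, as one checks by a $3\times 3$ determinant). Since the hypothesis guarantees that $x_1,x_2,x_3,x_4$ have no three collinear, they also form a projective frame in $\PP^2_x$; hence there exists a (unique) $H_1 \in \PGL(3)$ with $H_1 x_i = (x_i)_{\text{target}}$ for $i=1,\ldots,4$. By the same argument applied to $y_1,y_2,y_3,y_4$ in $\PP^2_y$, there exists $H_2 \in \PGL(3)$ with $H_2 y_i = (y_i)_{\text{target}}$ for $i=1,\ldots,4$.

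Now I would apply Lemma~\ref{lem:rank drop is invariant under homographies} to the pair $(H_1,H_2)$: the matrix $Z_k = (x_i^\top \otimes y_i^\top)_{i=1}^k$ has the same rank as $Z'_k = ((H_1 x_i)^\top \otimes (H_2 y_i)^\top)_{i=1}^k$. After this change of coordinates, the first four $x_i$'s and $y_i$'s are exactly the points listed in \eqref{eq:finite fixing}, while the remaining points $(H_1 x_i, H_2 y_i)$ for $i \geq 5$ inherit the original generality hypothesis (no three collinear) because $\PGL(3)$ preserves collinearity. So the transformed input satisfies the same collinearity hypotheses and the fixed normalization.

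There is no real obstacle here; the only thing to be careful about is that the four target points in $\eqref{eq:finite fixing}$ are genuinely in general position, so that the fundamental theorem of projective geometry applies and produces the required homographies independently on the two copies of $\PP^2$.
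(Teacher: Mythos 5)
Your proof is correct and is exactly the argument the paper intends: the corollary is stated without proof as an immediate consequence of Lemma~\ref{lem:rank drop is invariant under homographies} together with the transitivity of $\PGL(3)$ on projective frames, which is precisely what you spell out. The verification that the four target points form a frame and that the two homographies can be chosen independently on $\PP^2_x$ and $\PP^2_y$ is all that is needed.
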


At certain times, for the sake of symmetry, we will instead fix the first four points as:
\begin{align} \label{eq:canonical fixing}
\begin{split}
x_1 = y_1 = (1,0,0),\,\,\, x_2=y_2=(0,1,0),\\
x_3=y_3 = (0,0,1), \,\,\,
x_4 = y_4 = (1,1,1).
\end{split}
\end{align}
Since the two fixings are related by a homography, results 
from one fixing transfer to the other. 

\section{Cross-ratios and brackets}
\label{sec:invariant theory tools}

By Lemma~\ref{lem:rank drop is invariant under homographies}, the rank deficiency of $Z_k$ is invariant under the action of $\textup{PGL(3)} \times \textup{PGL}(3)$ on $\PP^2 \times \PP^2$ given by $(H_1,H_2) \cdot (x,y) \mapsto (H_1x,H_2y)$. Therefore, the input to Problem~\ref{prob:main} can be thought of as the 
$\textup{PGL}(3)$-orbits of $k$ points in each $\PP^2$.  
The intrinsic geometry of $k$ points $p_1, \ldots, p_k \in \PP^2$ are the properties of the configuration that remain intact/invariant under the action of $\textup{PGL}(3)$. 
Since an element of $\textup{PGL}(3)$ can be assumed to have determinant $1$, the above geometry  
is precisely the invariant properties, under the action of $\textup{SL}(3)$, of the configuration 
obtained by replacing each $p_i \in \PP^2$ with a representative in $\CC^3$.
We now introduce the basic tools from the invariant theory of point sets needed to formulate our 
results. There are numerous references for this material such as \cite{coble}, \cite{derksen_kemper}, \cite{dolgachev_ortland}, \cite{dolgachev} and \cite{SturmfelsInvariant}.

Let $\mathbb{C}[{\bf p}] := \CC[p_{i1},p_{i2},p_{i3}, i=1, \ldots, k]$ be the polynomial ring in the variables $p_{ij}$ for $i=1,\ldots,k$, $j=1,2,3$, that represent the coordinates of $k$ points 
$p_1, \ldots, p_k$ in $\PP^2$ written via representatives in $\CC^3$. A polynomial $f \in \mathbb{C}[{\bf p}]$ is {\em invariant} under $\textup{SL}(3)$ if $f(p_1, \ldots, p_k) = 
f(Hp_1, \ldots, Hp_k)$ for all $H \in \textup{SL}(3)$. 
It is a well-known classical result that the invariant ring $\mathbb{C}[{\bf p}]^{\textup{SL}(3)}$, which is the collection of all polynomials in $\CC[{\bf p}]$ that are invariant under $\textup{SL}(3)$, is generated by the $3 \times 3$ minors of the matrix whose rows are the symbolic 
$p_1^\top, \ldots, p_k^\top$. In other words, $\mathbb{C}[{\bf p}]^{\textup{SL}(3)}$ is generated as an algebra by the {\em bracket polynomials} 
\begin{equation}
[ijl] :=\det\begin{bmatrix}p_{i1} & p_{i2} & p_{i3}\\p_{j1} & p_{j2} & p_{j3}\\p_{l1} & p_{l2} & p_{l3}\end{bmatrix}
\end{equation}
for all distinct choices of $i,j,l \in \{1, \ldots, k\}$. 
In our situation, we will write $[ijl]_x$ (respectively, $[ijl]_y$) when the 
input points come from $\{x_i\}$ (respectively, $\{y_i\}$). Equations in brackets can be used to express geometry. For example, 
three points $p_i, p_j, p_l \in \PP^2$ are collinear if and only if 
$[ijl]=0$, and $6$ points lie on a conic if and only if they satisfy the bracket equation \eqref{eq:conic condition} in Lemma~\ref{lem: conic formula}.

Often we consider points in a line $\ell$ in $\PP^2$ as points in $\PP^1$ where we can compute the bracket $[ij]$ for $p_i,p_j\in\ell$ by identifying $\ell$ with the line parameterized as say $(*,*,0)$, and then dropping the $0$ coordinate to get
\begin{equation}
[ij] :=\det\begin{bmatrix}p_{i1} & p_{i2}\\p_{j1} & p_{j2}\end{bmatrix}.
\end{equation}

When we have many points $p_1,\ldots,p_k$ on a line and a point $u$ not on that line, the $\PP^2$ brackets $[iju]$ will be closely related to the $\PP^1$ brackets $[ij]$.

\begin{lem}\label{lem:simplifyng 3x3 brackets - line}
For points $p_1,\ldots,p_k$ on a line and a point $u$ not on the line, there exists a nonzero scalar $\lambda$ depending only on the choice of coordinate representative of $u$ such that $[iju]=\lambda[ij]$ for all $i,j$.
\end{lem}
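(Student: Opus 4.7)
The plan is a one-line cofactor expansion once one works in the coordinates already singled out by the definition of the $\PP^1$ bracket $[ij]$ in the paragraph preceding the lemma. First I would fix a coordinate system on $\PP^2$ in which the line $\ell$ containing $p_1,\ldots,p_k$ is the line $\{z=0\}$, i.e.\ the points of $\ell$ are parameterized as $(*,*,0)$. This is the same identification $\ell \cong \PP^1$ under which $[ij]$ is defined as the $2\times 2$ determinant of the first two coordinates of $p_i$ and $p_j$. In these coordinates, the outside point $u$ has some representative $(u_1,u_2,u_3)^\top$ with $u_3 \neq 0$, precisely because $u \notin \ell$.

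Next, with $p_i = (p_{i1},p_{i2},0)^\top$ and $p_j = (p_{j1},p_{j2},0)^\top$, expanding the $3\times 3$ bracket
\[
[iju] \;=\; \det\begin{bmatrix} p_{i1} & p_{i2} & 0 \\ p_{j1} & p_{j2} & 0 \\ u_1 & u_2 & u_3 \end{bmatrix}
\]
along its third column immediately yields $u_3 \cdot \det\begin{bmatrix} p_{i1} & p_{i2} \\ p_{j1} & p_{j2}\end{bmatrix} = u_3 \cdot [ij]$. So $\lambda := u_3$ does the job: it is nonzero, it is independent of the indices $i$ and $j$, and it depends on $u$ only through the chosen representative in $\CC^3$ (rescaling $u$ by $c \in \CC^\times$ rescales $\lambda$ by the same factor $c$).

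There is no real obstacle here; the entire content of the lemma is that once $\ell$ is placed in the standard form used to define $[ij]$, the third column of the matrix for $[iju]$ has zeros in both the $p_i$ and $p_j$ slots, forcing the $3\times 3$ determinant to factor through the $2 \times 2$ $\PP^1$ bracket. The scalar $\lambda$ is literally the component of $u$ transverse to the chosen parameterization of $\ell$.
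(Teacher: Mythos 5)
Your proof is correct and is essentially identical to the paper's: both place the line at $\{z=0\}$ so the $p_i$ have third coordinate zero, then expand the determinant along the third column to obtain $[iju]=u_3[ij]$ with $\lambda=u_3\neq 0$. Nothing further is needed.
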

\begin{proof}
We can assume without loss of generality that $p_i=(p_{i1},p_{i2},0)$ for all $i$. We then calculate
\begin{equation}
[iju]=\text{det}\begin{bmatrix}
p_{i1} & p_{i2} & 0\\
p_{j1} & p_{j2} & 0\\
u_1 & u_2 & u_3
\end{bmatrix}=u_3[ij].
\end{equation}
\end{proof}

\begin{defn}
The {\bf cross-ratio} of an ordered list of $4$ points $p_1,\ldots,p_4\in\PP^1$ is
\begin{equation} \label{eq:cross-ratio}
(p_1,p_2;p_3,p_4):=\frac{[13][24]}{[14][23]}.
\end{equation}
\end{defn}

The cross-ratio is the only projective invariant of 
$4$ points in $\PP^1$ in the following sense. 

\begin{lem} \label{lem:cr=homography}
If $p_1,\ldots,p_4\in\PP^1$ and $q_1,\ldots,q_4\in\PP^1$ are two sets of points then $(p_1,p_2;p_3,p_4)=(q_1,q_2;q_3,q_4)$ if and only if there exists a homography $H:\PP^1\to\PP^1$ such that $Hp_i=q_i$ for all $i$. 
\end{lem}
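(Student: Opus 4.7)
The plan is to prove the two directions separately, leveraging the bracket-level transformation law for the forward direction and sharp 3-transitivity of $\PGL(2)$ on $\PP^1$ for the converse.

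For the forward direction, suppose $H \in \PGL(2)$ with $Hp_i = q_i$ for $i=1,\ldots,4$. Lift $H$ to a matrix in $\GL(2)$; then for any $i,j$ we have $[ij]_q = \det(H)\,[ij]_p$, since the bracket is just the $2\times 2$ determinant of the matrix whose rows are the chosen representatives. Plugging this into the definition \eqref{eq:cross-ratio} of the cross-ratio, the four factors of $\det(H)$ in numerator and denominator cancel, giving $(q_1,q_2;q_3,q_4) = (p_1,p_2;p_3,p_4)$.

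For the converse, I first need to know that given any two triples of distinct points in $\PP^1$, there is a unique homography carrying one to the other; this is the classical sharp 3-transitivity of $\PGL(2)$, proved by writing the linear system that determines $H$ and observing that the distinctness of the points makes it nonsingular. Assuming the triples $(p_1,p_2,p_3)$ and $(q_1,q_2,q_3)$ are each made of distinct points (the cross-ratio is only a well-defined element of $\CC \setminus \{0\}$ when each of $[13],[14],[23],[24]$ is nonzero, so this is automatic up to the relevant indexing), let $H$ be the unique homography with $Hp_i = q_i$ for $i=1,2,3$, and set $q_4' := Hp_4$. By the forward direction applied to the quadruple $(p_1,\ldots,p_4)$ under $H$, we get
\begin{equation*}
(q_1,q_2;q_3,q_4') = (p_1,p_2;p_3,p_4) = (q_1,q_2;q_3,q_4).
\end{equation*}

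Finally, I would show that the map $\Phi:\PP^1 \to \PP^1$ defined by $\Phi(t) := (q_1,q_2;q_3,t)$ is a homography (hence injective), which forces $q_4' = q_4$ and completes the proof. To see $\Phi$ is a homography, write it in an affine chart: if $q_i = (a_i : 1)$ and $t=(t_1:t_2)$, then $\Phi(t)$ is a Möbius transformation in $t_1/t_2$, namely the standard formula sending $q_1 \mapsto \infty$, $q_2 \mapsto 0$, $q_3 \mapsto 1$. The main (very minor) obstacle is bookkeeping around degenerate or infinite coordinates, which is handled cleanly by working projectively with brackets throughout rather than with affine ratios, so that no case-splitting on which $q_i$ equals $\infty$ is needed.
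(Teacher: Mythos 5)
Your argument is correct, and in fact the paper offers no proof of this lemma at all: it is stated as a classical fact, with the surrounding discussion deferring to Semple--Kneebone. Your two-step proof --- the $\det(H)$-cancellation in the brackets for the forward direction, and sharp $3$-transitivity of $\PGL(2)$ plus injectivity of $t\mapsto(q_1,q_2;q_3,t)$ for the converse --- is exactly the standard argument that the citation stands in for, so there is nothing to compare against. The one point worth making explicit is the hypothesis you flag parenthetically: as literally stated the lemma fails for degenerate configurations (e.g.\ $p_1=p_3$ with $p_2\neq p_4$ and $q_2=q_4$ with $q_1\neq q_3$ give equal cross-ratios $0$ but admit no homography), so your standing assumption that each triple $(p_1,p_2,p_3)$, $(q_1,q_2,q_3)$ consists of distinct points is genuinely needed; the paper makes the same assumption implicitly, since it only invokes the lemma when the four points on each side are distinct.
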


Permuting the points on the line changes the cross-ratio in a systematic way, i.e., if 
\begin{equation}
(p_1,p_2;p_3,p_4)=(q_1,q_2;q_3,q_4)
\end{equation}
then
\begin{equation}
(p_{\sigma(1)},p_{\sigma(2)};p_{\sigma(3)},p_{\sigma(4)})=(q_{\sigma(1)},q_{\sigma(2)};q_{\sigma(3)},q_{\sigma(4)})
\end{equation}
for all $\sigma\in S_4$.
Thus we  do not need to consider multiple orderings.
The cross-ratio is $0,1,\infty$, or $\frac{0}{0}$ if and only if the points are not distinct. See \cite{semple-kneebone}*{III, \S 4-5}.

\begin{ex}\label{ex:brackets better} We now give an example to illustrate that equations in 
brackets can be more robust than equalities of cross-ratios.
Consider the following points in $\PP^1$ where the first set has repetition:
\begin{align*}
x_1&=(1,1)\quad x_2=(1,1) &\quad\quad
y_1&=(8,1)\quad y_2=(4,1)\\
x_3&=(1,1)\quad x_4=(3,1) &\quad\quad
y_3&=(2,1)\quad y_4=(5,1).
\end{align*}
Then 
\begin{equation}
(x_1,x_2;x_3,x_4)=\frac{0}{0},\quad\quad (y_1,y_2;y_3,y_4)=-1
\end{equation}
and therefore the cross-ratios are not equal. 
However, if we write the cross-ratio equality 
\begin{equation}\label{eq:cross-ratios equality}
(x_1,x_2;x_3,x_4)= (y_1,y_2;y_3,y_4)
\end{equation}
in bracket form as 
\begin{equation}\label{eq:bracket equality}
[13]_x[24]_x[14]_y[23]_y=[13]_y[24]_y[14]_x[23]_x
\end{equation}
then equality does hold. 
\end{ex}

In Example~\ref{ex:brackets better}, the points $(x_i,y_i)$ lie on the variety defined by the bracket equation \eqref{eq:bracket equality} but do not satisfy the cross ratio equality \eqref{eq:cross-ratios equality}. 
The discrepancy is because cross-ratios are well-defined only on an open set, while the corresponding bracket equation holds on the Zariski closure of this open set. Since our interest is in describing the algebraic variety of input points that make $Z_4$ rank deficient, the bracket expressions are the correct choice. 
However, whenever the points are in sufficiently general position, we can pass to cross-ratio expressions which, besides being elegant, have the advantage that they can be visualized easily. 

We will also need planar and conic cross-ratios which we now define.
\begin{defn}
The {\bf $5$ point cross-ratio} of $p_1,\ldots,p_5\in\PP^2$ is 
\begin{equation} \label{eq:5 point cross-ratio}
 (p_1,p_2;p_3,p_4;p_5) :=\frac{[135][245]}{[145][235]}.
\end{equation}
This is also called a {\bf planar cross-ratio} or the {\bf cross-ratio around} $p_5$. 
\end{defn}

Note that planar cross-ratios are preserved under a homography of $\PP^2$.  For $5$ distinct points, the cross-ratio around $x_5$ can be obtained geometrically by drawing the $4$  lines $\overline{p_ip_5}$ for $i=1,2,3,4$, cutting them with a transversal, and then computing the cross-ratio of the intersection points. See Figure~\ref{fig:planar cross-ratio}. 
Also, the planar cross-ratio is transformed by permutations of $p_1, \ldots,p_4$ similarly to the line cross-ratio.

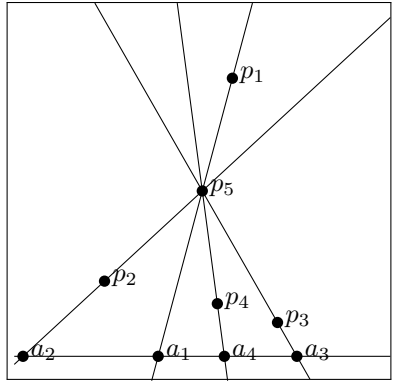
\begin{figure}
\fbox{
\begin{picture}(5,5)
\put(2.5,2.5){{\circle*{0.15}}\hbox{\kern3pt \texttt{$p_5$}}}
\put(2.9,4){{\circle*{0.15}}\hbox{\kern3pt \texttt{$p_1$}}}
\put(1.2,1.3){{\circle*{0.15}}\hbox{\kern3pt \texttt{$p_2$}}}
\put(3.5,0.75){{\circle*{0.15}}\hbox{\kern3pt \texttt{$p_3$}}}
\put(2.7,1){{\circle*{0.15}}\hbox{\kern3pt \texttt{$p_4$}}}
\put(0.11667,0.3){{\circle*{0.15}}\hbox{\kern3pt \texttt{$a_2$}}}
\put(1.91333,0.3){{\circle*{0.15}}\hbox{\kern3pt \texttt{$a_1$}}}
\put(2.79333,0.3){{\circle*{0.15}}\hbox{\kern3pt \texttt{$a_4$}}}
\put(3.7571,0.3){{\circle*{0.15}}\hbox{\kern3pt \texttt{$a_3$}}}
\thinlines
\put(0,0.3){\line(1,0){5}}
\put(3.1667,5){\line(-0.4,-1.5){1.34}}
\put(0,0.19231){\line(1,0.92307692){5}}
\put(1.071429,5){\line(1,-1.75){2.86}}
\put(2.166667,5){\line(1,-7.5){0.67}}
\end{picture}}
\caption{For distinct points $p_1,\ldots,p_5 \in \PP^2$, the planar cross-ratio $(p_1,p_2;p_3,p_4;p_5)$ equals the $4$ point cross-ratio $(a_1,a_2;a_3,a_4)$ \label{fig:planar cross-ratio}}.
\end{figure}

\begin{lem}\label{lem: conic formula}\cite{SturmfelsInvariant}*{Example 3.4.3}
A collection of $6$ points $p_1,\ldots,p_6 \in \PP^2$ lie on a conic if and only if 

\begin{equation} \label{eq:conic condition}
[135][245][146][236]=[136][246][145][235], 
\end{equation}
or generically, the following cross-ratio equality holds: 
\begin{equation}
(p_1,p_2;p_3,p_4;p_5)=(p_1,p_2;p_3,p_4;p_6).
\end{equation}
\end{lem}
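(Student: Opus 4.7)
My plan is to establish the cross-ratio equality first, via the pencil of conics through four fixed points, and then deduce the bracket equation by clearing denominators and applying a Zariski closure argument.

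First, I would rewrite \eqref{eq:conic condition} on a dense open set as the equality of ratios
$$
\frac{[135][245]}{[145][235]}=\frac{[136][246]}{[146][236]},
$$
which by \eqref{eq:5 point cross-ratio} is precisely the cross-ratio equality $(p_1,p_2;p_3,p_4;p_5)=(p_1,p_2;p_3,p_4;p_6)$. This reformulation is valid wherever the four denominator brackets are nonzero.

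Next, for $p_1,p_2,p_3,p_4$ in general position I would analyze the rational map
$$
\phi:\PP^2\dashrightarrow\PP^1,\qquad \phi(p)=\bigl([13p][24p]:[14p][23p]\bigr).
$$
Each component is a quadratic polynomial in the coordinates of $p$, so for $(\lambda:\mu)\in\PP^1$ the fiber $\phi^{-1}(\lambda:\mu)$ is the conic $\mu[13p][24p]-\lambda[14p][23p]=0$. Each such conic contains $p_1,p_2,p_3,p_4$ because each bracket factor vanishes at two of them, and a quick check shows the base locus of $\phi$ is exactly $\{p_1,p_2,p_3,p_4\}$. Since the pencil of conics through four general points is a $\PP^1$, the fibers of $\phi$ are precisely this pencil. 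Hence $\phi(p_5)=\phi(p_6)$, equivalently the cross-ratio equality, holds if and only if $p_5$ and $p_6$ lie on a common conic of the pencil, if and only if $p_1,\ldots,p_6$ all lie on a common conic.

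Finally, to pass from the open cross-ratio statement to the full bracket statement, I would use Zariski closure. The variety $V\subset(\PP^2)^6$ of 6-tuples on a common conic is irreducible, being the image under projection of the incidence correspondence $\{(p_1,\ldots,p_6,C):p_i\in C\}$, a $\PP^1$-bundle over the $\PP^5$ of plane conics iterated six times; a dimension count gives $\operatorname{codim} V = 1$. The polynomial $[135][245][146][236]-[136][246][145][235]$ is multihomogeneous of bidegree $(2,2,2,2,2,2)$ and cuts out a codimension-one subvariety which agrees with $V$ on the open set analyzed above, so the two coincide.

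\textbf{Main obstacle.} The trickiest part will be controlling degenerate configurations where the pencil argument breaks down, for instance when three of $p_1,\ldots,p_4$ are collinear, or when the conic through the six points is reducible. These cases are swept up by the Zariski closure step, provided the bracket polynomial has no extraneous components. A complementary Macaulay2 verification of the primality of this polynomial handles this cleanly in the spirit of the paper's computational strategy.
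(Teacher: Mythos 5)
The paper states Lemma~\ref{lem: conic formula} as a classical fact and offers no proof, so your argument is necessarily a different route: you actually prove it. Your pencil-of-conics argument is correct and is essentially the standard classical derivation. The map $\phi(p)=\bigl([13p][24p]:[14p][23p]\bigr)$ does have the pencil through $p_1,\ldots,p_4$ as its fibers, $\phi(p_5)=\phi(p_6)$ is exactly the vanishing of the $2\times 2$ determinant, i.e.\ \eqref{eq:conic condition}, and on the open set where $p_1,\ldots,p_4$ are in general position and $p_5,p_6$ avoid the base locus this is equivalent to the six points lying on a member of the pencil. The one place that deserves more care is the closure step. Your argument as written yields $V=\overline{W\cap U}\subseteq W$ (where $W$ is the bracket hypersurface and $V$ the conic locus), but the reverse inclusion $W\subseteq V$ requires that $W$ have no component contained in the complement of $U$ --- equivalently, that the bidegree-$(2,\ldots,2)$ bracket polynomial is irreducible. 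You correctly flag this and defer it to a Macaulay2 check, which is in the spirit of the paper; a cleaner alternative that avoids the computation entirely is to use the classical identity expressing the left-minus-right side of \eqref{eq:conic condition} as a scalar multiple of the $6\times 6$ determinant $\det\bigl(v_2(p_1),\ldots,v_2(p_6)\bigr)$ of quadratic Veronese images, whose vanishing is by definition equivalent to the existence of a (possibly degenerate) conic through all six points, with no genericity hypotheses at all. (Also note implicitly your irreducibility claim for $V$ uses that six points on a degenerate conic are limits of six points on smooth conics, which is true but worth a sentence; and the lemma must be read with ``conic'' including degenerate conics, since six collinear points satisfy \eqref{eq:conic condition} trivially.) What your route buys is a transparent geometric explanation of why the specific bracket monomials appear --- they are the two degenerate conics spanning the pencil --- which the paper's bare citation does not provide.
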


Lemma~\ref{lem: conic formula} makes the following definition of a conic cross-ratio well-defined. 

\begin{defn}\label{def:conic cross-ratio}
The {\bf conic cross-ratio}
of $4$ points $p_1,\ldots,p_4$ on a (non-degenerate) conic $\omega$  is
\begin{equation}
(p_1,p_2;p_3,p_4)_\omega:=(p_1,p_2;p_3,p_4;p)
\end{equation}
where $p\in \omega$ is any new point.
\end{defn}

Any non-degenerate conic $\omega$ is isomorphic to $\PP^1$. If $\iota:\omega\to\PP^1$ is one such isomorphism then
\begin{equation}
(p_1,p_2;p_3,p_4)_\omega=(\iota(p_1),\iota(p_2);\iota(p_3),\iota(p_4)).
\end{equation}

\section{$k \leq 4$}
\label{sec:k less than equal 4}
We will now answer Problem~\ref{prob:main} in the cases $k \leq 4$. As a warm up to Problem~\ref{prob:main} we first consider the analogous question in $\PP^1 \times \PP^1$.
Recall that any condition on a proper subset of $\{(x_i,y_i)\}$ that make the 
corresponding rows of $Z_k$ dependent is called an inherited condition for the rank drop 
of $Z_k$. 

\begin{thm} \label{thm:P1xP1}
Consider the $k \times 4$ matrix $Z_k = (x_i \otimes y_i)_{i=1}^k$ where $(x_i,y_i) \in \PP^1\times\PP^1$ and $k \leq 4$. Then, 
\begin{enumerate}
\item $\rank(Z_2) < 2$ if and only if $x_1=x_2$ and $y_1=y_2$,
\item $\rank(Z_3) < 3$ if and only if either an inherited condition holds or $x_1 = x_2 = x_3$ or $y_1 = y_2 = y_3$, 
\item $\rank(Z_4) < 4$ if and only if the following bracket equation holds: 
    \begin{equation}\label{eq:detZ}
[13]_x[24]_x[14]_y[23]_y=[14]_x[23]_x[13]_y[24]_y.
    \end{equation}
Generically, $x_1, \ldots, x_4$ and $y_1, \ldots, y_4$ are distinct and then \eqref{eq:detZ} holds if and only if there is a homography $H\in \textup{PGL}(2)$ such that $Hx_i=y_i$ for $i=1,\ldots,4$.
\end{enumerate}
\end{thm}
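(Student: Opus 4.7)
The plan is to view each row $x_i^\top \otimes y_i^\top$ of $Z_k$ as the image of $(x_i, y_i) \in \PP^1 \times \PP^1$ under the Segre embedding into the smooth quadric $\Sigma \subset \PP^3$. Under this identification, $\rank(Z_k) < k$ means that the $k$ Segre points fail to span a $(k{-}1)$-plane in $\PP^3$. Part (1) is then immediate: two rank-one tensors $x_1 \otimes y_1$ and $x_2 \otimes y_2$ are proportional iff their tensor factorizations agree projectively, i.e., $x_1 = x_2$ and $y_1 = y_2$ in $\PP^1$.

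For Part (2), assuming no inherited condition, I would argue that collinearity of three points on $\Sigma$ in $\PP^3$ forces all $x_i$ or all $y_i$ to coincide. The key fact is that any projective line meeting a smooth quadric in three points must lie on the quadric; the lines on $\Sigma = \PP^1 \times \PP^1$ are precisely the two families of rulings $\{x\} \times \PP^1$ and $\PP^1 \times \{y\}$, and these correspond exactly to the two stated alternatives.

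For Part (3), I would combine a conceptual argument with a direct computation. Conceptually, rank drop of $Z_4$ means the four Segre points lie on a common hyperplane $H \subset \PP^3$, and $H \cap \Sigma$ is the zero set of some nonzero bilinear form $F(x, y) = x^\top A y$ with $A \in \CC^{2 \times 2}$. When $A$ is invertible, the equation $F = 0$ cuts out the graph of a homography $\PP^1 \to \PP^1$ sending $x_i \mapsto y_i$; by Lemma~\ref{lem:cr=homography} this is equivalent to equality of the two cross-ratios, and clearing denominators in the cross-ratio equality yields \eqref{eq:detZ}. To obtain the unconditional statement, I would compute $\det Z_4$ directly---dehomogenizing $x_i = (a_i, 1)$, $y_i = (b_i, 1)$ and performing standard row reductions---to establish the polynomial identity
\begin{equation*}
\det Z_4 \;=\; \pm \bigl([14]_x [23]_x [13]_y [24]_y - [13]_x [24]_x [14]_y [23]_y\bigr),
\end{equation*}
which then extends to all inputs in $\PP^1 \times \PP^1$ by bihomogeneity.

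The main obstacle is matching the geometric and algebraic pictures at the boundary. When $A$ has rank one, the curve $V(F) \subset \Sigma$ degenerates into the union of a left ruling and a right ruling, and the four pairs $(x_i, y_i)$ may split between the two rulings (say two sharing a common $x$-coordinate and two sharing a common $y$-coordinate) without any triple triggering an inherited condition from Part (2). Such configurations can have coincident $x$'s or $y$'s, so the cross-ratios become undefined (cf.\ Example~\ref{ex:brackets better}), yet the bracket identity \eqref{eq:detZ} continues to detect the rank drop. The direct determinantal computation is therefore essential for the universal "if and only if,'' while the homography picture provides the intrinsic geometric content of the equation.
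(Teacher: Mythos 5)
Your argument is correct, and it takes a genuinely different route from the paper. The paper's proof is computational throughout: the ideal of maximal minors of the symbolic $Z_k$ is computed in Macaulay2 and decomposed into prime components, which are then read off as the geometric conditions; for $k=4$ the single generator $\det(Z_4)$ is matched to the bracket expression by machine. You instead work synthetically on the Segre quadric $\Sigma\subset\PP^3$: part (1) is the uniqueness of rank-one factorizations, part (2) follows from the fact that a line meeting a smooth quadric in three distinct points lies on it and that the lines of $\Sigma$ are exactly the two rulings, and part (3) identifies a hyperplane section $x^\top A y=0$ with the graph of the homography $x\mapsto JAx$ when $A$ is invertible, reducing to Lemma~\ref{lem:cr=homography}. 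This is sound, including the reduction of the inherited/degenerate cases to coincident Segre points and to rank-one $A$. What your approach buys is a computer-free, conceptual explanation of \emph{why} the conditions are rulings and homographies, and it transfers cleanly to the $\PP^2$ analogue in spirit; what the paper's approach buys is a certificate that the listed components are \emph{all} the components (the primary decomposition is exhaustive by construction), which in your argument is supplied instead by the case analysis on collinearity and on $\operatorname{rank}(A)$. The one place where you and the paper are on equal footing is the identity $\det Z_4=\pm\bigl([14]_x[23]_x[13]_y[24]_y-[13]_x[24]_x[14]_y[23]_y\bigr)$: you propose to verify it by hand in the affine chart and extend by multihomogeneity (which is legitimate, since both sides have the same multidegree), while the paper delegates it to Macaulay2; either way this identity, not the homography picture, is what carries the unconditional ``if and only if'' in (3).
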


\begin{proof} In each of the above cases, $Z_k$ will be 
rank deficient if and only if all its maximal minors vanish.
Hence we compute the ideal of $k \times k$ minors of the symbolic $Z_k$ in each case and decompose it to understand rank deficiency. While computations in Macaulay2 are done over $\QQ$, our results hold over $\CC$ as explained at the end of the Introduction.
By Corollary~\ref{cor:assume last coordinate is 1} we can fix the last coordinates of $x_i$ and $y_i$ to $1$.

If $k=2$ the ideal of $2 \times 2$ minors of $Z_2$ is the prime ideal 
$\langle x_{11}-x_{21}, y_{11}-y_{21}\rangle$
which says that rank drops if and only if 
$x_1 = x_2$ and $y_1 = y_2$. We will call this the {\em repeated point condition}.

If $k=3$, the ideal of $3 \times 3$ minors of the symbolic $Z_3$ matrix is radical and decomposes into $5$ prime ideals. 
Three of them correspond to repeated points 
$x_i = x_j$ and $y_i=y_j$ for $i \neq j$. These are inherited for rank drop from the $k=2$ case. 
The remaining $2$ prime ideals say that $x_1 = x_2 = x_3$ and 
$y_1 = y_2 = y_3$ respectively.

When $k=4$, we can use Macaulay2 to check that
\begin{equation*}
\det(Z_4)=[14]_x[23]_x[13]_y[24]_y-[13]_x[24]_x[14]_y[23]_y 
\end{equation*}
which yields \eqref{eq:detZ}. 
In this case, the ideal of maximal minors is principal and prime, generated by $\det(Z_4)$, and does not decompose. 
However, note that the bracket expression for $\det(Z_4)$ vanishes under all of the conditions inherited from $k=2$ and $k=3$. Furthermore, when the $x_i$'s and $y_i$'s are distinct then \eqref{eq:detZ} is equivalent to $(x_1,x_2;x_3,x_4)=(y_1,y_2;y_3,y_4)$ and the homography follows by Lemma \ref{lem:cr=homography}.
\end{proof}

We now consider Problem~\ref{prob:main} for $k \leq 4$ points $(x_i, y_i) \in \PP^2 \times \PP^2$. Again all of the statements can be checked in Macaulay2 by following the same general strategy as in the proof of Theorem~\ref{thm:P1xP1}. 

\begin{thm} \label{thm:k=2,3,4}
Suppose $(x_i,y_i) \in \PP^2 \times \PP^2$ and 
$Z_k = (x_i^\top \otimes y_i^\top)_{i=1}^k$ for $k \leq 4$. Then
\begin{enumerate}
\item $\rank(Z_2) < 2$ if and only if $x_1=x_2$ and $y_1 = y_2$.
\item $\rank(Z_3) <3$ if and only if either an inherited condition holds or 
 $x_1=x_2=x_3$ 
 and  $y_1, y_2,y_3$ are on a line, or  
$x_1,x_2,x_3$ are on a line and 
    $y_1 = y_2 = y_3$. 
\item $\rank(Z_4) < 4$ if and only if 
an inherited condition holds or 
one of the following is true:
\begin{enumerate}
\item $x_1 =x_2=x_3=x_4$ or $y_1 = y_2 = y_3 = y_4$.
\item The points $x_1, \ldots, x_4 \in \PP^2_x$ are in a line and 
$y_1, \ldots, y_4 \in \PP^2_y$ are in a line,  and
\begin{equation} \label{eq:k=4 bracket equation}
[13]_x[24]_x[14]_y[23]_y=[14]_x[23]_x[13]_y[24]_y.
\end{equation}
Generically, the collinear points $x_1, \ldots, x_4$ and $y_1, \ldots, y_4$ are distinct and then  \eqref{eq:k=4 bracket equation} holds if and only if there is a homography $H\in \textup{PGL}(3)$ such that $Hx_i=y_i$ for $i=1,\ldots,4$.
\end{enumerate}    
    \end{enumerate}
\end{thm}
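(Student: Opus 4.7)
The plan is to follow the Macaulay2 computational strategy used in the proof of Theorem~\ref{thm:P1xP1}. For each $k \in \{2,3,4\}$, I would first apply Corollary~\ref{cor:assume last coordinate is 1} to set $x_i = (x_{i1}, x_{i2}, 1)^\top$ and $y_i = (y_{i1}, y_{i2}, 1)^\top$, form the ideal $I_k$ generated by the $k \times k$ minors of the resulting symbolic $Z_k$ in $\QQ[x_{ij},y_{ij}]$, and compute its minimal primes. Each associated prime corresponds either to an inherited condition (identical, after relabelling rows, to a prime appearing at a smaller $k$) or to a genuinely new locus that must then be interpreted geometrically.

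For $k=2$, rank drop means the two rows $x_i^\top \otimes y_i^\top$ are proportional; since each is a rank-one tensor, proportionality forces $x_1 \parallel x_2$ and $y_1 \parallel y_2$, which in $\PP^2$ means $x_1 = x_2$ and $y_1 = y_2$. For $k=3$, the minimal primes over $\QQ$ should split as three inherited primes (one for each pair of coincident point pairs) and two symmetric new primes. To interpret the new primes: if $x_1 = x_2 = x_3 = x$, then each row of $Z_3$ belongs to the three-dimensional subspace $x^\top \otimes \CC^3$, so $\rank(Z_3)$ equals the rank of the $3 \times 3$ matrix with rows $y_i^\top$, which drops below $3$ precisely when the $y_i$'s are collinear; the mirror situation gives the other new prime.

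For $k=4$, the decomposition should yield the inherited primes together with two genuinely new families. The first corresponds to $x_1=\cdots=x_4$ or $y_1=\cdots=y_4$, by the same three-dimensional subspace argument as in the $k=3$ analysis. The second, and most interesting, is the locus where both $\{x_i\}$ and $\{y_i\}$ are collinear. To handle this case I would exploit Lemma~\ref{lem:rank drop is invariant under homographies} to move each line to $\{(\ast,\ast,0)\}$ in its respective $\PP^2$, after which $Z_4$ has nonzero entries only in columns $1,2,4,5$. The resulting $4\times 4$ submatrix is exactly the $Z_4$ of Theorem~\ref{thm:P1xP1} with the zero coordinates dropped, so its determinant produces the $\PP^1$-bracket equation of that theorem; Lemma~\ref{lem:simplifyng 3x3 brackets - line} then lifts this identity to the $\PP^2$-bracket form \eqref{eq:k=4 bracket equation}, and the cross-ratio/homography reformulation transfers verbatim.

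The main obstacle is computational rather than conceptual: one must verify that the primary decomposition of $I_k$ contains exactly the enumerated primes with no unexpected components and no embedded primes, and that each $\QQ$-prime remains prime over $\CC$. The latter is clear here because each listed component is cut out either by vanishing of natural affine coordinate differences or by bracket polynomials of collinear points, both of which are geometric conditions that cannot refine upon extending scalars. Confirming the bookkeeping of the Macaulay2 output against the enumerated inherited primes (there are many once one tracks all subsets of rows) is the only delicate point, and this is exactly the kind of verification that a symbolic algebra package is designed to do.
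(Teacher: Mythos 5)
Your proposal follows essentially the same route as the paper: decompose the ideal of maximal minors of the symbolic $Z_k$ in Macaulay2, identify the inherited versus new primes, and handle the doubly-collinear $k=4$ component by normalizing both lines so that $Z_4$ collapses to the $4\times 4$ matrix of Theorem~\ref{thm:P1xP1}, whose determinant gives the bracket equation \eqref{eq:k=4 bracket equation}. Your direct interpretations of the new components (e.g., that $x_1=x_2=x_3=x$ confines the rows to the $3$-dimensional subspace $x^\top\otimes\CC^3$, reducing the question to collinearity of the $y_i$'s) are correct and make the computational claims more transparent, but the underlying argument is the same.
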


\begin{proof}
The proof of (1) and (2) can be done using Macaulay2 by computing the ideal of maximal minors of $Z_k$ and then decomposing the radical of the ideal to get all possible conditions for rank drop. The matrix $Z_k$ is written symbolically as in \eqref{eq:rowZ} where, by Corollary \ref{cor:assume last coordinate is 1}  we assume that the last coordinate of all $x_i$'s and $y_i$'s are $1$. 
For instance, the ideal of 
$3 \times 3$ minors of $Z_3$ is radical and of dimension $8$ and degree $3$. It is the intersection of $5$ prime ideals of which $3$ correspond to the repeated point conditions inherited from $k=2$. The fourth ideal provides the new condition that $x_1 = x_2 = x_3$ and $y_1,y_2,y_3$ are in a line. The fifth ideal provides the analogous condition with the roles of $x$ and $y$ switched. These ideals appear in the form:
$$ \langle x_{11}-x_{21}, x_{12}-x_{22}, x_{11}-x_{31}, x_{12}-x_{32}, [123]_y \rangle \quad \textup{ and } \quad \langle y_{11}-y_{21}, y_{12}-y_{22}, y_{11}-y_{31}, y_{12}-y_{32}, [123]_x \rangle.$$

The ideal of maximal minors of $Z_4$ is radical of dimension $12$ and degree $6$. 
It decomposes into $17$ primes ideals:
\begin{itemize}
\item $6$ of the primes correspond to repeated points (inherited from $k=2$). All of these have dimension $12$ 
and degree $1$.
\item $8$ of the primes correspond to $3$ points in one side coinciding and the corresponding points on the other side on a line (inherited from $k=3$). All of these ideals 
have dimension $11$ and degree $2$.
\item $2$ of the primes say $x_1=x_2=x_3=x_4$ and 
$y_1 = y_2 = y_3 = y_4$ respectively (new conditions for $k=4$). These ideals have dimension $10$ and degree $1$.
\item The last prime component, call it $J$, has dimension $11$ and degree $24$. It implies that the $x_i$ and $y_i$ are in a line (new condition for $k=4$). One way to verify this claim is to 
check that the ideal generated by the brackets $[123]_x, [124]_x$ and $[123]_y,[124]_y$ is contained in $J$. These brackets being $0$ encode the collinearity of $x_i$'s and the collinearity of $y_i$'s. 
\end{itemize}
One can understand $J$ more precisely by making the following calculation. If the $x$ and $y$ points are both in a line, then we can use homographies to fix these lines, and 
assume that  $x_i=(a_i,0,1)$ and $y_i=(b_i,0,1)$ for $i=1,\ldots,4$. This means that we can identify 
$x_i$ with $(a_i,1) \in \PP^1$ and $y_i$ with 
$(b_i,1) \in \PP^1$. Under this substitution, $Z_4$ has exactly one non-zero  minor
\begin{equation} \label{eq:det M} 
    \det(M)=\det\begin{bmatrix}
    a_1b_1 & a_1 & b_1 & 1\\
    a_2b_2 & a_2 & b_2 & 1\\
    a_3b_3 & a_3 & b_3 & 1\\
    a_4b_4 & a_4 & b_4 & 1\end{bmatrix} = [14]_x[23]_x[13]_y[24]_y-[13]_x[24]_x[14]_y[23]_y.
\end{equation}
 The second equality in 
\eqref{eq:det M} can be checked in Macaulay2.  The bracket expression set to $0$ is the equality of cross-ratios: 
\begin{align*}
(x_1,x_2;x_3,x_4) = (y_1,y_2;y_3,y_4)
\end{align*}
 which becomes well-defined when the $x_i$'s are distinct and the $y_i$'s are distinct. 
\end{proof}

\section{$k=5$} \label{sec:k=5}
We now consider the rank deficiency of $Z_5$. In this case, we present two theorems. \Cref{thm:k=5} is a characterization of rank deficiency in terms of brackets. 
We will see that a necessary condition for rank drop is that one set of points (say $\{y_i\}$) is on a line $\ell$. 
\Cref{thm: k=5 geometry theorem} gives a geometric characterization of rank deficiency when the points are sufficiently generic, i.e.,  when the $y_i$'s on the line $\ell$ are distinct and the $x_i$'s are in general position. In this case the rank drop of $Z_5$ is equivalent to the existence of an isomorphism from the conic $\omega$ through the $x_i$'s to the line $\ell$ taking $x_i$ to $y_i$. This is analogous to Theorem~\ref{thm:k=2,3,4} (3)(b) where there is a 
homography taking $x_i$ to $y_i$ under sufficient genericity.
We will illustrate both results using Example~\ref{ex:k=5}.

\begin{thm} \label{thm:k=5}
Given a configuration $\{(x_i,y_i)\}_{i=1}^5$ in $\PP^2 \times \PP^2$, the matrix $Z_5$ is rank deficient if and only if either, one of the inherited conditions hold, or the points in one $\PP^2$ (say in $\PP^2_y$) are on a line and the following bracket equations hold for all distinct $i_1,i_2,i_3,i_4,j\in\{1,2,3,4,5\}$:
\begin{equation}\label{eq: k=5 brackets}
[i_1i_3j]_x[i_2i_4j]_x[i_1i_4]_y[i_2i_3]_y
=[i_1i_4j]_x[i_2i_3j]_x[i_1i_3]_y[i_2i_4]_y.
\end{equation}
By the action of permutations, there are only $5$ equations to consider in 
\eqref{eq: k=5 brackets}, one for each choice of $j\in\{1,2,3,4,5\}$. In the case that both the $x_i$'s and the $y_i$'s are contained in lines then \eqref{eq: k=5 brackets} holds automatically.
Generically, \eqref{eq: k=5 brackets} is equivalent to 
the cross-ratio equalities: 
\begin{equation}\label{eq: k=5 cross-ratio equalities}
(x_{i_1},x_{i_2};x_{i_3},x_{i_4};x_j)
=(y_{i_1},y_{i_2};y_{i_3},y_{i_4})\end{equation}
for all distinct $i_1,i_2,i_3,i_4,j\in\{1,2,3,4,5\}$.
\end{thm}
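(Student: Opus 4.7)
The plan is to mirror the computational strategy used for Theorem~\ref{thm:k=2,3,4}. After fixing $x_i = (x_{i1}, x_{i2}, 1)$ and $y_i = (y_{i1}, y_{i2}, 1)$ via Corollary~\ref{cor:assume last coordinate is 1}, I compute the ideal $I_5$ of $5 \times 5$ minors of the symbolic $Z_5$ in Macaulay2 and decompose its radical into associated primes over $\QQ$; as noted at the end of the Introduction, the resulting geometric conclusions transfer to $\CC$.

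I expect the primes of $\sqrt{I_5}$ to split into two classes. The first class consists of \emph{inherited} primes, each containing one of the ideals listed in Theorem~\ref{thm:k=2,3,4} and detectable by explicit containment checks. The second class consists of two new symmetric components, each forcing the points on one side to be collinear. Focusing on the component that places $\{y_1,\ldots,y_5\}$ on a line $\ell$, the task reduces to verifying that this prime equals the ideal generated by the collinearity conditions $\{[ijk]_y\}_{i,j,k}$ together with the bracket equations \eqref{eq: k=5 brackets}.

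To see why \eqref{eq: k=5 brackets} emerges directly, I use Lemma~\ref{lem:rank drop is invariant under homographies} to send $\ell$ to the line $\{y_{i2}=0\}$ and write $y_i = (b_i, 0, 1)$. Columns $2, 5, 8$ of $Z_5$ then vanish identically, so $Z_5$ collapses to a $5 \times 6$ matrix whose six $5 \times 5$ minors I compute in Macaulay2. Each minor, after re-expression using Lemma~\ref{lem:simplifyng 3x3 brackets - line} for the $y$-brackets and direct substitution for the $x$-brackets, becomes precisely the left-minus-right side of \eqref{eq: k=5 brackets} indexed by the row $j$ that was omitted. The $S_4$ action on $(i_1,i_2,i_3,i_4)$ shuffles each equation within its own orbit up to sign, leaving only five independent bracket equations, one per $j \in \{1,\ldots,5\}$. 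If both sides are collinear, every $[i_1i_3j]_x$ vanishes because its three arguments lie on a common line, so \eqref{eq: k=5 brackets} is automatically satisfied. For the cross-ratio reformulation, in general position the product $[i_1i_4j]_x[i_2i_3j]_x[i_1i_3]_y[i_2i_4]_y$ is nonzero; dividing \eqref{eq: k=5 brackets} by it recovers \eqref{eq: k=5 cross-ratio equalities} directly from the definitions \eqref{eq:5 point cross-ratio} and \eqref{eq:cross-ratio}.

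The main obstacle is the matching step of the second paragraph: Macaulay2 returns each new prime in raw generator form, and showing it \emph{equals} the ideal generated by $\{[ijk]_y\}$ and \eqref{eq: k=5 brackets} requires either a direct bidirectional containment check or a dimension-and-degree comparison combined with a primality verification of the geometric ideal. This is where most of the delicate bookkeeping lives; once the normalization $y_i = (b_i,0,1)$ is in place, the algebraic identity underlying the bracket equations is essentially the $k=4$ determinant \eqref{eq:det M} applied to each quadruple of rows of the reduced $5 \times 6$ matrix.
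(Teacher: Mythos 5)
Your overall strategy --- reduce to Macaulay2 computations, normalize one side onto a line so that $Z_5$ collapses to a $5\times 6$ matrix, and match its maximal minors against the bracket equations --- is the same in spirit as the paper's, but two of your concrete steps do not survive scrutiny.

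First, the paper does not decompose the ideal of $5\times 5$ minors of the fully symbolic $Z_5$; that computation (126 minors in 20 variables) is what the case analysis is designed to avoid. The paper instead argues in two cases: when neither the $x_i$'s nor the $y_i$'s are collinear, it fixes three point pairs to the standard basis via homographies (using \emph{two} different fixings, \eqref{eq:k=5 fix1} and \eqref{eq:k=5 fix2}, because if $x_1,x_2,x_3$ are independent the corresponding $y$'s may still be collinear) and checks that every prime component of the minor ideal is either an invalid input or an inherited condition --- so no new rank-drop locus exists off the ``one side collinear'' stratum. Only then does it pass to the case $y_i=(m_i,0,1)$. Your single global decomposition would, if it ran, subsume this, but you should not assume it is feasible; the paper explicitly flags the fixings as necessary for the computation to terminate.

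Second, your claim that each $5\times 5$ minor of the reduced $5\times 6$ matrix ``becomes precisely the left-minus-right side of \eqref{eq: k=5 brackets}'' cannot be correct as stated. There are six maximal minors (one per deleted \emph{column} --- not row, as you wrote) but only five bracket equations, and their degrees do not match: with rows $(x_{i1}m_i,\,x_{i2}m_i,\,x_{i1},\,x_{i2},\,m_i,\,1)$, the minor omitting the column of $1$'s has degree $4$ in the $x$-variables and $3$ in the $m$-variables, whereas each bracket difference in \eqref{eq: k=5 brackets} has bidegree $(4,2)$; only the minor omitting the $m_i$ column has the right bidegree. So there is no term-by-term identity analogous to \eqref{eq:det M}. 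The correct (and weaker) statement, which the paper verifies in Macaulay2, is the ideal-theoretic equality $I=\sqrt{J}$, where $I$ is generated by the six nonzero minors and $J$ by the five bracket differences. Your proof needs to be rephrased to establish this containment in both directions computationally rather than asserting a minor-by-minor identification. The remaining points of your proposal --- that \eqref{eq: k=5 brackets} holds automatically when both sides are collinear, and that division by the nonvanishing right-hand side yields the cross-ratio form generically --- are fine.
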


\begin{proof}
The proof splits into 
two cases. For the first case, we assume that the points in neither side are entirely in a line and show that rank will drop only under inherited conditions. For the second case, we are then free to assume that one side is contained in a line, which allows us to simplify the Macaulay2 calculations.
\begin{enumerate}
    \item Suppose neither the $x_i$'s nor the $y_i$'s are in a line, and none of the inherited conditions hold. Then we can assume without loss of generality that $x_1 \neq x_2$ and $y_1 \neq y_2$ because no side has four coincident points (due to the absence of inherited conditions).
    Since neither side is contained entirely in a line, we can also assume that $x_1,x_2,x_3$ are not collinear. 
    Then either $y_1,y_2,y_3$ are also not collinear, or $y_1,y_2,y_3$ are in a line  which means that we can assume $y_1,y_2,y_4$ are 
    not collinear. Applying homographies, we therefore need to work with two different 
    fixings of points:
\begin{equation}\label{eq:k=5 fix1}
    x_1=(1,0,0)=y_1\quad\quad
    \textup{ and } \quad\quad x_2=(0,1,0)=y_2\quad\quad \textup{ and } \quad\quad x_3=(0,0,1)=y_3
\end{equation} 
and 
\begin{equation}\label{eq:k=5 fix2}
    x_1=(1,0,0)=y_1\quad\quad
    \textup{ and } \quad\quad x_2=(0,1,0)=y_2\quad\quad \textup{ and } \quad\quad x_3=(0,0,1)=y_4.
\end{equation} 
Note that these fixings are not to finite points as we have done so far, but are equivalent 
to the fixing in \eqref{eq:finite fixing} as remarked after Corollary~\ref{cor:assume last coordinate is 1}. They allow for faster computations in Macaulay2.

In \eqref{eq:k=5 fix1}, the ideal generated by the $5\times 5$ minors of $Z_5$ is the intersection of $20$ prime ideals. Of these, $4$ correspond to invalid inputs $x_i,y_i=(0,0,0)$; $7$ correspond to the inherited repeated point condition from Theorem~\ref{thm:k=2,3,4}(1); $6$ correspond to the inherited condition from Theorem~\ref{thm:k=2,3,4}(2); and the remaining $3$ correspond to the inherited bracket/cross-ratio condition from Theorem~\ref{thm:k=2,3,4}(3)(b). 

In \eqref{eq:k=5 fix2}, the ideal generated by the $5\times5$ minors of $Z_5$ is the intersection of $17$ prime ideals. Of these, $4$ correspond to invalid inputs $x_i,y_i=(0,0,0)$; $7$ correspond to repeated points; $4$ correspond to the inherited condition from Theorem~\ref{thm:k=2,3,4}(2); and the remaining $2$ correspond to the inherited bracket/cross-ratio condition from Theorem~\ref{thm:k=2,3,4}(3)(b). 

Thus, there are no new conditions for rank drop if neither the $x_i$'s nor the $y_i$'s are 
in a line. 

\item Suppose $y_1, \ldots, y_5$ are on a line. Applying
homographies, we may assume that  $y_i=(m_i,0,1)$ for $i=1,\ldots,5$ and all $x_i$ are finite. We may further assume that $x_1=(0,0,1)$ and that $y_1=(0,0,1)$. Then rearranging the columns of $Z_5$ we 
get 
\begin{equation}
   Z'_5 =  \begin{bmatrix} 
    0 & 0 & 0 & 0 & 0 & 1 & 0 & 0 & 0\\
    x_{21}m_2 & x_{22}m_2 & x_{21} & x_{22} & m_2 & 1 & 0 & 0 & 0\\
    x_{31}m_3 & x_{32}m_3 & x_{31} & x_{32} & m_3 & 1 & 0 & 0 & 0\\
    x_{41}m_4 & x_{42}m_4 & x_{41} & x_{42} & m_4 & 1 & 0 & 0 & 0\\
    x_{51}m_5 & x_{52}m_5 & x_{51} & x_{52} & m_5 & 1 & 0 & 0 & 0
    \end{bmatrix}.
\end{equation}
Let $I$ be the ideal generated by the $6$ non-zero $5 \times 5$ minors of $Z'_5$ and let $J$ be the ideal generated by the $5$ differences of left and right hand sides in the bracket equations \eqref{eq: k=5 brackets} under the same fixings of points. Then 
we can verify using Macaulay2 that $I=\sqrt{J}$. A symmetric argument will switch the roles of the $x$ and $y$ points. 
\end{enumerate}
\end{proof}

\begin{ex}\label{ex:k=5}
The following example illustrates Theorem~\ref{thm:k=5} and motivates Theorem $\ref{thm: k=5 geometry theorem}$.
Consider 
\begin{align*}
x_1=(0,0,1)\quad& y_1=(0,0,1)  \quad\quad&x_4=(1,1,1)\quad&y_4=(-4,0,1)\\ 
x_2=(1,0,1)\quad&y_2=(1,0,1) \quad\quad&x_5=(50,98,113)\quad& y_5=(8,0,1)\\
x_3=(0,1,1)\quad& y_3=(3,0,1)
\end{align*}
Here the $y_i$'s are on the line $y_i=(m_i,0,1)$ and we can verify that \eqref{eq: k=5 brackets} is satisfied. The matrix $Z_5$ shown below, has rank $4$.
$$Z_5=\begin{bmatrix}
0 & 0 & 0 & 0 & 0 & 0 & 0 & 0 & 1\\
1 & 0 & 1 & 0 & 0 & 0 & 1 & 0 & 1\\
0 & 0 & 0 & 3 & 0 & 1 & 3 & 0 & 1\\
-4 & 0 & 1 & -4 & 0 & 1 & -4 & 0 & 1\\
400 & 0 & 50 & 784 & 0 & 98 & 904 & 0 & 113
\end{bmatrix}.$$

Note that in this example, the $y_i$'s (which are on a line) are distinct, and that no $3$ of the 
$x_i$'s are in a line. 
In this case, there is a $1$-parameter family of rank $2$ projective transformations $\{T_\lambda\}$ such that $T_\lambda (x_i)=y_i$ for $i=1,\ldots,5$ except for $5$ unique values of $\lambda$, one for each of $x_1, \ldots, x_5$. In this example, this is the family 
\begin{equation}
T_\lambda=\begin{bmatrix}
0 & 12 & 0\\
-7 & -3 & 7
\end{bmatrix}+\lambda\begin{bmatrix}
7 & -15 & 0\\
7 & -5 & 0
\end{bmatrix}.
\end{equation}
For each $i=1,\ldots,5$ there exists unique $\lambda_i$ such that $T_{\lambda_i}$ has nullvector (center) $x_i$. 
These are
\begin{equation}
\lambda_1=\infty\quad \lambda_2=0\quad\lambda_3=\frac{4}{5}\quad\lambda_4=\frac{3}{2}\quad\lambda_5=\frac{21}{20}.
\end{equation}
Since $T_{\lambda_i} (x_i) =(0,0,0)\not\sim y_i$, these $5$ transformations can be seen as degenerate members of the family.
\end{ex}

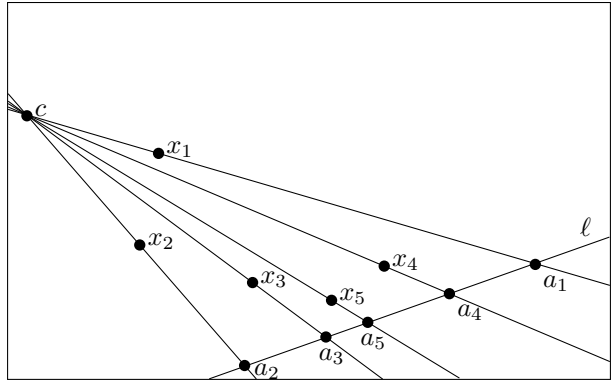
\begin{figure}
\fbox{%
\begin{picture}(8,5)
\put(2,3){{\circle*{0.15}}\hbox{\kern3pt \texttt{$x_1$}}}
\put(1.75,1.78125){{\circle*{0.15}}\hbox{\kern3pt \texttt{$x_2$}}}
\put(3.25,1.28125){{\circle*{0.15}}\hbox{\kern3pt \texttt{$x_3$}}}
\put(5,1.5){{\circle*{0.15}}\hbox{\kern3pt \texttt{$x_4$}}}
\put(4.3,1.045){{\circle*{0.15}}\hbox{\kern3pt \texttt{$x_5$}}}

\put(0.25,3.5){{\circle*{0.15}}\hbox{\kern3pt \texttt{$c$}}}

\put(7.0088,1.52278){{\circle*{0.15}}}
\put(7.0088,1.22278){\hbox{\kern3pt \texttt{$a_1$}}}
\put(3.144,0.17768){{\circle*{0.15}}}
\put(3.164,0.05){\hbox{\kern3pt \texttt{$a_2$}}}
\put(4.225,0.557){{\circle*{0.15}}}
\put(4.025,0.257){\hbox{\kern3pt \texttt{$a_3$}}}
\put(5.8666,1.133){{\circle*{0.15}}}
\put(5.8666,0.833){\hbox{\kern3pt \texttt{$a_4$}}}
\put(4.7811,0.7521){{\circle*{0.15}}}   
\put(4.5811,0.4521){\hbox{\kern3pt \texttt{$a_5$}}}

\thinlines
\put(2.67376,0){\line(4.3264,1.534){5.32}}

\put(0,3.5854){\line(1,-0.29273){8}}
\put(0,3.7943){\line(1,-1.1503){3.3}}
\put(0,3.6624){\line(1,-0.6087){6}}
\put(0,3.69504){\line(1,-0.7427){4.98}}
\put(0,3.6173){\line(1,-0.42346){8}}

\put(7.5,1.9){\hbox{\kern3pt \texttt{$\ell$}}}
\end{picture}}
\caption{We construct the projection with center $c$.\label{fig: k=5 projection construction}}
\end{figure}

We now formalize the second half of Example~\ref{ex:k=5} as a theorem. As seen in Theorem~\ref{thm:k=5}, for $Z_5$ to drop rank, either the $x_i$'s or the $y_i$'s have to be in a line. 
Suppose the $y_i$'s are distinct and in a line, and the $x_i$'s are in general position.
Then the $x_i$'s lie on a conic $\omega$ and we will see that there is an 
isomorphism taking $\omega$ to the line containing the $y_i$'s so that each $x_i$ maps to $y_i$. 

\begin{thm}\label{thm: k=5 geometry theorem}
Suppose we have a configuration $\{(x_i,y_i)\}_{i=1}^5$ such that no $3$ of the $x_i$'s are collinear and the $y_i$'s are distinct and on a line $\ell_y$.  Let $\omega$ be the conic through $x_1,\ldots, x_5$. Suppose further that no inherited condition for rank drop holds. Then the following are equivalent: 
\begin{enumerate}
    \item The matrix $Z_5$ is rank deficient.
    \item For all $c\in\omega \setminus \{x_1,\ldots,x_5\}$ there exists a projective transformation $T_c \,:\, \PP^2 \rightarrow \PP^1$ centered at $c$ such that $T_c (x_i)\sim y_i$ for all $i$.
    \item For some $c\in\omega \setminus \{x_1,\ldots,x_5\}$ there exists a projective transformation $T_c \,:\, \PP^2 \rightarrow \PP^1$ centered at $c$ such that $T_c(x_i)\sim y_i$ for all $i$.
    \item There exists a cross-ratio preserving isomorphism $F:\omega\to\ell_y$ such that $F(x_i)=y_i$ for all $i$.
\end{enumerate} 
\end{thm}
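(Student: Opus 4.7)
The plan is to prove the equivalences in two arcs: first the geometric chain $(2) \Leftrightarrow (3) \Leftrightarrow (4)$ using standard properties of projection from a point of a conic, and then the algebraic equivalence $(1) \Leftrightarrow (4)$ by translating the bracket identities of Theorem~\ref{thm:k=5} into cross-ratio form.

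For the geometric arc, $(2) \Rightarrow (3)$ is immediate. For $(3) \Rightarrow (4)$, I would invoke the classical fact that projection $\pi_c : \omega \to \PP^1$ from a point $c$ on the smooth conic $\omega$ is an isomorphism preserving cross-ratios---indeed this underlies Definition~\ref{def:conic cross-ratio}. A projective map $T_c : \PP^2 \to \PP^1$ centered at $c$ is, up to a homography of the target, the projection $\pi_c$; once one identifies the target $\PP^1$ with $\ell_y$ via the homography that carries $T_c(x_i)$ to $y_i$, the restriction of $T_c$ to $\omega$ produces the desired cross-ratio preserving isomorphism $F: \omega \to \ell_y$ with $F(x_i) = y_i$. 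The implication $(4) \Rightarrow (2)$ reverses this construction: given $F$ and any $c \in \omega \setminus \{x_1,\ldots,x_5\}$, the composition $F \circ \pi_c^{-1}$ is a homography of $\PP^1$, and assembling it with $\pi_c$ produces a projection $T_c$ with $T_c(x_i) \sim y_i$.

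For $(1) \Leftrightarrow (4)$: under the hypotheses (no inherited condition, no three $x_i$'s collinear, $y_i$'s distinct on $\ell_y$) the five $x_i$'s are distinct and lie on a unique smooth conic $\omega$. By Theorem~\ref{thm:k=5}, $Z_5$ is rank deficient iff the bracket equations \eqref{eq: k=5 brackets} hold for all $j$. Dividing, the $j$-th such equation reads
\begin{equation*}
\frac{[i_1 i_3 j]_x [i_2 i_4 j]_x}{[i_1 i_4 j]_x [i_2 i_3 j]_x}
= \frac{[i_1 i_3]_y [i_2 i_4]_y}{[i_1 i_4]_y [i_2 i_3]_y}.
\end{equation*}
The left side is the planar cross-ratio $(x_{i_1},x_{i_2};x_{i_3},x_{i_4};x_j)$, and since all five $x_i$'s lie on $\omega$, Lemma~\ref{lem: conic formula} shows that this value is independent of $j$ and equals the conic cross-ratio $(x_{i_1},x_{i_2};x_{i_3},x_{i_4})_\omega$; the right side is the line cross-ratio $(y_{i_1},y_{i_2};y_{i_3},y_{i_4})$ on $\ell_y$. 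Thus rank deficiency of $Z_5$ is equivalent to equality of these conic and line cross-ratios for every admissible $4$-subset. Via any isomorphism $\omega \cong \PP^1$, the existence of a homography $\PP^1 \to \ell_y$ sending $x_i \mapsto y_i$ is determined by three points (Lemma~\ref{lem:cr=homography}) and constrained on the remaining two exactly by these cross-ratio equalities, which is precisely condition (4).

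The main obstacle I anticipate is a mild but essential one: the planar cross-ratio appearing in Theorem~\ref{thm:k=5} is taken around $x_j$, which itself lies on $\omega$, while Definition~\ref{def:conic cross-ratio} is phrased in terms of an auxiliary point $p \in \omega$ distinct from the four points under consideration. One must check carefully that $x_j$ is a legitimate such auxiliary point---it lies on $\omega$ and is distinct from $x_{i_1},\ldots,x_{i_4}$---so that Lemma~\ref{lem: conic formula} genuinely identifies the two quantities. Once that identification is justified, the rest of the argument is bookkeeping between projective, planar, and conic cross-ratios.
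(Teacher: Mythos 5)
Your proposal is correct and uses the same essential ingredients as the paper's proof: the translation of rank deficiency into conic-versus-line cross-ratio equalities via Theorem~\ref{thm:k=5} (with the observation that $x_j\in\omega$ serves as the auxiliary point in Definition~\ref{def:conic cross-ratio}), and the classical fact that projection from a point of $\omega$ realizes the conic cross-ratio. The only difference is cosmetic --- you chain the implications as $(2)\Leftrightarrow(3)\Leftrightarrow(4)$ and $(1)\Leftrightarrow(4)$, while the paper proves $(1)\Rightarrow(2)\Rightarrow(3)\Rightarrow(1)$ and $(1)\Leftrightarrow(4)$ --- so the argument stands as written.
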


\begin{proof}
To prove (1) implies (2), let $c\in\omega \setminus \{x_1, \ldots, x_5\}$. Then for all distinct $i_1,i_2,i_3,i_4,j\in\{1,\ldots,5\}$  
\begin{equation}
(y_{i_1},y_{i_2};y_{i_3},y_{i_4})=(x_{i_1},x_{i_2};x_{i_3},x_{i_4};x_j)=(x_{i_1},x_{i_2};x_{i_3},x_{i_4})_\omega=(x_{i_1},x_{i_2};x_{i_3},x_{i_4};c)
\end{equation}
for  any $c \in \omega$. The first equation is from \eqref{eq: k=5 cross-ratio equalities} in Theorem~\ref{thm:k=5} and the other two equalities follow by Definition \ref{def:conic cross-ratio}.
Let $\overline{cx_i}$ be the line through $c$ and $x_i$ and let $\ell$ be any line such that $c\not\in\ell$. Then if we define $a_i$ to be the unique intersection of $\ell$ and $\overline{cx_i}$ it follows that
\begin{equation}
(a_{i_1},a_{i_2};a_{i_3},a_{i_4})=(x_{i_1},x_{i_2};x_{i_3},x_{i_4};c)=(y_{i_1},y_{i_2};y_{i_3},y_{i_4})
\end{equation}
for all $i_1,i_2,i_3,i_4\in\{1,\ldots,5\}$. See Figure \ref{fig: k=5 projection construction}. It follows that there exists homography $H$ such that $Ha_i\sim y_i$ for all $i$. Therefore if $T'_c$ is the projection in $\PP^2_x$, centered at $c$, onto the line $\ell$ then $T_c :=HT'_c$ is the desired projective transformation with $T_c(x_i)\sim y_i$ for all $i$.

Clearly (2) implies (3). To show that (3) implies (1), suppose such a projection exists and is centered at $c$. Then
\begin{equation}
(x_{i_1},x_{i_2};x_{i_3},x_{i_4};x_j)=(x_{i_1},x_{i_2};x_{i_3},x_{i_4})_\omega=(x_{i_1},x_{i_2};x_{i_3},x_{i_4};c)=(y_{i_1},y_{i_2};y_{i_3},y_{i_4})
\end{equation}
for all distinct $i_1,i_2,i_3,i_4,j\in\{1,\ldots,5\}$. The first two equalities follow by Definition \ref{def:conic cross-ratio} and the last equality follows by hypothesis. It immediately follows that $Z_5$ is rank deficient by Theorem~\ref{thm:k=5}.

To show that (1) implies (4) we may assume that the cross-ratio equalities \eqref{eq: k=5 cross-ratio equalities} hold.  
Then define an isomorphism $F:\omega\to\ell_y$ by $F(x_1)=y_1, F(x_2)=y_2, F(x_3)=y_3$. Then for all $p\in\omega \setminus \{x_1,x_2,x_3\}$, $F(p)=q$ is such that $(x_1,x_2;x_3,p)_\omega=(y_1,y_2;y_3,q)$. It then follows that $F(x_4)=y_4$ and $F(x_5)=y_5$ because
\begin{equation}
(x_1,x_2;x_3,x_4)_\omega=(y_1,y_2;y_3,y_4)\quad\quad(x_1,x_2;x_3,x_5)_\omega=(y_1,y_2,y_3;y_5)
\end{equation}
by hypothesis (Theorem~\ref{thm:k=5}).

Finally, to show that (4) implies (1), suppose such an isomorphism $F$ exists. Then
\begin{equation}
(x_{i_1},x_{i_2};x_{i_3},x_{i_4};x_j)=(x_{i_1},x_{i_2};x_{i_3},x_{i_4})_\omega=(F(x_{i_1}),F(x_{i_2});F(x_{i_3}),F(x_{i_4}))=(y_{i_1},y_{i_2};y_{i_3},y_{i_4})
\end{equation}
for all $i_1,i_2,i_3,i_4,j\in\{1,\ldots,5\}$. The first equality follows by Definition \ref{def:conic cross-ratio} and the second and third equalities follow by hypothesis. It then follows that $Z_5$ is rank deficient by Theorem~\ref{thm:k=5}.
\end{proof}


\section{$k=6$}
\label{sec:k=6}

The final case we will consider in this paper is a set of $6$ inputs $\{(x_i,y_i)\}_{i=1}^6 \in \PP^2 \times \PP^2$. 
We organize our results in two parts. In the first part we state algebraic characterizations of the rank deficiency of $Z_6$. These are analogous to the results in the previous sections in that the results are in terms of invariant theory. 
In the second part, we explain the geometry that leads to these algebraic statements. This relies on the classical theory of cubic surfaces and its combinatorics. The geometry 
works in an open region where the input is sufficiently generic.   
However, as we saw in previous sections, the algebra that it translates to 
describes the Zariski closure of these open regions and hence the set of all inputs that make $Z_6$ rank deficient.

We say that $p_1, \ldots, p_6 \in \PP^2$ are in {\bf general position} if 
at most $2$ points are in a line and at most $5$ points are on a conic. 
In several results below we will need that the $x$ points 
and/or the $y$ points are in general position. We will also need that a collection of point pairs $\{(x_i,y_i)\} \subset \PP^2 \times \PP^2$ is in \textbf{general position}, by which we will mean that
\begin{enumerate}
\item the $x$ points and $y$ points are in general position, and 
\item no more than $4$ pairs $(x_i,y_i)$ are related by a homography $Hx_i=y_i$.
\end{enumerate}
Note that if (1) holds then $Z_5$ has full rank because all of our previous conditions for rank drop require at least the $x_i$'s or the $y_i$'s to be in a line. 

The variety $\PP^2 \times \PP^2$ can be embedded in $\PP^8$ as the 
image of the {\bf Segre map}
\begin{align}\label{eq:Segre map}
\PP^2 \times \PP^2 \rightarrow \PP^8 \,\,\,\textup{ such that } \,\,\,
(x,y) \mapsto x^\top \otimes y^\top.
\end{align} 
In what follows it will be helpful to identify $\PP^8$ with the set of all $3 \times 3$ complex matrices up to scale via the bijection $M \leftrightarrow \textup{vec}(M)$. 
Then the Segre embedding of $\PP^2 \times \PP^2$ in $\PP^8$  (which we will also call $\PP^2 \times \PP^2$), can be identified with the set of rank one $3\times 3$
matrices up to scale since 
\begin{align}
x^\top \otimes y^\top = (\textup{vec}(yx^\top))^\top.
\end{align}
The Segre variety $\PP^2 \times \PP^2 \subset \PP^8$ 
has dimension $4$ and degree $6$. It is the zero locus 
of the determinantal ideal generated by the $2 \times 2$ minors of a symbolic $3 \times 3$ matrix whose entries denote the coordinates of $\PP^8$. 

We first show that 
given $5$ points $\{(x_i,y_i)\}_{i=1}^5$ in general position, there exists a unique $6$th point $(x_6,y_6) \in \PP^2 \times \PP^2$ such that $\{(x_i,y_i)\}_{i=1}^6$ is rank deficient.  

\begin{lem} \label{lem:6th point pair}
Let $\{(x_i,y_i)\}_{i=1}^5$ be in general position. Then there is a unique new $6$th point $(x_6,y_6) \in \PP^2 \times \PP^2$ such that the matrix $Z_6$ is rank deficient. This $6$th point is a rational function of the first $5$; if we use homographies to fix the first $4$ points as
\begin{align*}
x_1=(1,0,0)=y_1\quad\quad x_2=(0,1,0)=y_2\\
x_3=(0,0,1)=y_3\quad\quad x_4=(1,1,1)=y_4, 
\end{align*}
then the $6$th point has the formula:
\begin{equation}\label{eq:rational formulas}
\begin{split}
x_6=&(\frac{y_{53}-y_{52}}{y_{53}x_{52}-x_{53}y_{52}},\frac{y_{53}-y_{51}}{y_{53}x_{51}-x_{53}y_{51}},\frac{y_{51}-y_{52}}{y_{51}x_{52}-x_{51}y_{52}})\\
y_6=&(\frac{x_{53}-x_{52}}{y_{53}x_{52}-x_{53}y_{52}},\frac{x_{53}-x_{51}}{y_{53}x_{51}-x_{53}y_{51}},\frac{x_{51}-x_{52}}{y_{51}x_{52}-x_{51}y_{52}}).
\end{split}
\end{equation}
\end{lem}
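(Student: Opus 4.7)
The plan is to interpret the problem geometrically via the Segre embedding $\sigma\colon \PP^2\times\PP^2 \hookrightarrow \PP^8$ in \eqref{eq:Segre map}, identifying $\CC^9$ with the space of $3\times 3$ matrices through $\mathrm{vec}$ so that a row $x_i^\top\otimes y_i^\top$ of $Z_k$ corresponds to the rank-one matrix $y_i x_i^\top$. Since $\{(x_i,y_i)\}_{i=1}^5$ is in general position, none of the rank-drop conditions from Sections~\ref{sec:k less than equal 4} and~\ref{sec:k=5} apply, so $Z_5$ has full rank and its row span $R\subset\CC^9$ is $5$-dimensional. A new pair $(x_6,y_6)$ makes $Z_6$ rank deficient precisely when $\sigma(x_6,y_6)\in\PP(R)\cong\PP^4\subset\PP^8$, reducing the entire lemma to understanding the intersection of $\PP(R)$ with the Segre variety.

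The next step is Bezout. The Segre variety has dimension $4$ and degree $6$ in $\PP^8$, so a $\PP^4$ meeting it properly does so in exactly $6$ points counted with multiplicity. Five of these are already supplied by $\sigma(x_1,y_1),\ldots,\sigma(x_5,y_5)$, so if one can show the intersection is $0$-dimensional and that each of these five points is a reduced intersection, then a unique sixth point $\sigma(x_6,y_6)$ must appear as the residual. Properness follows from the fact that the positive-dimensional linear subspaces of the Segre all lie inside a ruling $\{x\}\times\PP^2$ or $\PP^2\times\{y\}$; for $\PP(R)$ to contain such a line two of the $(x_i,y_i)$ would have to share an $x$- or $y$-coordinate, violating general position. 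Transversality at each $\sigma(x_i,y_i)$ reduces to checking that $R$ meets the tangent space $\{y_i(dx)^\top+(dy)x_i^\top: dx,dy\in\CC^3\}$ only in the line $\CC\cdot y_i x_i^\top$, which again is an open condition driven by general position.

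Rationality of $(x_6,y_6)$ in the first five pairs is then automatic, since the residual point of a linearly varying $\PP^4\cap X$ intersection depends rationally on the parameters once the other intersections are factored off. To extract the explicit formulas \eqref{eq:rational formulas}, I would use Lemma~\ref{lem:rank drop is invariant under homographies} and Corollary~\ref{cor:assumptions} to fix the first four pairs as displayed, leaving only $x_5, y_5$ as free parameters. The condition $y_6^\top M x_6=0$ for every $M$ in the $4$-dimensional kernel of $Z_5$ then becomes a small bilinear system in $(x_6,y_6)$; solving it symbolically in Macaulay2 and separating off the five known solutions isolates the sixth. A direct substitution check then confirms that the displayed expressions for $x_6, y_6$ indeed annihilate every $6\times 6$ minor of $Z_6$, in keeping with the computational strategy outlined in the Introduction.

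The main obstacle will be the uniqueness step: Bezout only produces a total intersection multiplicity of $6$, and one must argue that the five known points contribute exactly five of these with multiplicity one. This requires the general position hypothesis to be strong enough to force transversality of $\PP(R)$ with the Segre at all five known points simultaneously and, separately, to prevent $\PP(R)$ from containing any ruling line of the Segre. Unpacking these two consequences of general position into explicit non-vanishing conditions is where the bulk of the technical work lies.
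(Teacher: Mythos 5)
Your overall strategy is exactly the paper's: identify the row span $\mathcal{R}(Z_5)$ with a $\PP^4\subset\PP^8$, observe that a new pair $(x_6,y_6)$ makes $Z_6$ rank deficient iff its Segre image lies in that $\PP^4$, invoke B\'ezout against the degree-$6$, dimension-$4$ Segre variety to get a sixth residual intersection point, and then compute the explicit formula \eqref{eq:rational formulas} in Macaulay2 after fixing the first four pairs. The paper's own proof is in fact terser than yours, simply asserting that the intersection is ``generically'' transversal in six distinct points, so you are not missing anything the authors supply.

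One caution about the place where you try to be more careful than the paper: your properness argument is not right as stated. You claim that a positive-dimensional component of $\PP(\mathcal{R}(Z_5))\cap(\PP^2\times\PP^2)$ would force $\PP(\mathcal{R}(Z_5))$ to contain a ruling line $\{x\}\times\PP^2$ or $\PP^2\times\{y\}$, but a positive-dimensional intersection with a linear space need not contain any line of the Segre. The relevant degenerate case is precisely when all five pairs satisfy $y_i=Hx_i$ for a single homography $H$: then every matrix $Hxx^\top$ lies in a $\PP^5$ containing $\mathcal{R}(Z_5)$, and $\PP(\mathcal{R}(Z_5))$ meets the Segre in a curve on a Veronese surface rather than in a line (compare Example~\ref{ex:homography bad}). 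This is exactly what clause (2) of the paper's definition of general position (no more than $4$ pairs related by a homography) is there to exclude, so the hypothesis you need is available; you just need to use that clause rather than the ruling-line argument. With that repair, and granting the Macaulay2 verification of \eqref{eq:rational formulas}, your proof matches the paper's.
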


\begin{proof} 
Let $\mathcal{R}(Z_5)$ denote the row span of $Z_5 = 
(x_i \otimes y_i)_{i=1}^5$. By general position, 
$\rank(Z_5) = 5$ which means that 
 $\mathcal{R}(Z_5) \cong
\PP^4$. Therefore, generically, $\mathcal{R}(Z_5)$ intersects the $4$-dimensional, degree $6$ Segre variety $\PP^2 \times \PP^2$ transversally in $6$ distinct points.  Each of these points corresponds to a rank one matrix $yx^\top$ which we have identified with 
$x^\top \otimes y^\top$. Five of these intersection points are the matrices $x_i^\top \otimes y_i^\top$ for $i=1, \ldots, 5$ and the $6$th point $x_6^\top \otimes y_6^\top$ corresponds to a unique point $(x_6,y_6) \in \PP^2 \times \PP^2$.

To prove the second part of the Lemma, we first use homographies to fix the first $4$ points as mentioned. We can then use Macaulay2 to verify that we can solve for $(x_6,y_6)$ in terms of $(x_5,y_5)$ as in equations \eqref{eq:rational formulas}.
\end{proof}

The existence of, and constructions for, this $6$th point were noted in both \cite{chiral} and \cite{wernerfivepoints}, but not the rational formula. The rational formula shows that if $(x_i,y_i)_{i=1}^5$ are real then $(x_6,y_6)$ will be real as well. 
Note that if $\{x_i\}_{i=1}^5$ and $\{y_i\}_{i=1}^5$ were related by homography then the denominators in \eqref{eq:rational formulas} would all be $0$. Here is an illustration of Lemma~\ref{lem:6th point pair}. 

\begin{ex}\label{ex:rational formula part one}
We begin with $5$ point pairs
\begin{align*}
x_1&=(1,0,0)\quad y_1=(1,0,0) &\quad\quad
x_2&=(0,1,0)\quad y_2=(0,1,0)\\
x_3&=(0,0,1)\quad y_3=(0,0,1) &\quad\quad
x_4&=(1,1,1)\quad y_4=(1,1,1)\\
x_5&=(3,5,1)\quad y_5=(8,2,1)
\end{align*}
If we leave $(x_6,y_6)$ symbolic and construct the matrix $Z_6$, we can verify with Macaulay2 that the unique new point pair that will result in rank deficiency is $x_6\sim(-\frac{1}{3},\frac{7}{5},\frac{3}{17}),~ y_6\sim(-\frac{4}{3},\frac{2}{5},-\frac{1}{17})$, which is exactly what we get from the formula above.
\end{ex}

In Lemma~\ref{lem:counterpart rational function} we will prove a counterpart to the above result, where we instead begin with $x_1,\ldots,x_6$ and calculate (up to homography) the points $y_1,\ldots,y_6$ that will result in rank deficiency.

\subsection{Algebraic Characterizations of Rank Deficiency}\label{sec:k=6 algebra}
In this section we will write down various algebraic characterizations of rank deficiency. While these statements are by no means obvious or intuitive, once stated, they are straightforward to check using Macaulay2.  In the next section we will explain the geometric  origins of these statements. 

Our first result is a characterization of rank drop using brackets and cross-ratios as in the previous sections. 
\begin{defn} \label{def:cross-ratios ideal} 
Given $6$ input points $(x_i,y_i) \in 
\PP^2 \times \PP^2$, 
let $\mathcal{B}_{rackets}(Z_6)$ be the ideal generated by the differences of the two sides in the following bracket equations: 
\begin{equation} \label{eq:k=6 brackets}
[ikp]_x[jrp]_x[irs]_y[jks]_y=[irp]_x[jkp]_x[iks]_y[jrs]_y
\end{equation}
for all distinct indices $i,j,k,r,p,s\in\{1,2,3,4,5,6\}$. 
\end{defn}

Up to permutations, there are $30$ equalities in \eqref{eq:k=6 brackets}: one for each choice of $p$ and $s$. Under sufficient genericity, the bracket 
equations are the planar cross-ratio equalities:
\begin{equation}\label{eq:k=6 planar cross-ratio conditions}
(x_i,x_j;x_k,x_r;x_p)=(y_i,y_j;y_k,y_r;y_s). 
\end{equation}

Next we define a specific kind of highly degenerate configuration.

\begin{defn}
We say that $\{(x_i,y_i)\}_{i=1}^6$ is an \textbf{asymmetric double triangle} if the $\{x_i\}$ and $\{y_i\}$ each consist of $3$ distinct points such that, up to reordering, $y_1=y_2$, $y_3=y_4$, $y_5=y_6$, $x_1=x_4$, $x_2=x_5$, and $x_3=x_6$. 
\end{defn}
\begin{lem}\label{lem:dtriangleref}
Let $\{(x_i,y_i)\}_{i=1}^6$ be an asymmetric double triangle. Then $Z_6$ is rank deficient if and only if an inherited condition holds.
\end{lem}
\begin{proof}
We may assume up to reordering that $y_1=y_2:=v_1$, $y_3=y_4:=v_2$, $y_5=y_6:=v_3$, $x_1=x_4:=u_1$, $x_2=x_5:=u_2$, and $x_3=x_6:=u_3$. If $u_1,u_2,u_3$ and $v_1,v_2,v_3$ are both non-collinear, then we can assume by change of coordinates that $u_1=v_1=(1,0,0)$, $u_2=v_2=(0,1,0)$, and $u_3=v_3=(0,0,1)$. It is then simple to check that $Z_6$ has full rank.

If the three of points of one type are distinct but collinear and the three points of the other type are non-collinear, then we may assume up to change of coordinates and relabeling that $u_1=v_1=(1,0,0)$, $u_2=v_2=(0,1,0)$, $u_3=(1,1,0)$, and $v_3=(0,0,1)$. Again, it is simple to check that $Z_6$ has full rank.

If two points of one type are coincident, say $u_1=u_2$, then an inherited condition holds and rank drops. If $u_1,u_2,u_3$ and $v_1,v_2,v_3$ are both collinear then again rank drops due to an inherited condition. 
\end{proof}
Let $\mathcal{M}_{inors}(Z_6)$ denote the ideal of maximal minors of $Z_6$. The following statement can be checked using Macaulay2. 

\begin{thm}\label{thm:k=6 neither side in a line}
Let $\{(x_i,y_i)\}_{i=1}^6$ be such that neither the $x_i$'s nor the $y_i$'s are in a line and 
$\{(x_i,y_i)\}_{i=1}^6$ is not an asymmetric double triangle. Then $Z_6$ is rank deficient if and only if either one of the inherited conditions hold or
the $30$ bracket equations \eqref{eq:k=6 brackets} hold.
\end{thm}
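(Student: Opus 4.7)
The plan is computational, following the general strategy described in the introduction: compute the primary decomposition of the minors ideal of $Z_6$ over $\QQ$ and interpret the components, then compare with the bracket ideal of Definition~\ref{def:cross-ratios ideal}. First I would use Lemma~\ref{lem:rank drop is invariant under homographies} to eliminate as many variables as possible. Since neither set of points lies in a line, the generic sub-case has no three points collinear on either side, and Corollary~\ref{cor:assumptions} lets one fix the first four $x_i$'s and $y_i$'s as in \eqref{eq:canonical fixing}. The remaining sub-cases (three $x_i$'s collinear, or three $y_i$'s collinear, or both) are a short combinatorial case-split; in each one can still fix several point-pairs by analogous homographies, leaving only a handful of free coordinates.

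Within each case the proof reduces to a matching of ideals. Let $\mathcal{M}_{inors}(Z_6)$ denote the ideal of $6\times 6$ minors of the symbolic $Z_6$. The plan is to compute its radical primary decomposition in Macaulay2 and sort each associated prime into one of two buckets: either (a) it is cut out by equations describing an inherited rank-drop configuration from Theorems~\ref{thm:k=2,3,4} and \ref{thm:k=5} (repeated points, triply or quadruply coincident points together with collinearity on the other side, or the $k{=}5$ bracket locus when five points on one side become collinear), in which case it contributes to the \emph{inherited} alternative in the statement; or (b) it is contained in $V(\mathcal{B}_{rackets}(Z_6))$, which is checked by a direct ideal-containment test against the thirty generators of $\mathcal{B}_{rackets}(Z_6)$. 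This establishes the implication \emph{rank deficient} $\Longrightarrow$ \emph{inherited, or brackets hold}.

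For the converse direction I would compute the primary decomposition of $\mathcal{B}_{rackets}(Z_6)$. One expects this decomposition to contain every prime of $\mathcal{M}_{inors}(Z_6)$ together with exactly one extraneous component: the asymmetric-double-triangle locus, parametrized (after reordering) by $y_1{=}y_2$, $y_3{=}y_4$, $y_5{=}y_6$, $x_1{=}x_4$, $x_2{=}x_5$, $x_3{=}x_6$. To confirm this component is genuinely spurious, substitute a symbolic asymmetric double triangle into $Z_6$ and verify by direct computation that the resulting matrix has full rank $6$. Thus the bracket equations together with the exclusion of asymmetric double triangles exactly cut out the non-inherited rank-drop locus.

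The main obstacle is practical rather than conceptual. The ideal of $6\times 6$ minors of a $6\times 9$ matrix has $84$ generators before any substitution, and its primary decomposition over $\QQ$ does not terminate in reasonable time without aggressive reduction; the combinatorial case-split on collinearity patterns combined with the four-point fixings from Corollary~\ref{cor:assumptions} is what makes each computation feasible. The only genuinely subtle point of interpretation is the asymmetric double triangle itself: recognizing that it appears as a component of $V(\mathcal{B}_{rackets}(Z_6))$ but not of $V(\mathcal{M}_{inors}(Z_6))$ requires comparing the two decompositions side by side and then certifying the mismatch with a direct rank calculation on the parametrized locus.
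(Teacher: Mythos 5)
Your overall strategy is the same as the paper's: after homography reduction, decompose $\mathcal{M}_{inors}(Z_6)$ and $\mathcal{B}_{rackets}(Z_6)$ in Macaulay2, match the non-inherited component of the former with a component of the latter, and certify that the leftover components of the bracket ideal are exactly the asymmetric double triangle loci, on which $Z_6$ has full rank. Two details of your plan would need repair once you sat down to execute it. First, the fixing: Corollary~\ref{cor:assumptions} requires that no three points on either side be collinear, which is strictly stronger than the theorem's hypothesis that neither side lies entirely in a line, so your main computation only covers the generic sub-case and the ``short combinatorial case-split'' you defer is where the real work hides (collinear triples, four or five points on a line, coincidences, and combinations thereof). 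The paper sidesteps this entirely with two three-point fixings --- $x_1,x_2,x_3$ and $y_1,y_2,y_3$ both fixed to the standard frame, or $x_1,x_2,x_3$ and $y_1,y_2,y_4$ when $y_1,y_2,y_3$ happen to be collinear --- which together exhaust the hypothesis with no residual cases. Second, your predicted shape for the decomposition of $\mathcal{B}_{rackets}(Z_6)$ is off: it does \emph{not} contain the repeated-point (inherited) primes of $\mathcal{M}_{inors}(Z_6)$ as components, and the extraneous part is not a single component but a dozen or more asymmetric-double-triangle components, one per compatible relabeling (the paper finds $12$ and $20$ in its two fixings). Neither error is fatal --- the logical skeleton only needs the new minors component to appear in the bracket decomposition and all other bracket components to be ADT loci, which is exactly what the computation returns --- but as written the converse direction of your argument rests on a decomposition that will not materialize in the form you describe.
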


\begin{proof}
As in the proof of Theorem \ref{thm:k=5} we split into two cases. Applying homographies we can assume either that
\begin{align*}
x_1=(1,0,0)=y_1\quad\quad x_2=(0,1,0)=y_2\quad\quad x_3=(0,0,1)=y_3. 
\end{align*}
or
\begin{align*}
x_1=(1,0,0)=y_1\quad\quad x_2=(0,1,0)=y_2\quad\quad x_3=(0,0,1)=y_4. 
\end{align*}
In the first case the ideal $\mathcal{M}_{inors}(Z_6)$ decomposes into $19$ components. The first $6$ correspond to invalid inputs $x_i,y_i=(0,0,0)$, the next $12$ correspond to repeated point pairs, and the $19$th component we refer to as $M_{19}$. The ideal $\mathcal{B}_{rackets}(Z_6)$ decomposes into $13$ components. One 
component is $M_{19}$ and the other $12$ components correspond to asymmetric double triangles.

In the second case the ideal $\mathcal{M}_{inors}(Z_6)$ decomposes into $17$ components. The first $6$ correspond to invalid inputs $x_i,y_i=(0,0,0)$, the next $10$ correspond to repeated point pairs, and the $17$th component we refer to as $M_{17}$. The ideal $\mathcal{B}_{rackets}(Z_6)$ decomposes into $21$ components. One 
component is $M_{17}$ and the other $20$ components correspond to asymmetric double triangles.
\end{proof}

In Theorem~\ref{thm:k=6 neither side in a line} the only assumption is that neither the $x_i$'s nor the $y_i$'s are in a line. 
In Theorem~\ref{thm: k=6 one side in a line} we will tackle the case of one side in a line, and together the two theorems 
cover all possibilities. 

Before we can do that, we consider an important subcase of~\Cref{thm:k=6 neither side in a line}, namely the generic case. The main result here is~\Cref{thm:k=6 general position} which uses tools from invariant theory that we have not encountered thus far. Besides being a key result of this paper, it 
will also  lead to a proof of~\Cref{thm: k=6 one side in a line},  completing the story. 

Suppose $x_1, \ldots, x_6 \in \PP^2_x$ and $y_1, \ldots, y_6 \in \PP^2_y$ are 
in general position. Then we may fix 
\begin{align} \label{eq:fixing}
\begin{split}
x_1 = y_1 = (1,0,0), \quad\quad x_2 = y_2 = (0,1,0), \\
x_3 = y_3 = (0,0,1), \quad\quad x_4=y_4 = (1,1,1)
\end{split}
\end{align}
and consider the rank deficiency of 
$Z_6$ whose $5$th and $6$th rows are left symbolic as:
\begin{align} \label{eq:rows of Z6}
\begin{split}
    (x_{51},x_{52},x_{53})\otimes(y_{51},y_{52},y_{53}),(x_{61},x_{62},x_{63})\otimes(y_{61},y_{62},y_{63}).
    \end{split}
\end{align}
Recall that the general position assumption guarantees that 
no submatrix of $Z_6$ with $5$ rows is rank deficient.  
Under these assumptions, one can use Macaulay2 to check that $\mathcal{M}_{inors}(Z_6)$ is a radical ideal of dimension $9$ and degree $4$. Its prime decomposition consists of $14$ components. The first $4$ of these components correspond to invalid inputs $x_i,y_i=(0,0,0)$, the next $9$ correspond to repeated points, and the $14$th component, denoted as $M_{14}$, encodes a 
new set of conditions, and has dimension $8$ and degree $35$. Since $Z_6$ is rank deficient if and only if all its maximal minors vanish,  we conclude that the new conditions for rank drop 
are exactly that $(x_5,y_5)$ and $(x_6,y_6)$ lie in the variety of $M_{14}$.

We will now interpret $M_{14}$ in terms of the invariants of $x_1, \ldots, x_6$ and $y_1, \ldots, y_6$. 
For a set of $6$ points $p_1,\ldots,p_6 \in \PP^2$ define 
\begin{equation}[(ij)(kl)(rs)]:=[ijr][kls]-[ijs][klr].\end{equation}
This is a classical invariant of points in $\PP^2$ 
whose vanishing expresses that the three lines 
$\overline{p_ip_j}$, $\overline{p_kp_l}$ and $\overline{p_rp_s}$ 
meet in a point \cite{coble}*{pp 169}. Using these invariants, Coble 
defines the following $6$ scalars \cite{coble}*{pp 170}:
\begin{small}
\begin{align}\label{eq:bumped up Joubert invariants}
\begin{split}
\bar{a}&=[(25)(13)(46)]+[(51)(42)(36)]+[(14)(35)(26)]+[(43)(21)(56)]+[(32)(54)(16)]\\
\bar{b}&=[(53)(12)(46)]+[(14)(23)(56)]+[(25)(34)(16)]+[(31)(45)(26)]+[(42)(51)(36)]\\
     \bar{c}&=[(53)(41)(26)]+[(34)(25)(16)]+[(42)(13)(56)]+[(21)(54)(36)]+[(15)(32)(46)]\\
\bar{d}&=[(45)(31)(26)]+[(53)(24)(16)]+[(41)(25)(36)]+[(32)(15)(46)]+[(21)(43)(56)]\\
\bar{e}&=[(31)(24)(56)]+[(12)(53)(46)]+[(25)(41)(36)]+[(54)(32)(16)]+[(43)(15)(26)]\\
\bar{f}&=[(42)(35)(16)]+[(23)(14)(56)]+[(31)(52)(46)]+[(15)(43)(26)]+[(54)(21)(36)]
\end{split}
\end{align}
\end{small}

Let $\bar{a}_x, \bar{b}_x, \bar{c}_x, \bar{d}_x, \bar{e}_x, \bar{f}_x$ be the polynomials 
obtained by computing the expressions in 
\eqref{eq:bumped up Joubert invariants} using 
$6$ symbolic vectors $x_1, \ldots, x_6$ where each $x_i = (x_{i1},x_{i2},x_{i3})^\top$. Similarly, let 
$\bar{a}_y, \bar{b}_y, \bar{c}_y, \bar{d}_y, \bar{e}_y, \bar{f}_y$ be  obtained from symbolic $y_1, \ldots, y_6$ where each $y_i = (y_{i1},y_{i2},y_{i3})^\top$. 

\begin{defn} \label{def:2x2 minors}
Let $\mathcal{I}_{nvariants}(Z_6)$ denote the ideal of $2 \times 2$ minors of 
\begin{align} \label{eq:2x6 matrix}
    \begin{bmatrix} \bar{a}_x &  \bar{b}_x &  \bar{c}_x &  \bar{d}_x &  \bar{e}_x &  \bar{f}_x\\
    \bar{a}_y &  \bar{b}_y &  \bar{c}_y & \bar{d}_y &  \bar{e}_y &  \bar{f}_y\end{bmatrix} 
\end{align}
after specializing the first $4$  points $(x_i,y_i)$ as in \eqref{eq:fixing}.
\end{defn}

The ideal $\mathcal{I}_{nvariants}(Z_6)$ is radical of dimension $9$ and degree $4$. Its prime decomposition consists of $14$ components. The first $4$ of these components correspond to invalid inputs $x_i,y_i=(0,0,0)$, the next $8$ correspond to $3$ repeated points in either $\PP^2_x$ or $\PP^2_y$, the $13$th component corresponds to $x_5\sim y_5$ and $x_6\sim y_6$ (which encodes that $\{x_i\}$ and $\{y_i\}$ are related by a homography). The $14$th component, denoted as $I_{14}$, provides a new condition. This ideal has dimension $8$ and degree $35$.

\begin{thm} \label{thm:k=6 general position}
Given a configuration $\{(x_i,y_i)\}_{i=1}^6$ such that the $\{x_i\}$ and $\{y_i\}$ are in general position, 
\begin{align} \label{eq:bar vectors are multiples of each other}
 \begin{pmatrix} \bar{a}_x &  \bar{b}_x &  \bar{c}_x &  \bar{d}_x &  \bar{e}_x &  \bar{f}_x
 \end{pmatrix} \sim
  \begin{pmatrix} 
    \bar{a}_y &  \bar{b}_y &  \bar{c}_y & \bar{d}_y &  \bar{e}_y &  \bar{f}_y \end{pmatrix}
\end{align}
if and only if either the matrix $Z_6$ is rank deficient or $y_i = Hx_i$ for some homography $H \in \textup{PGL}(3)$.
\end{thm}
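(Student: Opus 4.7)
The plan is to reduce to a Macaulay2 computation after using Lemma~\ref{lem:rank drop is invariant under homographies} to fix the first four point pairs as in \eqref{eq:fixing}. Under this fixing only the symbolic rows \eqref{eq:rows of Z6} remain, so the hypothesis ``$\{x_i\}$ and $\{y_i\}$ are in general position'' reduces to an open condition on the six variables determining $x_5, x_6, y_5, y_6$. The condition \eqref{eq:bar vectors are multiples of each other} is exactly proportionality of the two rows of \eqref{eq:2x6 matrix}, hence membership in the vanishing locus of $\mathcal{I}_{nvariants}(Z_6)$, while rank deficiency of $Z_6$ is membership in the vanishing locus of $\mathcal{M}_{inors}(Z_6)$. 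So the theorem amounts to the ideal-theoretic claim
\[
V(\mathcal{I}_{nvariants}(Z_6)) \;=\; V(\mathcal{M}_{inors}(Z_6)) \,\cup\, V(\mathrm{Homog})
\]
once one intersects with the general-position open set, where $V(\mathrm{Homog})$ is the locus where $y_i = H x_i$ for some $H \in \textup{PGL}(3)$.

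The first step is to verify the two prime decompositions already stated in the discussion preceding the theorem: both $\mathcal{M}_{inors}(Z_6)$ and $\mathcal{I}_{nvariants}(Z_6)$ are radical of dimension $9$ and degree $4$, and each has $14$ associated primes. In both decompositions, four of the primes encode invalid coordinate representatives $(0,0,0)$, and eight encode triples of coincident points among $\{x_1,\ldots,x_6\}$ or $\{y_1,\ldots,y_6\}$. These twelve components are ruled out by general position and so may be discarded.

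The second step is to identify the remaining components. In $\mathcal{I}_{nvariants}(Z_6)$ there are two non-trivial components: $I_{13}$, which I would check is defined by $x_5 \sim y_5$ and $x_6 \sim y_6$ (and hence, combined with the fixing $x_i = y_i$ for $i\leq 4$, is exactly the homography locus $V(\mathrm{Homog})$); and $I_{14}$, of dimension $8$ and degree $35$. In $\mathcal{M}_{inors}(Z_6)$ only one non-trivial component remains, namely $M_{14}$, also of dimension $8$ and degree $35$. The crux of the proof is therefore the identity
\[
M_{14} \;=\; I_{14},
\]
which I would verify symbolically in Macaulay2 by checking containment in both directions (equivalently, by checking that the two ideals have equal reduced Gr\"obner bases after the fixing).

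The hardest step is the equality $M_{14} = I_{14}$: nothing in the definition of the invariants $\bar{a}, \ldots, \bar{f}$ makes its relationship to the minors of $Z_6$ visible by hand, and the computation is of nontrivial size even with four points fixed. Its geometric meaning — namely, that the Cremona hexahedral form of the cubic surface associated to $6$ points in $\PP^2$ is encoded precisely by $(\bar{a},\ldots,\bar{f})$, and $Z_6$ drops rank exactly when the $x$-surface and $y$-surface coincide — is the subject of Section~\ref{sec:k=6 geometry} and motivates why the identity should be expected. Granting the computational identity, the theorem follows immediately: under general position, the proportionality \eqref{eq:bar vectors are multiples of each other} holds if and only if $(x_5, y_5, x_6, y_6)$ lies in $V(I_{13}) \cup V(I_{14}) = V(\mathrm{Homog}) \cup V(M_{14})$, which is the disjunction asserted by the theorem.
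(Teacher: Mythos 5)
Your proposal is correct and follows essentially the same route as the paper: fix the first four point pairs, compute the prime decompositions of $\mathcal{M}_{inors}(Z_6)$ and $\mathcal{I}_{nvariants}(Z_6)$, discard the components excluded by general position, identify the homography component of $\mathcal{I}_{nvariants}(Z_6)$, and reduce everything to the Macaulay2 verification that $M_{14}=I_{14}$, which is exactly the paper's one-line proof. (The only cosmetic discrepancy is in counting the discarded degenerate components --- the paper lists $9$ repeated-point primes for the minors ideal versus $8$ for the invariants ideal --- but these are all excluded by general position, so nothing is affected.)
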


\begin{proof}
It can be checked using Macaulay2 that $M_{14}=I_{14}$.
\end{proof}

The statement of Theorem~\ref{thm:k=6 general position} begs the question of how homography ties in with rank drop, if at all. We give an example to show that the existence of a homography such that $y_i = Hx_i$ for $6$ points 
$(x_i,y_i)$ is not sufficient for $Z_6$ to be 
rank deficient. In contrast, it does suffice for $7$ point pairs. 
\begin{ex}\label{ex:homography bad}
By Lemma \ref{lem:rank drop is invariant under homographies}, if there is a $H \in \textup{PGL}(3)$ such that $y_i = Hx_i$ for $i=1,\ldots, 6$, then we may assume that $x_i=y_i$ for $i=1,\ldots,6$. Then the null space of $Z_6$ consists exactly of the vectors $\textup{vec}(F)$ for the $3 \times 3$ matrices 
$F$ such that
\begin{equation}
    x_i^\top Fx_i=0, \,\,\forall\,\, i = 1, \ldots, 6.
\end{equation}
 Generically, this is exactly the set of $3 \times 3$ skew-symmetric matrices, $\text{Skew}_3 \cong \PP^2$. 
 Hence $\rank(Z_6) = 6$. 
 For example, consider
\begin{align*}
x_1=y_1=(1,0,0)\quad\quad x_2=y_2=(0,1,0)\quad\quad x_3=y_3=(0,0,1)\\
x_4=y_4=(1,1,1)\quad\quad x_5=y_5=(2,3,1)\quad\quad x_6=y_6=(3,7,1)
\end{align*}
for which the nullspace of $Z_6$ is $\text{Skew}_3$. In general, if the homography is given by the matrix $H$ then the null space of $Z_6$ is obtained by left action of $H^\top$ on $\text{Skew}_3$. We note that this null space is unusual in that will always consist entirely of rank $2$ matrices.
\end{ex}





To finish our algebraic discussion, we consider the case of one $\PP^2$ having all input points in a line.
In order to state the result, we introduce the classical {\bf Joubert invariants} of 
$6$ points in $\PP^1$ \cite{coble} which are: 
\begin{small}
\begin{align} \label{eq:Joubert invariants}
\begin{split}
A_y&=[25][13][46]+[51][42][36]+[14][35][26]+[43][21][56]+[32][54][16]\\
B_y&=[53][12][46]+[14][23][56]+[25][34][16]+[31][45][26]+[42][51][36]\\
C_y&=[53][41][26]+[34][25][16]+[42][13][56]+[21][54][36]+[15][32][46]\\
D_y&=[45][31][26]+[53][24][16]+[41][25][36]+[32][15][46]+[21][43][56]\\
E_y&=[31][24][56]+[12][53][46]+[25][41][36]+[54][32][16]+[43][15][26]\\
F_y&=[42][35][16]+[23][14][56]+[31][52][46]+[15][43][26]+[54][21][36].
\end{split}
\end{align}
\end{small}
\begin{thm}\label{thm: k=6 one side in a line}
Suppose $y_1, \ldots, y_6 \in \PP^2_y$ are in a line. Then $Z_6$  is rank deficient if and only if
\begin{equation}\label{eq: k=6 one side in a line basic eq}
\bar{a}_xA_y+\bar{b}_xB_y+\bar{c}_xC_y+\bar{d}_xD_y+\bar{e}_xE_y+\bar{f}_xF_y=0
\end{equation}
Furthermore, if $x_1,\ldots,x_6$ are in general position and $y_1,\ldots,y_6$ are distinct, then \eqref{eq: k=6 one side in a line basic eq} holds if and only if there exists a projection $T:\PP^2_x\to\PP^1_y$ such that $T(x_i)\sim y_i$ for all $i$.
\end{thm}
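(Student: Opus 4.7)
The plan is to reduce the problem to a single $6\times 6$ determinantal identity and then recognize the linear system behind the claimed projection. First, apply a homography in $\PP^2_y$ (justified by Lemma~\ref{lem:rank drop is invariant under homographies}) to send the line containing $y_1,\ldots,y_6$ to $\{y_3=0\}$, so that $y_i=(y_{i1},y_{i2},0)$ for all $i$. Columns $3,6,9$ of $Z_6$ are then identically zero, and $\rank(Z_6)=\rank(W)$, where $W$ is the $6\times 6$ matrix whose $i$th row is
$$(x_{i1}y_{i1},\;x_{i1}y_{i2},\;x_{i2}y_{i1},\;x_{i2}y_{i2},\;x_{i3}y_{i1},\;x_{i3}y_{i2}).$$
Consequently $\rank(Z_6)<6$ if and only if $\det(W)=0$.

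Next, I would establish the polynomial identity
$$\det(W) \;=\; \pm\bigl(\bar{a}_xA_y+\bar{b}_xB_y+\bar{c}_xC_y+\bar{d}_xD_y+\bar{e}_xE_y+\bar{f}_xF_y\bigr),$$
in which the brackets $[ij]_y$ inside the Joubert invariants are computed in $\PP^1$ after dropping the vanishing third $y$-coordinate. Both sides are multilinear of degree $1$ in each $x_i$ and each $y_i$, so the assertion is a concrete polynomial identity in the $30$ variables $\{x_{ij}\}\cup\{y_{ij}\}$. I would verify it in Macaulay2, in line with the computational strategy already used for Theorems~\ref{thm:k=6 neither side in a line} and \ref{thm:k=6 general position}. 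Combined with the reduction above, this proves $\rank(Z_6)<6 \Leftrightarrow \eqref{eq: k=6 one side in a line basic eq}$.

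For the geometric half, represent a candidate projection $T:\PP^2_x\to\PP^1_y$ by a $2\times 3$ matrix $(t_{jk})$. Under the normalization $y_i=(y_{i1},y_{i2},0)$, the condition $T(x_i)\sim y_i$ becomes the linear equation
$$y_{i2}\sum_{j=1}^3 t_{1j}x_{ij}\;-\;y_{i1}\sum_{j=1}^3 t_{2j}x_{ij}\;=\;0,$$
and assembling the six conditions $i=1,\ldots,6$ produces a linear system in the six unknowns $t_{jk}$ whose coefficient matrix is $W$ up to signs and a permutation of columns. Hence a nonzero $T$ exists precisely when $\det(W)=0$. Under the stated genericity (distinct $y_i$'s and general-position $x_i$'s), any nonzero $T$ must have rank $2$: rank $\le 1$ would force all $T(x_i)$ to be proportional to a single vector and hence coincide in $\PP^1$, contradicting distinctness of the $y_i$'s. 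Thus $T$ is a genuine projection with center $\ker T\in\PP^2_x$.

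The main obstacle is the polynomial identity in the second step. The determinant $\det(W)$ expands into $720$ monomials in the $(x_{ij},y_{ij})$, while the right-hand side is a sum of $300$ products of brackets; matching them by hand is impractical. The Macaulay2 verification is short but is the technical core of the argument. A more conceptual route would show that both sides lie in the small space of $\textup{SL}(3)_x\times\textup{SL}(2)_y$-invariants of the prescribed multidegree, and then match coefficients by evaluating at a single sufficiently generic configuration.
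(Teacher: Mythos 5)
Your first half is essentially the paper's own argument: after a homography puts the $y_i$ on a fixed line, three columns of $Z_6$ vanish, rank deficiency reduces to the vanishing of the single surviving $6\times 6$ minor, and that minor is matched against $\bar{a}_xA_y+\cdots+\bar{f}_xF_y$ by a Macaulay2 computation. (The paper normalizes a bit further, to $y_i=(y_{i1},1,0)$ and finite $x_i$, and records the constant as $24$ rather than $\pm1$; your multidegree argument for why a single symbolic verification suffices is sound.) Your second half, however, is a genuinely different and more elementary route: you read $W$ as the coefficient matrix of the linear system in the six entries of a $2\times 3$ matrix $T$ imposed by $T(x_i)\sim y_i$, so that nontrivial solvability is exactly $\det(W)=0$. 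The paper instead goes through the Cremona hexahedral form: equation \eqref{eq: k=6 one side in a line basic eq} says $(A_y,\ldots,F_y)$ lies on the surface $S_x$, one then finds $u\in\PP^2_x$ with $(a_x(u),\ldots,f_x(u))\sim(A_y,\ldots,F_y)$, and builds the projection centered at $u$ using the fact that the Joubert invariants determine six points on a line up to homography. Your approach avoids all of that machinery and makes the forward implication (projection $\Rightarrow$ rank drop) completely transparent.

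There is, however, a gap in your reverse implication. The equation $y_{i2}(Tx_i)_1-y_{i1}(Tx_i)_2=0$ encodes ``$Tx_i\sim y_i$ \emph{or} $Tx_i=0$,'' and you only exclude the case $\rank T\le 1$. You do not exclude the possibility that the rank-$2$ null vector $T$ has its center at one of the $x_i$, in which case that point is killed rather than sent to $y_i$; this is precisely the degeneracy exhibited for $k=5$ in Example~\ref{ex:k=5}, where $T_{\lambda_i}(x_i)=0\not\sim y_i$. The case is not vacuous: if $y_1,\ldots,y_5$ are the images of $x_1,\ldots,x_5$ under the projection centered at $x_6$, that projection annihilates row $6$ of the system for \emph{every} choice of $y_6$, so $\det(W)=0$ while the unique (up to scale) candidate $T$ fails to satisfy $T(x_6)\sim y_6$. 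So you need either an additional genericity hypothesis ruling out this locus or a separate argument handling it. To be fair, the paper's own proof skates over the corresponding subtlety --- the step ``$(A_y,\ldots,F_y)\in S$ iff there exists $u$ with $(a_x(u),\ldots,f_x(u))\sim(A_y,\ldots,F_y)$'' fails when the point lies on an exceptional curve of $S_x$, which is exactly the same locus --- but your write-up should at least flag that the reverse direction only holds away from configurations where the interpolating $T$ is centered at one of the $x_i$.
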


This theorem is analogous to Theorems \ref{thm:k=2,3,4} (3)(b) and \ref{thm: k=5 geometry theorem}. The proof of this theorem will follow from the geometry that we develop in the next section. 

\subsection{Geometric Characterizations of Rank Deficiency}
\label{sec:k=6 geometry}
We next explain where the algebraic results (and ideals) from the previous section come from by providing a geometric characterization of  rank deficiency of $Z_6$. This relies on
the classical theory of smooth cubic surfaces in $\PP^3$. For the convenience of the reader, we have collected the needed facts about cubic surfaces in Appendix~\ref{sec:cubic surface facts}.

\subsubsection{Cubic surfaces and Sch\"afli double sixes}

A cubic surface in $\PP^3$ is a hypersurface defined by a single homogenous polynomial of degree $3$ in $\CC[z_0,z_1,z_2,z_3]$ where $(z_0,\ldots,z_3)$ are the coordinates of $\PP^3$. 

\begin{defn} \label{def:SD6}
A pair of 6 lines $\{\ell_1, \ldots, \ell_6\}$ and 
$\{\ell_1', \ldots, \ell_6'\}$ in $\PP^3$ is called a {\bf Schl\"afli double six} if 
\begin{enumerate}
    \item the $6$ lines in each set are mutually skew (i.e., they don't intersect pairwise), and \item $\ell_i$ intersects $\ell_j'$ if and only if $i \neq j$.
    \end{enumerate}
\end{defn}
A smooth cubic surface has $36$ Schl\"afli double sixes. 
The rich and highly symmetric properties of these line configurations are listed in Appendix~\ref{sec:cubic surface facts} and lie at the heart of all of our major results in this section.
Our main (geometric) theorem in this section is the following. 

\begin{thm}\label{thm:k=6 central theorem}
Suppose $\{(x_i,y_i)\}_{i=1}^6 \subset \PP^2 \times \PP^2$ is in general position. Then $Z_6$ is 
 rank  deficient if and only if there exists a cubic surface $\mathcal{S}$ that can be obtained as the 
 blow up of both $\PP^2_x$ in $x_1, \ldots, x_6$ and $\PP^2_y$ in
 $y_1, \ldots, y_6$ such that their respective 
 exceptional curves 
$\{\ell_1,\ldots,\ell_6\}, \{\ell_1',\ldots,\ell_6'\}$
form a Schl\"afli double six on $\mathcal{S}$. 
When $Z_6$ is rank deficient, this cubic surface has the determinantal representation $\mathcal{N}(Z_6)\cap\mathcal{D}$, where  $\mathcal{N}(Z_6)$ denotes the right null space of $Z_6$ and $\mathcal{D}$ is the cubic hypersurface of all rank deficient $3\times 3$ matrices.
\end{thm}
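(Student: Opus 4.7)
The plan is to identify $\PP^8$ with the projectivization of the space of $3\times 3$ matrices via $\operatorname{vec}$, so that $N:=\mathcal{N}(Z_6)$ becomes the projectivization of $\{M : y_i^\top M x_i = 0,\ i=1,\ldots,6\}$ and $\mathcal{D}$ is the cubic hypersurface $\{\det M = 0\}$. General position of $\{(x_i,y_i)\}$, together with the results of Sections~\ref{sec:k less than equal 4}--\ref{sec:k=5}, guarantees that no five-row submatrix of $Z_6$ drops rank, so $\rank(Z_6)\ge 5$, and $Z_6$ is rank deficient iff $N\cong\PP^3$. In that case $\mathcal{S}:=N\cap\mathcal{D}$ is a cubic surface in $N$, and the theorem amounts to showing that $\mathcal{S}$ carries a Schl\"afli double six whose two rulings recover $\{x_i\}$ and $\{y_i\}$ under the kernel and cokernel maps.

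For the forward direction, assume $N\cong\PP^3$ and define
\begin{equation*}
\ell_i := \{M\in N : Mx_i = 0\}, \qquad \ell_i' := \{M\in N : y_i^\top M = 0\}.
\end{equation*}
Each is a line on $\mathcal{S}$: the three linear conditions $Mx_i=0$ contain the $i$-th defining equation $y_i^\top Mx_i=0$ of $N$ in their span and so impose only two new conditions on $N\cong\PP^3$, while $x_i\in\ker M$ forces $\rank M\le 2$. The main work is the verification of the Schl\"afli double-six conditions from Definition~\ref{def:SD6}. Skewness $\ell_i\cap\ell_j=\emptyset$ for $i\ne j$ reduces to: any common point would be a rank-one $M=ab^\top$ with $b$ the unique (up to scale) direction perpendicular to $x_i,x_j$, after which the remaining four $N$-equations pin $a$ to $0$ for generic $\{y_k\}$; the argument for $\ell_i'\cap\ell_j'=\emptyset$ is symmetric, and $\ell_i\cap\ell_j'$ for $i\ne j$ is a single rank-one point by the same style of count. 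The main obstacle is $\ell_i\cap\ell_i'=\emptyset$: here $\{M:Mx_i=0,\,y_i^\top M=0\}$ is a four-dimensional family (parametrized by $M=P\tilde MQ^\top$ for fixed $P,Q$ arising from $y_i^\perp$ and $x_i^\perp$), on which the five remaining $N$-equations become pairings of $\tilde M$ with five rank-one matrices in the $4$-dimensional space of $2\times 2$ matrices. General position must exclude the degeneration in which these five rank-one matrices lie in a proper subspace, and this is precisely where the hypothesis that no more than four of the pairs $(x_i,y_i)$ are linked by a homography enters. I would complete this certification by a direct argument together with a Macaulay2 check in the canonical fixing of Corollary~\ref{cor:assumptions}, following the computational strategy used throughout the paper.

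Once the double six on $\mathcal{S}$ is established, Appendix~\ref{sec:cubic surface facts} supplies the rest: six mutually skew lines on a cubic surface force smoothness and exhibit $\mathcal{S}$ as the blowup of $\PP^2$ at six general points, with those lines as exceptional divisors. The rational map $M\mapsto\ker M$, well defined on the rank-two locus of $\mathcal{S}$, extends to a morphism that contracts each $\ell_i$ to the point $x_i$ (every $M\in\ell_i$ has $x_i\in\ker M$), thereby realizing $\mathcal{S}$ as a blowup of $\PP^2_x$ at $x_1,\ldots,x_6$; the symmetric cokernel map $M\mapsto\ker M^\top$ contracts $\ell_i'$ to $y_i$ and realizes $\mathcal{S}$ as a blowup of $\PP^2_y$ at $y_1,\ldots,y_6$. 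The determinantal form $\mathcal{S}=N\cap\mathcal{D}$ is built into the construction.

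For the converse, suppose such a cubic surface $\mathcal{S}$ with its double six and two blowup structures is given. The classical determinantal representation of a smooth cubic surface associated to a double six (see Appendix~\ref{sec:cubic surface facts}) yields a four-dimensional linear subspace $N'$ of $3\times 3$ matrices whose projectivization is a $\PP^3$ with $\mathcal{S}=\PP(N')\cap\mathcal{D}$, and inside which the kernel and cokernel maps contract the two rulings of the double six onto $\{x_i\}$ and $\{y_i\}$ respectively. For each $i$, the linear form $M\mapsto y_i^\top M x_i$ vanishes on $\ell_i$ (where $Mx_i=0$) and on $\ell_i'$ (where $y_i^\top M=0$); because $\ell_i$ and $\ell_i'$ are skew lines in $\PP(N')\cong\PP^3$ they span all of $N'$, so this linear form vanishes identically on $N'$. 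Hence $N'\subseteq\mathcal{N}(Z_6)$, and $\dim N'=4$ forces $\rank Z_6\le 5$, i.e., $Z_6$ is rank deficient, completing the proof.
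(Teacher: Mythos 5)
Your proof is correct in outline but follows a genuinely different route from the paper's, most notably in the converse direction. The paper's forward direction does not verify the double-six incidences for all six line pairs from scratch: it builds the surface from the first five pairs as $\mathcal{N}(Z_5)\cap\mathcal{D}$, imports the incidence relations for the ten lines $\ell_{x_i},\ell_{y_j}'$ ($i,j\le 5$) from \cite{chiral}, invokes the fact that five line pairs determine a unique completing double six (\Cref{lem:5 pairs give a Sd6}), and then identifies the completing pair with $\ell_{x_6},\ell_{y_6}'$ using the uniqueness of the sixth point pair (\Cref{lem:6th point pair}, proved by intersecting $\mathcal{R}(Z_5)\cong\PP^4$ with the degree-$6$ Segre variety) --- machinery your argument never needs. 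Your direct verification of the incidences is sound in the cases you sketch, and you correctly isolate the one nontrivial case, $\ell_i\cap\ell_i'=\emptyset$, and correctly tie it to clause (2) of general position (in the homography case $x_i=y_i$ the skew-symmetric matrix with kernel $x_i$ lies on both $\ell_i$ and $\ell_i'$, so the clause is genuinely needed); but this is the one step you leave to a promised Macaulay2 computation rather than discharging, and it must actually be carried out --- it is the only place where the forward direction is not yet complete, though deferring it to a symbolic computation is consistent with the paper's own methodology. Your converse is the more interesting divergence: the observation that the linear form $M\mapsto y_i^\top Mx_i$ vanishes on the two skew lines $\ell_i$ and $\ell_i'$, hence on their linear span, which is all of $\PP(N')$, gives $N'\subseteq\mathcal{N}(Z_6)$ in one stroke and entirely bypasses the paper's \Cref{lem:counterpart rational function} (a Macaulay2-verified rational formula for $y_5,y_6$) and the homography comparison in \Cref{cor:Sd6 gives rank deficiency}; this is cleaner and more conceptual. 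One small repair is needed there: \Cref{lem:determinantal representation facts} produces a representation $M(z)$ whose exceptional lines are $\{z: M(z)p_i=0\}$ for \emph{some} six points $p_i$ agreeing with the given $x_i$ only up to a homography of the target $\PP^2$, so you must first replace $M(z)$ by $AM(z)B$ for suitable $A,B\in\GL(3)$ to arrange that the kernel and cokernel maps contract $\ell_i$ to $x_i$ and $\ell_i'$ to $y_i$ exactly; after that normalization your span argument applies verbatim.
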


To prepare for the proof of this theorem, we begin by constructing a cubic surface from $5$ points $\{(x_i,y_i)\}_{i=1}^5$ in general position. 
Recall that we can identify a point in $\PP^8$ with a $3 \times 3$ matrix $M$ via its vectorization 
$\textup{vec}(M)$. Under this identification, $\mathcal{D} \subset \PP^8$ is the cubic hypersurface $\mathcal{D} = \{ M \,:\, \det(M) = 0\}$. 
Let $\mathcal{N}(Z_5) \subset \PP^8$ be the null space of $Z_5$. Since $\{(x_i,y_i)\}_{i=1}^5$ is in general position, $\rank(Z_5) = 5$ and so, $\mathcal{N}(Z_5) \cong 
\PP^3$. Therefore,  
\begin{align} \label{eq:model1}
    S := \mathcal{N}(Z_5) \cap \mathcal{D}
\end{align}
is a cubic surface in 
$\mathcal{N}(Z_5) \cong \PP^3 \subset \PP^8$. This surface is smooth by the general position assumption. (We note that if  $\{x_i\}$ and $\{y_i\}$ were related by a homography then $S$ would be degenerate (see Example \ref{ex:homography bad}).)

The surface $S$ has a natural {\em determinantal representation} (see Section~\ref{sec:cubic surface facts} for the definition): Pick a basis $M_0, \ldots, M_3$ of $\mathcal{N}(Z_5)$ and set 
\begin{align} \label{eq:matrix pencil}
    M(z) = z_0 M_0 + \cdots + z_3 M_3
\end{align}
where $z_i \in \CC$. If $(x_i,y_i),i=1,\ldots,5$ are real, we can choose $M_0, \ldots, M_3$ to be real matrices. 
Since $M(z)$ is a parameterization of $\mathcal{N}(Z_5)$, $S$ is cut out by $\det(M(z)) = 0$, and $M(z)$ is a determinantal representation of $S$.
Generically, the $3$-dimensional $\mathcal{N}(Z_5)$ does not intersect the $4$-dimensional $\PP^2 \times \PP^2$ of rank one matrices, and so a point $M(z)$ on $S$ has rank exactly $2$. This also follows from $S$ being smooth.

Next we construct a Schl\"afli double six  on the cubic surface $S = \mathcal{N}(Z_5) \cap \mathcal{D}$, from $\{(x_i,y_i)\}_{i=1}^5$. 

\begin{defn} Given $\{(x_i,y_i)\}_{i=1}^5$ in general position, define 
\begin{align}
    \ell_{x_i} := & \{ M(z) \,:\,  M(z) x_i= 0 \} \textup{ for } i = 1, \ldots,5, \textup{ and } \\
    \ell_{y_j}' := & \{ M(z) \,:\, y_j^\top M(z) = 0 \} \textup{ for } j=1,\ldots,5,
\end{align}
where $M(z)$ is of the form \eqref{eq:matrix pencil}.
\end{defn}

For each $i=1,\ldots,5$, $\ell_{x_i} \subset \mathcal{N}(Z_5)$ by the definition of $M(z)$ in \eqref{eq:matrix pencil}. Further, since  $ M(z) x_i =0$,  $M(z)$ has a non-trivial right null space and hence,  $\det(M(z))=0$. Therefore, 
$\ell_{x_i} \subset S = \mathcal{N}(Z_5) \cap \mathcal{D}$. 
The condition that $M(z) \in \mathcal{N}(Z_5)$ is equivalent to $M(z)$ satisfying the $5$ linear constraints 
\begin{align}
    \langle x_i^\top \otimes y^\top_i, M(z) \rangle = y_i^\top M(z) x_i = 0 \textup{ for } i=1,\ldots, 5.
\end{align}
Therefore, $\ell_{x_i}$ is cut out by $8$ linear constraints including the $3$ constraints given by $ M(z) x_i= 0$.
However one of them is redundant since $ M(z) x_i = 0$ implies that $y_i^\top M(z) x_i = 0$. Therefore, $\ell_{x_i}$ is cut out by $7$ independent linear constraints and is hence a line on the cubic surface $S$. Similarly, 
each $\ell_{y_j}', j=1,\ldots,5$  is a line on $S$.

\begin{lem} \label{lem:5 pairs give a Sd6}
The lines $\ell_{x_i}$ and $\ell_{y_j}'$ satisfying the following incidence relations:
\begin{enumerate}
    \item The lines $\{ \ell_{x_i}, i=1,\ldots,5\}$ are mutually skew.
    \item The lines $\{\ell_{y_j}', j = 1, \ldots, 5\}$ are mutually skew. 
    \item $\ell_{x_i}$ intersects $\ell_{y_j}'$ if and only if $i \neq j$.
\end{enumerate}
Therefore,  $\{ \ell_{x_i}, i=1,\ldots,5\}$ and  $\{\ell_{y_j}', j = 1, \ldots, 5\}$ form part of a unique Schl\"afli double six on $S$.
\end{lem}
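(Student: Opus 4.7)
The plan is to verify the three incidence relations by linear algebra in $\PP^8$ and then invoke the classical theory of cubic surfaces to extend to a double six. Throughout the argument the general position hypothesis is used to exclude rank-one matrices from $\mathcal{N}(Z_5)$ and to guarantee that the linear systems that arise attain the expected codimension.

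For mutual skewness of the $\ell_{x_i}$, I would argue as follows. If $M \in \ell_{x_i} \cap \ell_{x_j}$ with $i \neq j$, then both $x_i$ and $x_j$ lie in $\ker M$. Since no three of the $x$'s are collinear, the two points are distinct, so $\ker M$ has dimension at least $2$ and $\rank M \leq 1$. This places $M$ in $\mathcal{N}(Z_5) \cap (\PP^2 \times \PP^2)$, which is empty in general position by the dimension count $3 + 4 < 8$ already recorded in the excerpt. The same argument applied to $M^\top$ gives mutual skewness of the $\ell_{y_j}'$.

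For the cross incidences, I would count conditions in $\CC^9$. The three rows of $Mx_i = 0$ together with the three rows of $y_j^\top M = 0$ are six linear forms, with one syzygy since $y_j^\top(Mx_i) = (y_j^\top M)x_i$, so they span a $5$-dimensional subspace of $(\CC^9)^*$. The five linear forms $y_k^\top M x_k$, $k=1,\dots,5$, cutting out $\mathcal{N}(Z_5)$ must then be added, and one tracks which are implied. When $i = j$, the form $y_i^\top M x_i$ is implied, leaving four new conditions for a total of nine; generically the only solution is $M = 0$, so $\ell_{x_i} \cap \ell_{y_i}' = \emptyset$ in $\PP^8$. When $i \neq j$, both $y_i^\top M x_i$ and $y_j^\top M x_j$ are implied, leaving three new conditions and a total of eight; a one-parameter family of solutions in $\CC^9$ yields a unique point in $\PP^8$, automatically in $\mathcal{N}(Z_5) \cap \mathcal{D} = S$ since $Mx_i = 0$ forces $\det M = 0$.

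Finally I would extend the configuration to a full Schl\"afli double six. By the facts gathered in Appendix~\ref{sec:cubic surface facts}, five mutually skew lines on a smooth cubic surface extend uniquely to a set of six mutually skew lines, and once one half of a double six is fixed the complementary half is determined. Applying this to $\{\ell_{x_i}\}_{i=1}^5$ and $\{\ell_{y_j}'\}_{j=1}^5$ produces sixth lines $\ell_{x_6}, \ell_{y_6}'$ that complete the configuration to a unique Schl\"afli double six on $S$. The main obstacle is the non-emptiness in the $i \neq j$ case: the linear-algebra count only bounds intersection dimension from below, so one must argue that under general position the eight linear forms are genuinely independent rather than collapsing to a smaller rank. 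Any such collapse would force either an unexpected rank-one matrix in $\mathcal{N}(Z_5)$ or a forbidden skew-incidence in $S$; ruling these out via the general position hypothesis (or, if preferred, by a semi-continuity argument seeded by an explicit generic witness) is the technical heart of the proof.
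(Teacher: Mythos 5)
Your overall architecture matches the paper's: establish the three incidence relations by dimension counting in $\PP^8$, then invoke the classical rigidity of double sixes to get the unique completion. The paper itself outsources the incidence count to \cite{chiral}*{Lemma 6.1} and finishes by citing that a double six is determined by three of its line pairs (Lemma~\ref{lem:cubic surface facts 1}(3)); you carry out the count explicitly, which is a welcome addition, and your skewness argument (two points in the kernel force $\rank M\le 1$, hence $M\in\mathcal{N}(Z_5)\cap(\PP^2\times\PP^2)$, which is excluded because $S$ is smooth) is exactly right.

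One correction to your self-diagnosis, though. You identify the non-emptiness of $\ell_{x_i}\cap\ell_{y_j}'$ for $i\ne j$ as ``the technical heart,'' but that is the automatic direction: your own count shows the eleven linear forms cutting out this intersection in $\CC^9$ span a subspace of dimension \emph{at most} $8$ (three syzygies among $Mx_i=0$, $y_j^\top M=0$, and the five forms $y_k^\top Mx_k$), so a nonzero solution, hence a point of $\PP^8$, always exists --- no independence needs to be verified for existence, and such a point automatically lies on $S$. The steps that genuinely consume the general-position hypothesis are the opposite ones: that the $\ell_{x_i}$ are honest pairwise-disjoint \emph{lines} (your smoothness argument handles this), and that $\ell_{x_i}\cap\ell_{y_i}'=\emptyset$, for which your nine forms must actually be independent. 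For the latter you write only ``generically the only solution is $M=0$,'' which is a genericity assertion rather than a deduction from the stated hypotheses; this is the one point where your argument, like the paper's, ultimately rests on the reference rather than on a self-contained verification. Finally, in the completion step you use that five mutually skew lines extend uniquely to a sixer and that a sixer determines its complementary half; that is true, but note you still need $\ell_6$ to meet each $\ell_{y_j}'$, $j\le 5$, to conclude the two completed sixers pair up into a single double six --- the paper's route via ``three line pairs determine the double six'' closes this more directly.
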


\begin{proof}
The proof of the incidence relations follows from dimension counting and can be found in \cite{chiral}*{Lemma~6.1}
The two sets of lines are part of a unique Schl\"afli double six on $S$  since each double six is already uniquely defined by three of its line pairs, and we have $5$ line pairs here with the correct incidences (see Lemma~\ref{lem:cubic surface facts 1} (3)).
\end{proof}


By Lemma \ref{lem:cubic surface facts 1}, given a Schl\"afli double six on $S$, there are two sets of $6$ points 
$\{p_i\} \subset \PP^2_p$ and $\{q_j\} \subset \PP^2_q$ such that $S$ is the blow up of $\PP^2_p$ at $\{p_i\}$ with blow up morphism $\pi_p \,:\, S \rightarrow \PP^2_p$,  and  also the blow up of $\PP^2_q$ at $\{q_j\}$ with blow up morphism $\pi'_q \,:\, S \mapsto \PP^2_q$. Each set of skew lines in the given double six is the set of exceptional curves of these 
point sets under $\pi_p$ and $\pi'_q$. 
The blow up morphism $\pi_p \,:\, S \rightarrow \PP^2_p$ sends $M(z) \in S$ to the unique $p \in \PP^2$ in its right null space and the blow up morphism $\pi'_q \,:\, S \rightarrow \PP^2_q$ sends $M(z) \in S$ to the unique $q \in \PP^2$ in its left null space. See the second half of Appendix~\ref{sec:cubic surface facts}. 

We are now ready to prove the forward direction of Theorem~\ref{thm:k=6 central theorem}. 


\begin{defn}\label{def:6th line pair}
Let $\ell_6, \ell_6'$ denote the $6$th pair of lines in the Schl\"afli double six on $S$ consisting of $\{ \ell_{x_i} \}_{i=1}^5$ and $\{\ell_{y_j}'\}_{j=1}^5$ 
from Lemma~\ref{lem:5 pairs give a Sd6}.
\end{defn}

\begin{lem} \label{lem:Sd6 characterization of rank drop}
Suppose $\{(x_i,y_i)\}_{i=1}^5$ is in general position. Then the addition of a new row $x^\top_6 \otimes y^\top_6$ makes $Z_6$ rank deficient if and only if
 $\{\ell_{x_i}\}_{i=1}^6$ and $\{\ell_{y_j}'\}_{j=1}^6$ form a Schl\"afli double six on $S$ where 
\begin{align}
\ell_6=\ell_{x_6} = \{ M(z) \,:\,  M(z) x_6 = 0 \}\quad\textup{ and }\quad 
\ell'_6=\ell_{y_6}' = \{M(z) \,:\, y_6^\top M(z) = 0 \}.
\end{align}
\end{lem}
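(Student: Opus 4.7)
The plan is to translate rank deficiency of $Z_6$ into the vanishing of a single linear form on $\mathcal{N}(Z_5) \cong \PP^3$, and then to exploit the incidence geometry of the Schl\"afli double six on $S$. Since $(x_6 \otimes y_6)^\top \operatorname{vec}(M) = y_6^\top M x_6$, the matrix $Z_6$ is rank deficient if and only if the linear functional $\phi(M) := y_6^\top M x_6$ vanishes identically on $\mathcal{N}(Z_5)$. With this reformulation in hand, the lemma reduces to an equivalence between the vanishing of $\phi$ and the twelve lines $\{\ell_{x_i}\}$, $\{\ell_{y_j}'\}$ forming a double six.

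For the reverse implication I would start from the double six and show $\phi \equiv 0$. The sixth paired lines $\ell_{x_6}$ and $\ell_{y_6}'$ both lie in $\mathcal{N}(Z_5)$ and are skew by Definition~\ref{def:SD6}. The form $\phi$ automatically vanishes on $\ell_{x_6}$, because $M x_6 = 0$ there forces $y_6^\top M x_6 = 0$, and it vanishes on $\ell_{y_6}'$ by the symmetric argument. Since any two skew lines in $\PP^3$ span the ambient $\PP^3$, $\phi$ must vanish identically on $\mathcal{N}(Z_5)$, and so $Z_6$ is rank deficient.

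For the forward implication, assuming $\phi \equiv 0$, the image of the linear map $\mathcal{N}(Z_5) \to \CC^3$, $M \mapsto M x_6$, is contained in the two-dimensional subspace $y_6^\perp$, so this map has rank at most $2$. Its kernel $\ell_{x_6}$ therefore has projective dimension at least $1$ in $\mathcal{N}(Z_5) \cong \PP^3$; under the general position hypothesis it will be exactly a line. Every $M \in \ell_{x_6}$ is then a rank-$2$ matrix with right null space $\langle x_6 \rangle$, so $\pi_x$ collapses $\ell_{x_6}$ to the single point $x_6$, identifying $\ell_{x_6}$ with one of the six exceptional curves of $\pi_x$ on $S$. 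Because $x_6 \notin \{x_1,\ldots,x_5\}$ outside the inherited rank-drop strata, the only possibility is $\ell_{x_6} = \ell_6$, the sixth exceptional curve completing the partial double six from Lemma~\ref{lem:5 pairs give a Sd6}. A symmetric argument on the left null space gives $\ell_{y_6}' = \ell_6'$, and the twelve lines together form the Schl\"afli double six on $S$.

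The main obstacle I anticipate is controlling the dimension of $\ell_{x_6}$ in the forward direction. A priori the rank of $M \mapsto M x_6$ could drop further from $2$ to $\leq 1$ on $\mathcal{N}(Z_5)$, in which case $\ell_{x_6}$ would be a plane (or all of $\mathcal{N}(Z_5)$) rather than a clean line, and the identification with a unique exceptional curve would break. A codimension count using the general position of the first five pairs, together with the implicit assumption that we are not in an inherited rank-drop regime (so in particular $x_6 \neq x_i$ for $i \leq 5$), should rule out these degeneracies. Once $\ell_{x_6}$ is pinned down as a line, the rest of the argument is a formal consequence of the cubic-surface facts gathered in Appendix~\ref{sec:cubic surface facts}.
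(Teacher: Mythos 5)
Your reformulation is correct: since $(x_6^\top\otimes y_6^\top)\operatorname{vec}(M)=y_6^\top M x_6$ and $\rank(Z_5)=5$, rank deficiency of $Z_6$ is equivalent to the linear form $\phi(M)=y_6^\top M x_6$ vanishing identically on $\mathcal{N}(Z_5)$. Your reverse direction is a genuinely different, and cleaner, argument than the paper's: the paper deduces it from the uniqueness of the sixth point pair (Lemma~\ref{lem:6th point pair}) together with the forward direction and the blow-up morphisms, whereas you simply observe that $\phi$ vanishes on the two skew lines $\ell_{x_6}$ and $\ell_{y_6}'$, whose underlying $2$-dimensional subspaces span $\mathcal{N}(Z_5)\cong\PP^3$, so $\phi\equiv 0$. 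That is a purely linear-algebraic argument that avoids the cubic-surface machinery entirely in this direction. Your forward direction is close in spirit to the paper's: both identify $\ell_{x_6}$ and $\ell_{y_6}'$ with the sixth pair of lines in the unique double six determined by the first five pairs (Lemma~\ref{lem:5 pairs give a Sd6}); the paper does this by checking the incidence relations of the twelve lines directly, while you argue that $\pi_x$ contracts the line $\ell_{x_6}$ to the single point $x_6$, and a positive-dimensional fiber of a blow-up morphism must be an exceptional curve. Both routes are valid.

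The one loose end you flag --- that $\ell_{x_6}$ might a priori be a plane rather than a line --- closes more easily than by a codimension count: $\{M\in\mathcal{N}(Z_5): Mx_6=0\}$ is a linear subspace contained in $S$ (since $Mx_6=0$ forces $\det M=0$), and a smooth, hence irreducible, cubic surface contains no plane; combined with your observation that the image of $M\mapsto Mx_6$ lies in the $2$-dimensional space $\{v: y_6^\top v=0\}$, the kernel is exactly a line. The only degeneracy left implicit (in the paper's proof as well as yours) is ruling out $x_6=x_i$ or $y_6=y_j$ for some $i,j\le 5$, which is needed so that the twelve lines are genuinely distinct.
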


\begin{proof}
To prove the forward direction, suppose $Z_6$ is rank deficient. Then 
$\mathcal{N}(Z_5) = \mathcal{N}(Z_6)$ and 
$S = \mathcal{N}(Z_6) \cap \mathcal{D}$. Check that $\ell_{x_6}$ is a line 
on $S$ by the same argument as for $\ell_{x_i}, i \leq 5$. Similarly, 
$\ell_{y_6}'$ is a line on $S$. One can also check that the collection 
$\{\ell_{x_i}\}_{i=1}^6$ and $\{\ell_{y_j}'\}_{j=1}^6$ satisfies the incidence relations of a Schl\"afli double six. This must mean that $\ell_6 = \ell_{x_6}$ and 
$\ell_6' = \ell_{y_6}'$ since the first $5$ line pairs determine a unique double six on $S$. 

To prove the reverse direction, suppose we have a Schl\"afli double six on $S$ as in the statement of the lemma. Then by the discussion before Definition~\ref{def:6th line pair}, $S$ is the blow up of $\PP^2_x$ at $\{x_i\}$ under a morphism $\pi_x$ and $S$ is also the blow up of $\PP^2_y$ at $\{y_i\}$ under a morphism $\pi_y'$. Further, $\pi_x(\ell_{x_i}) = x_i$ for $i=1, \ldots, 6$ and 
$\pi'_y(\ell_{y_j}') = y_j$ for $j=1, \ldots, 6$. We need to argue that 
$Z_6 =(x^\top_i \otimes y^\top_i)_{i=1}^6$ is rank deficient. 
Recall Lemma \ref{lem:6th point pair} and let $(x_0,y_0)$ be the unique new point pair such that $Z'$ with rows $x_i^\top \otimes y_i^\top$ for $i=0,\ldots,5$ is rank deficient. Then, by the forwards direction, we must have $\ell_6$ be the exceptional curve of $x_0$ under $\pi_x$ and hence $x_0=\pi_x(\ell_6)=x_6$. Similarly, 
$y_0=\pi_y(\ell'_6)=y_6$. We conclude that $(x_0,y_0)=(x_6,y_6)$ and therefore, $Z_6$ is rank deficient.
\end{proof}

The following corollary to Lemma \ref{lem:Sd6 characterization of rank drop} and Theorem \ref{thm:summary of cubic surface facts}
concludes the proof of the forward direction of Theorem~\ref{thm:k=6 central theorem}.

\begin{cor}\label{cor:rank deficiency gives SD6}
If $\{(x_i, y_i)\}_{i=1}^6$ is in general position and $Z_6$ is rank deficient 
then the cubic surface 
$S= \mathcal{N}(Z_6) \cap \mathcal{D}$ is the blow up of  $\PP^2_x$ at $x_1, \ldots, x_6$ with exceptional curves $\ell_{x_i},i=1,\ldots,6$, and also the blow up of $\PP^2_y$ at 
$y_1, \ldots,y_6$ with exceptional curves $\ell_{y_j}',j=1,\ldots,6$. The two sets 
of lines form a Schl\"afli double six on $S$.
\end{cor}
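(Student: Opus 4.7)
The plan is to chain together the two cited results. Given that $Z_6$ is rank deficient, I first want to verify that $\mathcal{N}(Z_6) = \mathcal{N}(Z_5)$, so that the cubic surface associated to the 6 points agrees with the one associated to the first 5. Since $\{(x_i,y_i)\}_{i=1}^5 \subset \{(x_i,y_i)\}_{i=1}^6$ as rows of the matrix, we always have $\mathcal{N}(Z_6) \subseteq \mathcal{N}(Z_5)$. The general position assumption forces $\rank(Z_5)=5$, so $\mathcal{N}(Z_5)\cong \PP^3$; rank deficiency of $Z_6$ forces $\rank(Z_6)\le 5$, so the two null spaces have the same projective dimension and thus coincide. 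Consequently $S = \mathcal{N}(Z_6)\cap\mathcal{D} = \mathcal{N}(Z_5)\cap\mathcal{D}$ is the smooth cubic surface constructed from the first $5$ point pairs.

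Next I would invoke Lemma~\ref{lem:Sd6 characterization of rank drop} directly: under the hypotheses, the two families $\{\ell_{x_i}\}_{i=1}^{6}$ and $\{\ell_{y_j}'\}_{j=1}^{6}$ form a Schl\"afli double six on $S$. This gives the last sentence of the corollary for free.

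For the blow up statements I would apply Theorem~\ref{thm:summary of cubic surface facts} from the appendix, which says that any Schl\"afli double six on a smooth cubic surface $S\subset\PP^3$ determines two blow down morphisms $S\to\PP^2$, each contracting one of the two sextuples of mutually skew lines to six points in general position. Applied to the double six $\{\ell_{x_i}\},\{\ell_{y_j}'\}$, this produces blow up realizations $\pi\colon S\to\PP^2$ and $\pi'\colon S\to\PP^2$ whose exceptional curves are the $\ell_{x_i}$ and $\ell_{y_j}'$ respectively.

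The one thing left is to identify the target planes with $\PP^2_x$ and $\PP^2_y$ and to check that $\pi(\ell_{x_i})=x_i$ and $\pi'(\ell_{y_j}')=y_j$. Here I would use the intrinsic descriptions of the blow down morphisms recalled in the second half of the appendix: for a point $M(z)\in S$, the map $\pi$ sends $M(z)$ to its unique right null vector in $\PP^2$, while $\pi'$ sends it to its unique left null vector. By the definitions of $\ell_{x_i}$ and $\ell_{y_j}'$, every matrix in $\ell_{x_i}$ has $x_i$ in its right null space, and every matrix in $\ell_{y_j}'$ has $y_j$ in its left null space. Hence $\pi$ contracts $\ell_{x_i}$ to $x_i$ and $\pi'$ contracts $\ell_{y_j}'$ to $y_j$, which is exactly what the corollary asserts. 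The main (very mild) obstacle is verifying that this null-vector description of $\pi,\pi'$ really matches the abstract blow down given by Theorem~\ref{thm:summary of cubic surface facts}; this is already recorded in the appendix discussion, so the corollary is essentially an assembly of Lemma~\ref{lem:Sd6 characterization of rank drop} with the cubic surface dictionary.
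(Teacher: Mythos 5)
Your proposal is correct and follows essentially the same route as the paper, which presents this statement as an immediate consequence of Lemma~\ref{lem:Sd6 characterization of rank drop} together with the cubic-surface dictionary of Theorem~\ref{thm:summary of cubic surface facts}: the identification $\mathcal{N}(Z_6)=\mathcal{N}(Z_5)$, the double six from the lemma, and the right/left null-vector description of the two blow-down morphisms are exactly the ingredients the paper intends. The only point worth making explicit is that since each $\ell_{x_i}=\{M(z):M(z)x_i=0\}$ is a line on $S$, the point $x_i$ must be one of the six base points singled out by the determinantal representation, which pins down the target plane as $\PP^2_x$ and gives $\pi(\ell_{x_i})=x_i$ as you state.
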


Note that Lemma~\ref{lem:Sd6 characterization of rank drop} gives a 
second construction for the unique point pair 
$(x_6,y_6)$ in Lemma~\ref{lem:6th point pair}. Namely, as the images under the blow up morphisms, of the final pair of lines in the unique Schl\"afli double six on $S$ determined 
by the lines 
$\{\ell_{x_i}\}_{i=1}^5$ and $\{\ell_{y_j}'\}_{j=1}^5$. This was also noted in \cite{chiral}.

\medskip 

To prove the backward direction of Theorem~\ref{thm:k=6 central theorem}, we will first prove a counterpart to Lemma \ref{lem:6th point pair}.
\begin{lem}\label{lem:counterpart rational function}
If $x_1,\ldots,x_6$ are in general position then there is a unique set of $6$ points $y_1,\ldots,y_6$ in general position (up to homography) such that $Z_6$ is rank deficient. If we assume that
\begin{align*}
x_1=(1,0,0)=y_1\quad\quad x_2=(0,1,0)=y_2\\
x_3=(0,0,1)=y_3\quad\quad x_4=(1,1,1)=y_4
\end{align*}
then we can obtain $y_5,y_6$ via the rational function
\begin{align}
y_5=&(\frac{x_{63}-x_{62}}{x_{53}x_{62}-x_{63}x_{52}},\frac{x_{63}-x_{61}}{x_{53}x_{61}-x_{63}x_{51}},\frac{x_{62}-x_{61}}{x_{61}x_{52}-x_{51}x_{62}})\\
y_6=&(\frac{x_{53}-x_{52}}{x_{53}x_{62}-x_{63}x_{52}},\frac{x_{53}-x_{51}}{x_{53}x_{61}-x_{63}x_{51}},\frac{x_{52}-x_{51}}{x_{61}x_{52}-x_{51}x_{62}}).
\end{align}
\end{lem}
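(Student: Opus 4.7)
The plan is to combine the cubic-surface machinery developed in this section with Lemma~\ref{lem:6th point pair} to establish existence, uniqueness (up to homography on the $y$-side), and the explicit rational formulas in turn.

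For \emph{existence} and rank deficiency, I would blow up $\PP^2_x$ at the six $x_i$'s to obtain a smooth cubic surface $\mathcal{S}$ with mutually skew exceptional curves $E_1,\ldots,E_6$. By the classical theory of cubic surfaces collected in Appendix~\ref{sec:cubic surface facts}, these $(-1)$-curves form one half of a unique Schl\"afli double six on $\mathcal{S}$; let $E_1',\ldots,E_6'$ be the complementary half. Since the $E_i'$ are also six mutually skew $(-1)$-curves, they can be blown down via a morphism $\mathcal{S}\to\PP^2_y$ (unique up to choice of coordinates on $\PP^2_y$), sending $E_i'\mapsto y_i$; smoothness of $\mathcal{S}$ forces the $y_i$ to be in general position. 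To see that $Z_6$ is rank deficient for these $y_i$, I would apply Lemma~\ref{lem:Sd6 characterization of rank drop} to any five of the resulting six pairs: by construction the lines $\ell_{x_j},\ell_{y_k}'$ sitting inside $\mathcal{N}(Z_5)\cap\mathcal{D}$ already realize the Schl\"afli double six coming from $\mathcal{S}$, and so the sixth line pair $\ell_{x_6},\ell_{y_6}'$ must complete this double six, forcing $\rank(Z_6)<6$.

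For \emph{uniqueness up to homography}, suppose $\{y_i''\}_{i=1}^6$ is another configuration making $Z_6$ rank deficient. By Corollary~\ref{cor:rank deficiency gives SD6}, the cubic surface $\mathcal{N}(Z_6)\cap\mathcal{D}$ is simultaneously the blow-up of $\PP^2_x$ at the same six $x_i$'s and of $\PP^2_y$ at the $y_i''$'s. Since the blow-up of $\PP^2$ at six points in general position is determined up to isomorphism, this surface is isomorphic to $\mathcal{S}$; the isomorphism identifies the exceptional curves $E_i$ and hence, by uniqueness of the double six containing six mutually skew lines, identifies the complementary set $\{E_i'\}$. Blowing the $E_i'$ down then recovers $\{y_i''\}$ up to the $\PGL(3)$-ambiguity in the blow-down map, which is precisely a homography of $\PP^2_y$.

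For the \emph{explicit rational formulas}, after the stated fixing of the first four point pairs (which exhausts the $\PGL(3)$ freedom on each side), I would invert the rational map of Lemma~\ref{lem:6th point pair}. That lemma expresses $(x_6,y_6)$ as an explicit rational function of $(x_5,y_5)$; the formula for $x_6$ yields two projective equations in the two-dimensional unknown $y_5\in\PP^2$, which by the uniqueness argument above are uniquely solvable for $y_5$ in terms of $(x_5,x_6)$. Substituting the resulting $y_5$ into Lemma~\ref{lem:6th point pair}'s expression for $y_6$ then produces $y_6$ as a rational function of $(x_5,x_6)$. The closed-form formulas can be verified directly in Macaulay2 by substituting into $Z_6$ and checking that all $6\times 6$ minors vanish. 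The main obstacle is the algebraic bookkeeping in carrying out the inversion while tracking sign and projective-scaling conventions against the stated expressions; conceptually, nothing new is required beyond the cubic-surface picture already in place, since Lemma~\ref{lem:6th point pair} guarantees the map $(x_5,y_5)\mapsto(x_6,y_6)$ is birational onto the rank-deficiency locus, so the desired inverse rational map exists and is unique.
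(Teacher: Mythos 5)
Your proposal matches the paper's proof in its two essential steps: the explicit formulas are established by substituting into $Z_6$ and checking in Macaulay2 that all $6\times 6$ minors vanish, and uniqueness up to homography follows from Corollary~\ref{cor:rank deficiency gives SD6} together with the fact that six mutually skew lines determine a unique Schl\"afli double six, so that the two blow-down morphisms to $\PP^2_y$ differ by a homography. The supplementary geometric existence argument you sketch is not needed once the Macaulay2 check is in place, and as written it has a soft spot: the claim that the lines $\ell_{x_j},\ell_{y_k}'$ inside $\mathcal{N}(Z_5)\cap\mathcal{D}$ ``already realize'' the double six of the abstract blow-up $\mathcal{S}$ is not automatic --- it requires identifying the determinantal surface with $\mathcal{S}$ compatibly with \emph{both} blow-down maps, which is essentially equivalent to what is being proved (and Corollary~\ref{cor:Sd6 gives rank deficiency}, which would supply exactly this, is deduced in the paper \emph{from} the present lemma, so invoking it here would be circular). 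Deriving the formulas by inverting the rational map of Lemma~\ref{lem:6th point pair} is a legitimate alternative to simply stating them, but in either case the final symbolic verification is what carries the burden of proof.
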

\begin{proof}
We first prove that this construction will result in rank deficiency; the uniqueness will come afterwards as a consequence of Corollary \ref{cor:rank deficiency gives SD6}. We construct the matrix $Z_6=(x^\top_i\otimes y^\top_i)_{i=1}^6$ and fix $x_1,\ldots x_4,y_1,\ldots y_6$ as in the statement of the Lemma. We can then verify with Macaulay2 that all $6\times 6$ minors of $Z_6$ are $0$ and therefore $Z_6$ is rank deficient.

To see that this choice is unique, construct $S=\mathcal{N}(Z_6)\cap\mathcal{D}$. Then by Corollary \ref{cor:rank deficiency gives SD6}, $S$ is both the blow up of $\PP^2_x$ at $x_1,\ldots,x_6$ and the blow up of $\PP^2_y$ at $y_1,\ldots,y_6$ and the two sets of exceptional curves $\{\ell_{x_1},\ldots,\ell_{x_6}\},\{\ell'_{y_1},\ldots,\ell'_{y_6}\}$ form a Schl\"afli double six. Now suppose there was some other $y_1',\ldots,y_6'$ in general position such that $Z'_6=(x_i^{\top} \otimes {y'_i}^\top)_{i=1}^6$ was rank deficient.  Then $S'=\mathcal{N}(Z'_6)\cap\mathcal{D}$ is also the blow up of $\PP^2_x$ at $x_1,\ldots,x_6$. Moreover, it is also the blow up of $\PP^2_y$ at $y_1',\ldots,y_6'$ and the exceptional curves $\ell_{x_1},\ldots,\ell_{x_6},\ell'_{y'_1},\ldots,\ell'_{y'_6}$ form a Schl\"afli double six. It follows by Lemma \ref{lem:double six facts} that $\ell'_{y_i}=\ell'_{y'_i}$ for all $i$. If we compose the blow up morphisms $\pi_{y'}\circ\pi_y^{-1}:\PP^2_y\to\PP^2_y$ we obtain an invertible projective transformation; thus this composition is a homography taking $y_i\mapsto y_i'$. This concludes the proof. 
\end{proof}

The formula shows that if $x_1,\ldots,x_6$ are real then, up to homography, $y_1,\ldots,y_6$ will also be real.

\begin{ex}\label{ex:rational formula part two}
Consider Example \ref{ex:rational formula part one} again and now fix the points
\begin{align*}
x_1&=(1,0,0)\quad x_2=(0,1,0) &\quad\quad y_1&=(1,0,0)
\quad y_2=(0,1,0)\\
x_3&=(0,0,1)\quad x_4=(1,1,1) &\quad\quad
y_3&=(0,0,1)\quad y_4=(1,1,1)\\
x_5&=(3,5,1)\quad x_6=(-\frac{1}{3},\frac{7}{5},\frac{3}{17}).
\end{align*}
If we leave $y_5,y_6$ symbolic and construct the matrix $Z_6$, we can verify with Macaulay2 that the unique new points that will result in rank deficiency are $y_5\sim(8,2,1),~ y_6\sim(-\frac{4}{3},\frac{2}{5},-\frac{1}{17})$, which is exactly what we get from the formula above. Moreover, check that this matches up exactly with the points in Example \ref{ex:rational formula part one}.
\end{ex}

The following corollary now completes the reverse direction of Theorem~\ref{thm:k=6 central theorem}.

\begin{cor} \label{cor:Sd6 gives rank deficiency}
Given a smooth cubic surface $\mathcal{S}$ with a Schl\"afli double six $\{\ell_i\}_{i=1}^6$ and 
$\{\ell_j'\}_{i=1}^6$ that arises as the exceptional lines of $x_1, \ldots, x_6 \in \PP^2_x$ and $y_1,\ldots y_6 \in \PP^2_y$, the matrix $Z_6 = (x_i^\top \otimes y_i^\top)_{i=1}^6$ is rank deficient.
\end{cor}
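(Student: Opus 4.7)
The plan is to deduce this corollary from Lemma \ref{lem:counterpart rational function} (existence and uniqueness, up to a homography of $\PP^2_y$, of a $y$-configuration producing rank deficiency for given generic $x_i$'s), Corollary \ref{cor:rank deficiency gives SD6} (the forward direction: rank deficiency produces a double six via blow-ups), and Lemma \ref{lem:rank drop is invariant under homographies} (homography invariance of rank).

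First, I would observe that because the $\ell_i$'s are six mutually skew lines on a smooth cubic that realize the exceptional curves of the blow-up of $\PP^2_x$ at $\{x_i\}$, the points $x_1, \ldots, x_6$ must be in general position (if they were not, the blow-up would fail to be a smooth cubic, or two of the exceptional curves would intersect; see the cubic surface facts in Appendix \ref{sec:cubic surface facts}). Similarly, $y_1, \ldots, y_6$ are in general position.

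Next, I would invoke Lemma \ref{lem:counterpart rational function} applied to the generic configuration $x_1, \ldots, x_6$: there exists a configuration $y_1', \ldots, y_6'$ in $\PP^2_y$, in general position, such that the matrix $Z_6' = (x_i^\top \otimes (y_i')^\top)_{i=1}^6$ is rank deficient, and this configuration is unique up to the action of $\PGL(3)$ on $\PP^2_y$. Applying Corollary \ref{cor:rank deficiency gives SD6} to the rank-deficient configuration $\{(x_i, y_i')\}_{i=1}^6$, the cubic surface $S' := \mathcal{N}(Z_6') \cap \mathcal{D}$ is simultaneously the blow-up of $\PP^2_x$ at $x_1, \ldots, x_6$ and of $\PP^2_y$ at $y_1', \ldots, y_6'$, with the two sets of exceptional curves forming a Schl\"afli double six on $S'$.

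Now I would use uniqueness to compare $\mathcal{S}$ (the given surface) and $S'$. Both are blow-ups of $\PP^2_x$ at the same six points, so they are isomorphic as abstract surfaces, and under this isomorphism the exceptional curves match. This means $\mathcal{S}$ carries a double six with the $\ell_i$'s being the exceptional curves over $x_i$, and its partners $\ell_i''$ descend to a second six-point configuration on $\PP^2_y$; by the uniqueness clause of Lemma \ref{lem:counterpart rational function}, this configuration is homographic to $y_1', \ldots, y_6'$. By hypothesis it must also equal $y_1, \ldots, y_6$ (up to labelling), so there is $H \in \PGL(3)$ with $H y_i' = y_i$ for all $i$. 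Finally, Lemma \ref{lem:rank drop is invariant under homographies} gives $\rank(Z_6) = \rank(Z_6') < 6$.

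The main obstacle is the uniqueness step: I must argue carefully that the double six on $\mathcal{S}$ matches the canonical one on $S'$ under the $\PP^2_x$-blow-up identification, so that the $y_i$'s really are homographic to $y_i'$'s rather than to some other five-point-pair-plus-sixth-pair configuration. This is handled by noting that a double six is determined by any three of its line pairs (Lemma \ref{lem:cubic surface facts 1}(3)), so once five pairs $(\ell_{x_i}, \ell_{y_i}')$ match across the two surfaces, the sixth must as well, pinning down $y_6'$ uniquely and concluding the proof.
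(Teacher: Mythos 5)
Your proposal is correct and follows essentially the same route as the paper: it invokes Lemma~\ref{lem:counterpart rational function} to produce a rank-deficient configuration $\{(x_i,y_i')\}$, identifies the given surface $\mathcal{S}$ with $\mathcal{N}(Z_6')\cap\mathcal{D}$ via the common blow-up of $\PP^2_x$, deduces that $\{y_i\}$ and $\{y_i'\}$ are related by a homography, and concludes with Lemma~\ref{lem:rank drop is invariant under homographies}. The only difference is that you spell out the double-six matching step that the paper compresses into ``following along the same lines as the proof of Lemma~\ref{lem:counterpart rational function}.''
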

\begin{proof}
Lemma \ref{lem:counterpart rational function} provides us with unique (up to homography) points $y_1',\ldots,y_6'\in\PP^2_y$ such that $Z'_6=(x_i^\top \otimes {y'}^\top_i)_{i=1}^6$ is rank deficient. Then, following along the same lines as the proof of Lemma \ref{lem:counterpart rational function}, $S=\mathcal{N}(Z'_6)\cap\mathcal{D}$ and the points $\{y_i\}$ and $\{y_i'\}$ must be related by homography. It follows by Lemma \ref{lem:rank drop is invariant under homographies} that $Z_6$ is rank deficient.
\end{proof}

Combining Corollaries \ref{cor:rank deficiency gives SD6} and \ref{cor:Sd6 gives rank deficiency} we obtain a method to compute a determinantal representation of a smooth cubic surface starting with a Schl\"afli double six on it.

\begin{cor} \label{cor:alt det rep} 
Let $\mathcal{S}$ be a smooth cubic surface  with a Schl\"afli double six, $\{\ell_i\}$ and 
$\{\ell_j'\}$, and let $\{(x_i,y_i) \}_{i=1}^6 \subset \PP^2 \times \PP^2$ be such that 
$\ell_i$ are the exceptional lines of the blow up of $\PP^2_x$ at $x_1, \ldots, x_6$ and $\ell_6'$ are the exceptional lines of the blow up of $\PP^2_y$ at $y_1,\ldots y_6$. Then a determinantal representation of $\mathcal{S}$ is given by $M(z) = \sum z_iM_i$ where 
$M_0,\ldots,M_3$ is a basis of the null space of $Z_6 = (x^\top_i \otimes y^\top_i)_{i=1}^6$.
\end{cor}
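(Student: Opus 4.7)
The plan is to combine the two preceding corollaries with the parameterization of the null space. First, note that since $\{\ell_i\}, \{\ell_j'\}$ form a Schl\"afli double six on a smooth cubic surface $\mathcal{S}$ and arise as exceptional curves of blow ups of $\PP^2_x$ at $\{x_i\}$ and $\PP^2_y$ at $\{y_i\}$, the point sets $\{x_i\}$ and $\{y_i\}$ are in general position (this is part of the standard dictionary between cubic surfaces and blow ups of $\PP^2$ at six points, see Lemma~\ref{lem:cubic surface facts 1} / Appendix~\ref{sec:cubic surface facts}). In particular, no five of the point pairs $(x_i,y_i)$ admit a rank drop for $Z_5$, so by general position every $5$-row submatrix of $Z_6$ has rank $5$.

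Next, apply Corollary~\ref{cor:Sd6 gives rank deficiency} to conclude that $Z_6$ is rank deficient. Combined with the previous observation, this forces $\rank(Z_6) = 5$, hence $\mathcal{N}(Z_6) \cong \PP^3$ and the four matrices $M_0, \ldots, M_3$ form a basis of $\mathcal{N}(Z_6)$. The matrix pencil $M(z) = \sum_i z_i M_i$ therefore parameterizes $\mathcal{N}(Z_6)$, and its intersection with the determinantal hypersurface $\mathcal{D} = \{\det = 0\} \subset \PP^8$ is cut out in $\PP^3$ by the single cubic equation $\det M(z) = 0$.

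Now apply Corollary~\ref{cor:rank deficiency gives SD6}: the cubic surface $\mathcal{N}(Z_6)\cap \mathcal{D}$ is the blow up of $\PP^2_x$ at $x_1, \ldots, x_6$ (and of $\PP^2_y$ at $y_1, \ldots, y_6$), with the same Schl\"afli double six on it arising from the lines $\ell_{x_i}$ and $\ell_{y_j}'$. Since the anticanonical embedding of the blow up of $\PP^2$ at six points in general position into $\PP^3$ is unique up to a projective change of coordinates, the cubic $\mathcal{N}(Z_6)\cap\mathcal{D}$ is projectively equivalent to $\mathcal{S}$. Thus $\det M(z) = 0$ is a determinantal representation of $\mathcal{S}$.

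The main (essentially only) subtlety is the identification of the two cubic surfaces: $\mathcal{S}$ as originally embedded in its ambient $\PP^3$, and $\mathcal{N}(Z_6)\cap\mathcal{D}$ as embedded in the ambient $\mathcal{N}(Z_6)\cong\PP^3$. Both realize the same abstract blow up with the same Schl\"afli double six, so they coincide up to a projective change of coordinates on $\PP^3$, which is all that is required for a determinantal representation. Everything else is a direct assembly of Corollaries~\ref{cor:rank deficiency gives SD6} and~\ref{cor:Sd6 gives rank deficiency}.
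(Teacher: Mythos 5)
Your proposal is correct and follows the same route as the paper, which simply combines Corollary~\ref{cor:Sd6 gives rank deficiency} (to get rank deficiency of $Z_6$) with Corollary~\ref{cor:rank deficiency gives SD6} (to identify $\mathcal{N}(Z_6)\cap\mathcal{D}$ as the blow up carrying the given double six). The extra paragraph you add on identifying $\mathcal{S}$ with $\mathcal{N}(Z_6)\cap\mathcal{D}$ up to a projective change of coordinates is a point the paper leaves implicit, and it is handled correctly.
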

This gives us the final statement of Theorem \ref{thm:k=6 central theorem}, concluding the proof.

\begin{rem}In the classical literature one finds a determinantal representation of a cubic surface from $6$ points in a $\PP^2$ (whose blow up at these points is the surface), via the Hilbert-Burch theorem (see Appendix~\ref{sec:cubic surface facts}). It is worthwhile to contrast that method with that in Corollary~\ref{cor:alt det rep} where the starting point is 
$6$ point pairs in $\PP^2 \times \PP^2$. Then a determinantal representation \eqref{eq:matrix pencil} of the surface can be obtained from a basis of $\mathcal{N}(Z_6)$ using simple linear algebra.
\end{rem}

\subsubsection{Conic intersections and the bracket ideal $\mathcal{B}_{rackets}(Z_6)$} 

We now use Theorem \ref{thm:k=6 central theorem} to explain the ideal $\mathcal{B}_{rackets}(Z_6)$ used in the algebraic characterization of rank deficiency in Theorem \ref{thm:k=6 neither side in a line}.

Consider a Schl\"afli double six $\{\ell_i\}_{i=1}^6,\{\ell_j'\}_{j=1}^6$ on a cubic surface $\mathcal{S}$ where the lines $\ell_i$ are the exceptional lines under the blow up $\pi_x$ of points $x_i \in \PP^2_x$ and the lines $\ell'_j$ are the exceptional lines under the blow up $\pi'_y$ of points $y_j \in \PP^2_y$. Let $C_i$ be the unique conic passing through $\{x_1, \ldots ,x_6\}\backslash\{x_i\}$ and $C'_i$ the unique conic passing through $\{y_1, \ldots ,y_6\}\backslash\{y_i\}$. By Lemma \ref{lem:cubic surface facts 1}, the lines $\ell_i$ are the strict transforms of the conics $C_i'\subset\PP^2_y$ and and the lines $\ell_j'$ are the strict transforms of the conics $C_j\subset\PP^2_x$. There is then a birational automorphism $\PP^2_x\to S\to\PP^2_y$ such that $x_i\mapsto C_i'$ and $C_i\mapsto y_i$. This map is a degree $5$ Cremona transformation and can be formalized using the following recipe due to Rudolf Sturm \cite{sturm}, mentioned by Werner in \cite{wernerfivepoints}. These 
conics can be used to give a third construction of the unique 
$6$th point pair $(x_6,y_6)$ that can be added to 
$\{(x_i,y_i)\}_{i=1}^5$ in general position so that $Z_6$ is rank deficient. We state the result without proof in Lemma~\ref{lem:Werner conics} and illustrate it in 
Figure~\ref{fig:Werner conics}. The data for this figure was chosen to make the conics easily visible. 

\begin{lem} \cite{sturm} \label{lem:Werner conics}
Let $C$ be the unique conic through $x_1, \ldots, x_5$ in general position, and let $H_{i}$ be the unique 
homography such that $y_j = H_i x_j$ for all $j \neq i$. Define $C_i' := H_i(C)$ for $i=1,\ldots,5$. Then 
$y_6 \in \bigcap_{i=1}^5 C_i'$ is the unique point in the intersection of the conics $C_i'$.

Similarly, let $C'$ be the unique conic through $y_1, \ldots, y_5$, and let $G_{j}$ be the unique homography such that $x_i = G_j y_i$ for all $i \neq j$. Define $C_j := H_j(C')$ for $j=1,\ldots,5$. Then $x_6 \in \bigcap_{j=1}^5 C_j$ is the unique point in the intersection of the conics $C_j$.
\end{lem}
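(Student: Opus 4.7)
By the symmetry of the two assertions under interchange of $\{x_i\}$ and $\{y_i\}$, it suffices to prove the first: that $y_6$ lies in $\bigcap_{i=1}^5 C'_i$ and is the unique common intersection, where $(x_6,y_6)$ is the unique completion provided by Lemma~\ref{lem:6th point pair}. The overall strategy is to reinterpret the conic $C'_i := H_i(C)$ defined in the lemma as the conic through $\{y_j : j\neq i,\,j \leq 6\}$, using the cubic surface machinery of Theorem~\ref{thm:k=6 central theorem}.

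\medskip

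The first step is to invoke Lemma~\ref{lem:6th point pair} to produce $(x_6,y_6)$ and then Theorem~\ref{thm:k=6 central theorem} to build the cubic surface $S = \mathcal{N}(Z_6)\cap\mathcal{D}$ as the simultaneous blow-up of $\PP^2_x$ at $\{x_i\}_{i=1}^6$ and of $\PP^2_y$ at $\{y_j\}_{j=1}^6$, with exceptional lines $\ell_{x_i}, \ell'_{y_j}$ forming a Schl\"afli double six. Let $\varphi := \pi'_y \circ \pi_x^{-1}$ be the associated Cremona transformation. By the discussion preceding the lemma, the exceptional line $\ell'_{y_j}$ is the strict transform of the conic through $\{x_k : k\neq j\}$ in $\PP^2_x$; applied to $j=6$, this shows that $C$ (the conic through $x_1,\ldots,x_5$) has strict transform $\ell'_{y_6}$, and hence $\varphi$ contracts $C$ to $y_6$.

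\medskip

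The heart of the proof is the claim that $H_i(C)$ equals the conic $\widetilde{C}'_i$ through $\{y_j : j\neq i,\,j \leq 6\}$. Both are conics in $\PP^2_y$ sharing the four points $y_j = H_i x_j$ for $j \in \{1,\ldots,5\}\setminus\{i\}$, and hence lie in the pencil cut out by these four base points. Two distinct members of such a pencil meet only in the four base points (B\'ezout), so $H_i(C) = \widetilde{C}'_i$ if and only if they share a fifth point, namely $y_6$. Thus the lemma reduces to proving $y_6 \in H_i(C)$ for each $i$. Conceptually this holds because $H_i$ is the unique degree-one map agreeing with $\varphi$ on four of the five marked points of $C$, and $\varphi$ contracts $C$ to $y_6$; the rigorous verification I propose is symbolic. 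Using Lemma~\ref{lem:rank drop is invariant under homographies} and Corollary~\ref{cor:assumptions}, fix $x_j = y_j$ for $j=1,2,3,4$ and leave $(x_5,y_5)$ symbolic; then each $H_i$ has an explicit rational description ($H_i$ is the identity for $i \in \{1,2,3,4\}$), the conic $C$ has an explicit defining equation in $(x_5,y_5)$, and $y_6$ is given by the rational formula of Lemma~\ref{lem:6th point pair}. A Macaulay2 check certifies $y_6 \in H_i(C)$ for each $i=1,\ldots,5$.

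\medskip

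Once the claim $H_i(C) = \widetilde{C}'_i$ is established, i.e., $C'_i$ is the conic through $\{y_j : j \neq i,\,j \leq 6\}$, we have $y_6 \in \bigcap_{i=1}^5 C'_i$ immediately. Uniqueness of $y_6$ as this common intersection follows from general position: for distinct $i,j \in \{1,\ldots,5\}$, the conics $C'_i$ and $C'_j$ meet in exactly the four points $\{y_k : k \in \{1,\ldots,6\}\setminus\{i,j\}\}$ by B\'ezout, and among these only $y_6$ lies on every $C'_\ell$ (each $y_k$ with $k \leq 5$ fails to lie on $C'_k$). The main obstacle is the incidence $y_6 \in H_i(C)$; a purely synthetic proof would have to compare the degree-$5$ Cremona $\varphi$ with the degree-one homography $H_i$ on the conic $C$, which is delicate, so the proposed route is the Macaulay2 verification that matches the computational style already used throughout the paper.
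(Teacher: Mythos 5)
The paper does not actually prove this lemma --- it is stated without proof and attributed to Sturm \cite{sturm} --- so there is no internal argument to compare against; your proposal supplies a proof where the paper supplies a citation. Your overall structure is sound: reduce everything to the single incidence $y_6 \in H_i(C)$ (with $(x_6,y_6)$ the completion from Lemma~\ref{lem:6th point pair}), which gives the containment $y_6\in\bigcap_i C_i'$ outright, and then get uniqueness by identifying $C_i'$ with the conic through $\{y_j : j\neq i\}$ and applying B\'ezout to pairs of these conics. I checked the incidence numerically in the paper's running example and it holds (note the printed $y_6$ there has a typo: the last coordinate should be $-1/17$, not $-1/20$).

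Two caveats. First, a concrete slip: under the fixing $x_j=y_j$ for $j=1,2,3,4$, it is $H_5$ that is the identity, not $H_1,\dots,H_4$. Each $H_i$ with $i\le 4$ is pinned down by the three remaining fixed pairs \emph{together with} $(x_5,y_5)$, and since $x_5\neq y_5$ in general these four homographies are nontrivial. Your proposed Macaulay2 script would fail as described; it needs the correct $H_i$. Second, the one step you defer to an unexecuted computation can in fact be closed using results already in the paper, which would make the argument self-contained: the displayed derivation immediately after Theorem~\ref{thm:Werner conics} shows, via Lemma~\ref{lem: conic formula}, that $y_6\in H_i(C)$ is precisely one of the bracket equations \eqref{eq:k=6 brackets} (the one with $p=i$, $s=6$), and those equations hold at $(x_6,y_6)$ because $Z_6$ is rank deficient there (Theorem~\ref{thm:k=6 neither side in a line}). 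Substituting that chain for the proposed symbolic check removes the only unverified step. Finally, your uniqueness argument tacitly assumes the full six-point configuration $\{y_j\}_{j=1}^6$ is in general position (e.g.\ that $y_k\notin C_k'$ for $k\le 5$, i.e.\ the six $y$'s are not conconic, and that the pairwise intersections $C_i'\cap C_j'$ are the four expected points); this is generically true but should be flagged, since the hypothesis only constrains the first five pairs.
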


\begin{figure}
    \centering
    \fbox{%
    \includegraphics[scale=0.175]{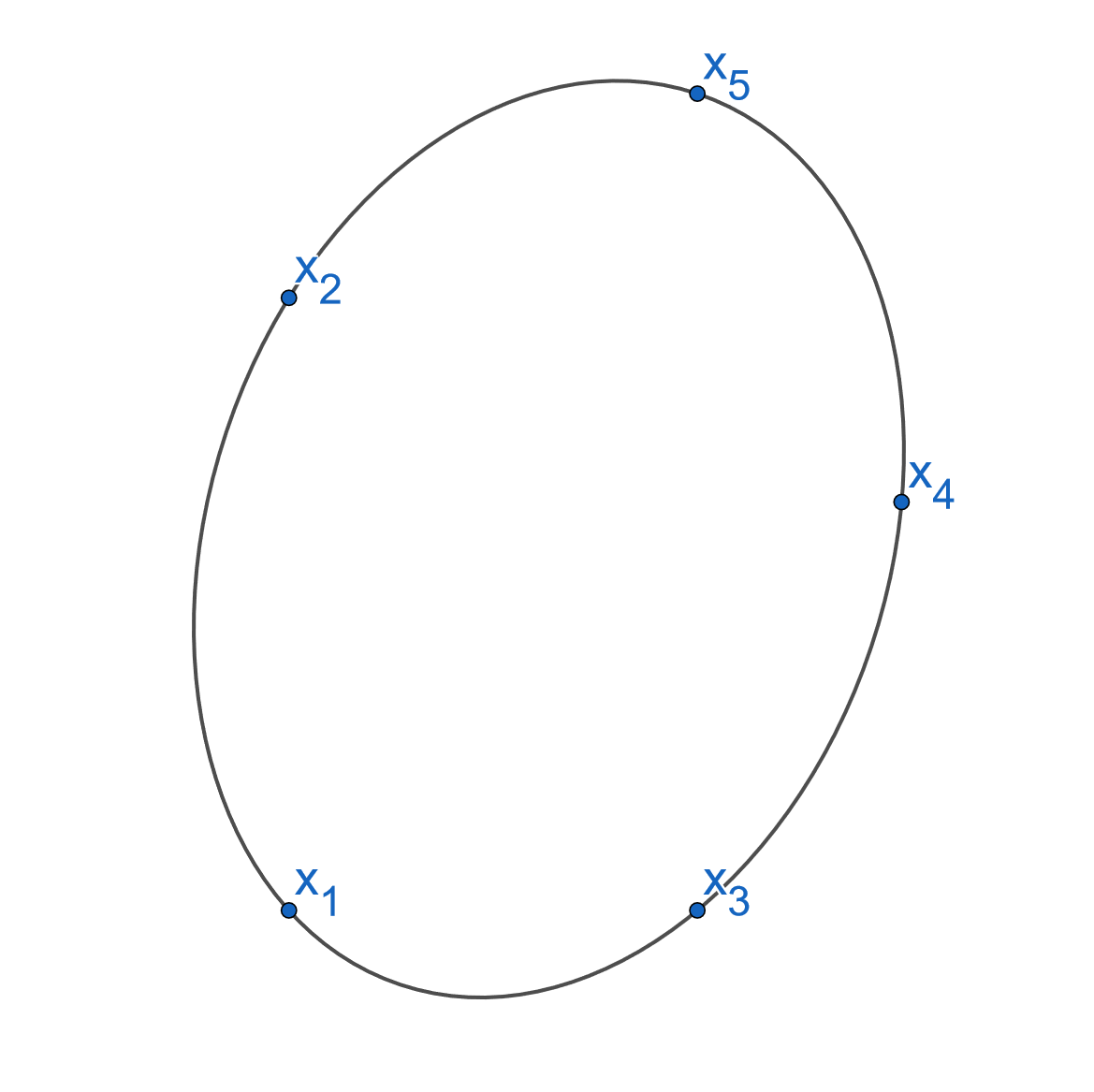}
    }
    \fbox{%
    \includegraphics[scale=0.113]{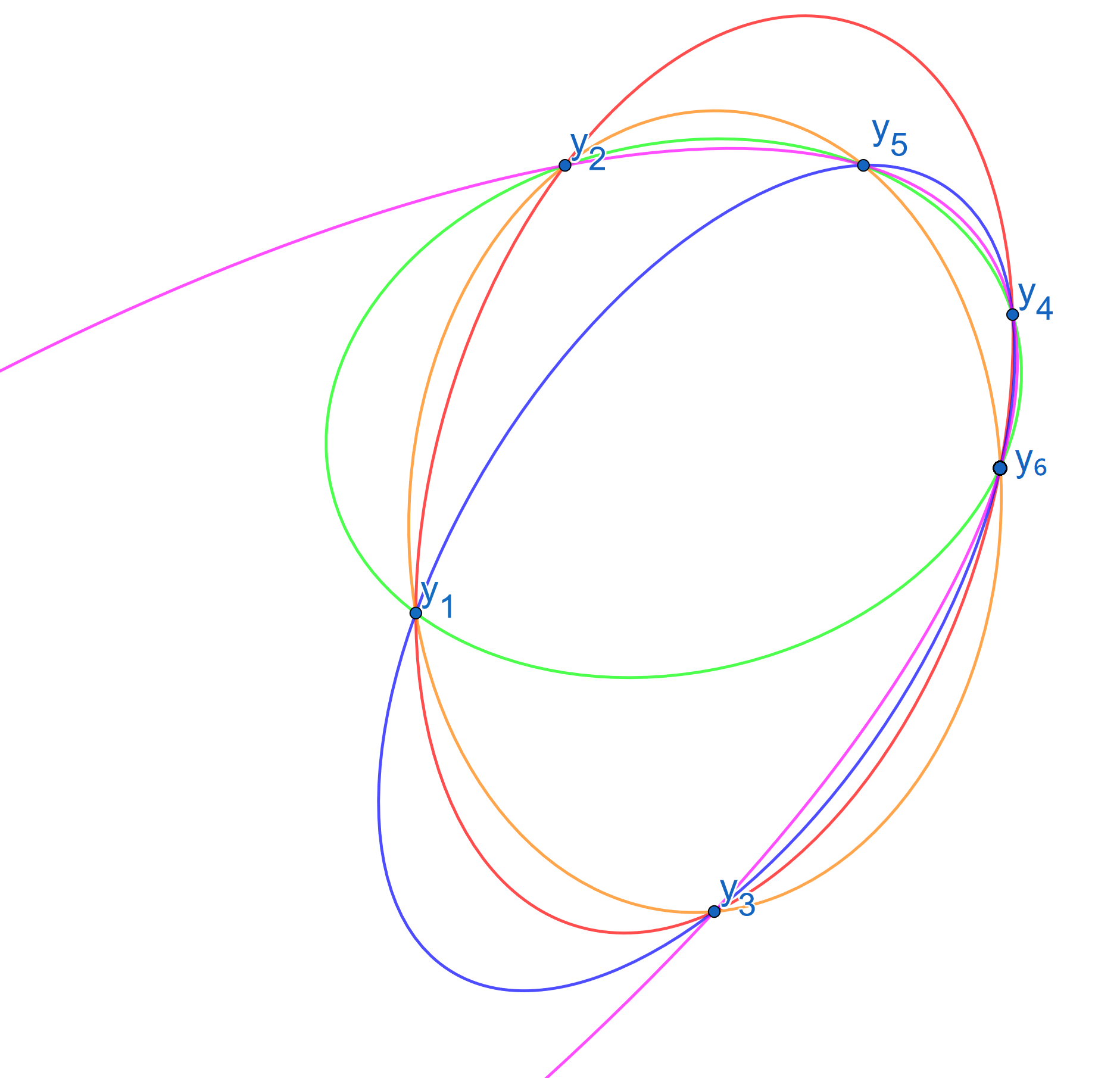}
    }
    \caption{The point $y_6$ is the unique intersection point of the $5$ conics $C_i'$ in $\PP^2_y$. \label{fig:Werner conics}}
\end{figure}

We can now restate Theorem~\ref{thm:k=6 central theorem} using Lemma~\ref{lem:Werner conics} to yield another characterization of minimal rank deficiency of $Z_6$.  
\begin{thm} \label{thm:Werner conics}
Let $\{(x_i,y_i)\}_{i=1}^5$ be in general position. Then 
$(x_6,y_6)$ is the unique new point under which $Z_6$ is rank deficient  if and only if $y_6$ is the unique intersection point $\cap_{j=1}^5 C_i'$ and $x_6$ is the unique intersection point $\cap_{j=1}^5 C_j$. 
\end{thm}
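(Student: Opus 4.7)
The plan is to identify the unique rank-deficient 6th point pair $(x_6,y_6)$ from Lemma~\ref{lem:6th point pair} with the unique Sturm intersection point pair from Lemma~\ref{lem:Werner conics}, using the cubic surface machinery of Theorem~\ref{thm:k=6 central theorem} as the bridge. The theorem is a biconditional, but both sides describe a unique 6th point pair depending on the same five input pairs, so it suffices to prove one implication and invoke uniqueness for the other.

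For the forward direction, I would start with the unique rank-deficient configuration $\{(x_i,y_i)\}_{i=1}^6$ produced by Lemma~\ref{lem:6th point pair}. By Corollary~\ref{cor:rank deficiency gives SD6}, the cubic surface $\mathcal{S} = \mathcal{N}(Z_6) \cap \mathcal{D}$ is smooth, arises as the simultaneous blow-up of $\PP^2_x$ at $\{x_i\}$ and $\PP^2_y$ at $\{y_j\}$, and carries a Schl\"afli double six $\{\ell_i\},\{\ell'_j\}$ of exceptional lines. Applying Lemma~\ref{lem:cubic surface facts 1} then identifies the image $\pi'_y(\ell_i) \subset \PP^2_y$ with the unique conic through the five points $\{y_j : j \neq i,\ 1 \le j \le 6\}$, i.e., with $C_i'$ in the notation established before Lemma~\ref{lem:Werner conics}. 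In particular $y_6$ lies on $\pi'_y(\ell_i)$ for each $i = 1,\ldots,5$.

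The key step is to match this cubic-surface conic $\pi'_y(\ell_i)$ with Sturm's conic $H_i(C)$ from Lemma~\ref{lem:Werner conics}. Both conics lie in the pencil of conics through the four common points $\{y_j : j \neq i,\ 1\le j \le 5\}$, so equality follows once we exhibit a fifth common point. This equality is exactly the classical statement, articulated in the paragraph preceding Lemma~\ref{lem:Werner conics}, that the degree-5 Cremona transformation $\pi'_y \circ \pi_x^{-1}$ is formalized by Sturm's recipe $x_i \mapsto H_i(C)$; we would invoke this identification (equivalently, Lemma~\ref{lem:Werner conics}) to conclude $\pi'_y(\ell_i) = H_i(C) = C_i'$. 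Then $y_6 \in \bigcap_{i=1}^5 C_i'$, and since Lemma~\ref{lem:Werner conics} asserts this intersection is a single point, that point must be $y_6$. The analogous argument with the roles of $x$ and $y$ swapped gives $x_6 = \bigcap_{j=1}^5 C_j$.

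For the converse, I would appeal to uniqueness. Lemma~\ref{lem:Werner conics} produces a unique Sturm intersection pair from the five input pairs, and the forward direction shows this Sturm pair equals the unique rank-deficient pair of Lemma~\ref{lem:6th point pair}; hence any $(x_6,y_6)$ satisfying the Sturm intersection conditions must coincide with the rank-deficient pair. The hard part is the classical identification $\pi'_y(\ell_i) = H_i(C)$, which rests on Sturm's theorem and is precisely why Lemma~\ref{lem:Werner conics} is stated without proof. A self-contained alternative consistent with the paper's Macaulay2-based philosophy would be to fix the first four pairs as in \eqref{eq:fixing}, plug in the rational formulas for $(x_6,y_6)$ from Lemma~\ref{lem:6th point pair}, write down the equation of the conic $H_i(C)$ explicitly from the defining data of $H_i$ and $C$, and verify symbolically that $y_6$ satisfies this equation for each $i = 1,\ldots,5$.
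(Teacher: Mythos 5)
Your argument is correct and follows essentially the same route the paper takes: rank deficiency yields the Schl\"afli double six via Corollary~\ref{cor:rank deficiency gives SD6}, the exceptional lines $\ell_i$ push down under $\pi'_y$ to the conics through five of the six $y$ points by Lemma~\ref{lem:cubic surface facts 1}, and the identification of these with Sturm's conics $H_i(C)$ is exactly the unproven classical input (Lemma~\ref{lem:Werner conics}) that the paper also leans on, with the converse handled by uniqueness. The paper presents the theorem as a restatement of Theorem~\ref{thm:k=6 central theorem} combined with Lemma~\ref{lem:Werner conics} rather than writing this out, so your proposal is a faithful expansion of the intended proof.
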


Theorem~\ref{thm:Werner conics} explains the bracket and cross-ratio conditions that appear in Theorem~\ref{thm:k=6 neither side in a line}. 
Let $C$ be the conic through $x_1,\ldots,x_5$ and let $H$ be the homography such that $Hx_i=y_i$ for all $i=1,2,3,4$. Then $C':=HC$ is a conic that 
passes through $y_1 = Hx_1, y_2 = Hx_2, y_3 = Hx_3, y_4 = Hx_4$ and 
$Hx_5$. By Lemma~\ref{lem: conic formula},  $y_6\in C'$ if and only if
\begin{equation}
\det(H)[135]_x\det(H)[245]_x[146]_y[236]_y=[136]_y[246]_y\det(H)[145]_x\det(H)[235]_x.
\end{equation}
This can be rewritten as the bracket 
equation \eqref{eq:k=6 brackets} by eliminating $\det(H)$:
\begin{align}
[135]_x[245]_x[146]_y[236]_y&=[136]_y[246]_y[145]_x[235]_x
\end{align}
The other 29 bracket equations follow by selecting different conics and homographies.

\subsubsection{The Cremona hexahedral form of a cubic surface}
We now give a third characterization of minimal rank deficiency of $Z_6$ via the {\em Cremona hexahedral form} of a cubic surface. This will explain the statement of Theorem~\ref{thm:k=6 general position}. 
\medskip

Recall that every smooth cubic surface is the blow up of $\PP^2$ at $6$ points in general position. The Cremona hexahedral form of the surface provides explicit equations for it in terms of the points being blown up. We briefly describe these equations and the properties we need. More details can be found in \cite{coble}*{Section 4}, \cite{dolgachev}*{Section 9.4.3}, \cite{Ottaviani2013FiveLO}*{Section 5.6}. 

Given $6$ points $p_1,\ldots,p_6 \in \PP^2$, consider the 
following list of $6$ cubic polynomials in $u=(u_0,u_1,u_2)$, the coordinates of $\PP^2$, that vanish on them. 
\begin{small}
\begin{align} \label{eq:covariant cubics}
\begin{split}
a(u)&=[25u][13u][46u]+[51u][42u][36u]+[14u][35u][26u]+[43u][21u][56u]+[32u][54u][16u]\\
b(u)&=[53u][12u][46u]+[14u][23u][56u]+[25u][34u][16u]+[31u][45u][26u]+[42u][51u][36u]\\
c(u)&=[53u][41u][26u]+[34u][25u][16u]+[42u][13u][56u]+[21u][54u][36u]+[15u][32u][46u]\\
d(u)&=[45u][31u][26u]+[53u][24u][16u]+[41u][25u][36u]+[32u][15u][46u]+[21u][43u][56u]\\
e(u)&=[31u][24u][56u]+[12u][53u][46u]+[25u][41u][36u]+[54u][32u][16u]+[43u][15u][26u]\\
f(u)&=[42u][35u][16u]+[23u][14u][56u]+[31u][52u][46u]+[15u][43u][26u]+[54u][21u][36u]
\end{split}
\end{align}
\end{small}
In fact, each summand in each cubic is the product of the lines through $3$ disjoint pairs of the $6$ points and already vanishes on the points. Formally these expressions are similar to 
the scalars in \eqref{eq:bumped up Joubert invariants}. These cubic polynomials are {\em covariants} of $p_1, 
\ldots, p_6$ under the action of $\textup{PGL}(3)$, and appear in the work of Coble 
\cite{coble}. 

 The $6$ cubics in \eqref{eq:covariant cubics} span the $4$-dimensional vector space of cubics that vanish on $p_1,\ldots, p_6$ and can be used to blow up 
 $\PP^2$ at $p_1, \ldots, p_6$ to obtain a cubic surface ${S}$. This surface is the closure of 
\begin{align}\label{eq: k=6 explicit cubic equations for canonical blowup}
    \{(a(u),b(u),c(u),d(u),e(u),f(u)) ~:~u\in\PP^2\})\subset\PP^5, 
\end{align}
 and the zero set of the {\bf Cremona hexahedral equations}:
\begin{align}\label{eq:canonical blow up}
\begin{split}
    z_1^3+z_2^3+z_3^3+z_4^3+z_5^3+z_6^3&=0\\
    z_1 + z_2 + z_3 + z_4 + z_5 + z_6 & = 0\\
    \bar{a} z_1+\bar{b} z_2 +\bar{c} z_3+\bar{d}z_4 +\bar{e} z_5 +\bar{f} z_6 &=0 
    \end{split}
\end{align}
where the scalars $\bar{a},\ldots,\bar{f}$ are as in 
\eqref{eq:bumped up Joubert invariants}. Here, $z_1, \ldots, z_6$ as the coordinates of $\PP^5$.

The {\bf Cremona hexahedral form} of $S$ can be used to find the $27$ lines on $S$ explicitly via  expressions for the $45$ tritangent planes of $S$, see \cite{coble}*{Section 4}. In particular the $15$ lines that are not 
part of the Schl\"afli double six corresponding to the blow up are obtained by permuting the equations:
\begin{equation}\label{eq:15 bad lines}
z_1+z_4=z_2+z_5=z_3+z_6=0.
\end{equation}
Under the blow up morphism $\pi:S\to\PP^2$, these lines correspond to specific lines $\overline{p_ip_j}$; the one above corresponds to the line between $p_1$ and $p_2$.

We can characterize the rank deficiency of $Z_6$ via the Cremona hexahedral form.

\begin{thm}\label{thm:canonical blow up}
Let $x_1,\ldots,x_6\in\PP^2_x$ and $y_1,\ldots,y_6\in\PP^2_y$ be in general position and let $S_x$ (respectively, $S_y$) be the blow up of $\PP^2_x$ (respectively, $\PP^2_y$) at $x_1, \ldots, x_6$ (respectively, $y_1, \ldots, y_6$) in Cremona hexahedral form.  Then $S_x=S_y$ if and only if either there exists homography $H$ such that $Hx_i=y_i$ for all $i=1,\ldots,6$ or the matrix $Z_6$ is rank deficient.
\end{thm}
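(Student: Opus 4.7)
The plan is to reduce Theorem~\ref{thm:canonical blow up} to Theorem~\ref{thm:k=6 general position}. First, I would observe that the Cremona hexahedral surface $S_\star \subset \PP^5$ (for $\star \in \{x,y\}$) is cut out by the common cubic $z_1^3 + z_2^3 + \cdots + z_6^3 = 0$, the common linear form $z_1 + z_2 + \cdots + z_6 = 0$, and the distinguishing linear form $\bar{a}_\star z_1 + \bar{b}_\star z_2 + \cdots + \bar{f}_\star z_6 = 0$. Since a smooth cubic surface is linearly nondegenerate in its ambient $\PP^3$, the set-theoretic equality $S_x = S_y$ forces both surfaces to span the same $\PP^3 \subset \PP^5$, so the two pairs of linear forms must span the same two-dimensional subspace of $\CC[z_1,\ldots,z_6]_1$. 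Equivalently, there exist scalars $\lambda \neq 0$ and $\mu$ with
\begin{equation}\label{eq:canonical BU dependence}
(\bar{a}_y, \bar{b}_y, \bar{c}_y, \bar{d}_y, \bar{e}_y, \bar{f}_y) = \lambda(\bar{a}_x, \bar{b}_x, \bar{c}_x, \bar{d}_x, \bar{e}_x, \bar{f}_x) + \mu (1,1,1,1,1,1),
\end{equation}
and conversely, proportional invariant vectors immediately give $S_x = S_y$ from the defining ideals.

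Next I would invoke the classical identity
\begin{equation}\label{eq:sum identity}
\bar{a} + \bar{b} + \bar{c} + \bar{d} + \bar{e} + \bar{f} = 0,
\end{equation}
valid for any $6$ points in $\PP^2$; this is the $\PP^2$ analogue of the well-known relation $A + B + C + D + E + F = 0$ satisfied by the classical Joubert invariants \eqref{eq:Joubert invariants} of $6$ points on a line, and it can be verified termwise from the defining formulas \eqref{eq:bumped up Joubert invariants}. Summing both sides of \eqref{eq:canonical BU dependence} and applying \eqref{eq:sum identity} to both the $x$ and $y$ sides forces $6\mu = 0$, hence $\mu = 0$ and
\begin{equation}
(\bar{a}_x, \bar{b}_x, \bar{c}_x, \bar{d}_x, \bar{e}_x, \bar{f}_x) \sim (\bar{a}_y, \bar{b}_y, \bar{c}_y, \bar{d}_y, \bar{e}_y, \bar{f}_y).
\end{equation}
Theorem~\ref{thm:k=6 general position} now yields the claimed dichotomy directly: either $Z_6$ is rank deficient or there exists a homography $H \in \PGL(3)$ with $Hx_i = y_i$ for all $i$.

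The main obstacle I anticipate is a clean justification of the sum identity \eqref{eq:sum identity}. Without it, $S_x = S_y$ would a priori be strictly weaker than projective proportionality of the invariant vectors (one would be equating points in $\PP^5 / \langle (1,\ldots,1)\rangle$ rather than in $\PP^5$), and the reduction to Theorem~\ref{thm:k=6 general position} would fail. The identity is classical and closely tied to the fact that the six covariant cubics \eqref{eq:covariant cubics} span only a four-dimensional linear system of cubics through the blown-up points and thus admit a manifestly all-ones relation; a self-contained hand proof would require expanding the $30$ signed three-bracket products appearing across \eqref{eq:bumped up Joubert invariants} and verifying termwise cancellation, a task most cleanly delegated to Macaulay2 in keeping with the paper's computational philosophy.
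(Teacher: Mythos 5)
Your proof is correct, but it takes a genuinely different route from the paper's. The paper argues geometrically: from $S_x=S_y$ it extracts the $15$ lines \eqref{eq:15 bad lines} common to both hexahedral forms, deduces that the two sets of exceptional curves either coincide (giving the homography via $\pi_y\circ\pi_x^{-1}$) or form a Schl\"afli double six (giving rank deficiency via Theorem~\ref{thm:k=6 central theorem}). You instead reduce everything to linear algebra on the defining equations \eqref{eq:canonical blow up} plus the already-established Theorem~\ref{thm:k=6 general position}. This is not circular --- Theorem~\ref{thm:k=6 general position} has an independent computational proof ($M_{14}=I_{14}$) --- but it does invert the paper's expository intent, since the paper uses Theorem~\ref{thm:canonical blow up} to \emph{explain} Theorem~\ref{thm:k=6 general position} geometrically, whereas you use the latter to prove the former; your argument therefore ultimately rests on the Macaulay2 verification rather than on the double-six geometry. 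The one ingredient you flag as uncertain, the identity $\bar{a}+\bar{b}+\bar{c}+\bar{d}+\bar{e}+\bar{f}=0$, does hold and admits a short hand proof: each of the $15$ perfect matchings of $\{1,\ldots,6\}$ occurs in exactly two of the six expressions \eqref{eq:bumped up Joubert invariants}, with its first two pairs listed in opposite order (e.g.\ $[(25)(13)(46)]$ in $\bar{a}$ versus $[(31)(52)(46)]$ in $\bar{f}$), and since $[(ij)(kl)(rs)]=-[(kl)(ij)(rs)]$ by the alternating property of brackets, the $30$ terms cancel in pairs. With that identity in hand, your observation that summing
\begin{equation*}
(\bar{a}_y,\ldots,\bar{f}_y)=\lambda(\bar{a}_x,\ldots,\bar{f}_x)+\mu(1,\ldots,1)
\end{equation*}
forces $\mu=0$ is a genuinely nice addition: it gives a computation-free reason why the rank condition on the $3\times 6$ matrix \eqref{eq:3x6 matrix} collapses to the proportionality \eqref{eq:bar vectors are multiples of each other}, a step the paper delegates to a Macaulay2 check of equality of minor ideals. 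In short: the paper's proof buys geometric insight and self-containment within the cubic-surface framework; yours buys brevity and makes the passage between \eqref{eq:3x6 matrix} and \eqref{eq:2x6 matrix} transparent, at the cost of leaning on the computational theorem the section was meant to illuminate.
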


\begin{proof}
If $S_x=S_y$, then they both contain the $15$ lines in equation \eqref{eq:15 bad lines}, none of which correspond to the proper transforms of either set of $6$ points under their respective blow up morphisms. It follows that either the exceptional lines of $x_1,\ldots,x_6$ are exactly the exceptional lines of $y_1,\ldots,y_6$ or that the two sets are entirely distinct. Moreover, since each of the $15$ lines in \eqref{eq:15 bad lines} have specific intersection conditions, either $\ell_{x_i}=\ell_{y_i}$ for all $i$, or $\{\ell_{x_1},\ldots,\ell_{x_6}\}$ and $\{\ell_{y_1},\ldots,\ell_{y_6}\}$ form a Schl\"afli double six.

In the first case, the exceptional lines of $x_1,\ldots,x_6$ are exactly the exceptional lines of $y_1,\ldots,y_6$. Then we can define an invertible projective transformation $\pi_y\circ\pi_x^{-1}:\PP^2_x\to\PP^2_y$, where $\pi_x$ and $\pi_y$ again refer to the blow up morphisms, such that $x_i\mapsto y_i$ for all $i$. This means that for each $i=1,\ldots,6$, $x_i$ and $y_i$ are related by a homography $H$.

In the second case the exceptional lines of $x_1,\ldots,x_6$ are distinct from the exceptional lines of $y_1,\ldots,y_6$. In this case they form a Schl\"afli double six  and by Theorem \ref{thm:k=6 central theorem}, $Z_6$ is rank deficient.

To prove the reverse direction, suppose $Hx_i = y_i$ for some homography $H$. Then  $S_x = S_y$ because $(\bar{a}_x, \ldots \bar{f}_x) \sim (\bar{a}_y, \ldots, \bar{f}_y)$ by the formula in \eqref{eq:bumped up Joubert invariants} which means that $S_x$ and $S_y$ have the same Cremona hexahedral form. Similarly, if $Z_6$ is rank deficient then by Theorem~\ref{thm:k=6 central theorem}, $\PP^2_x$ blown up at $\{x_i\}$ agrees with $\PP^2_y$ blown up at $\{y_i\}$ and hence $S_x = S_y$.
\end{proof}

Theorem~\ref{thm:canonical blow up} provides a geometric proof of the algebraic characterization of rank deficiency in Theorem~\ref{thm:k=6 general position}. As before, 
we subscript the cubics in \eqref{eq:covariant cubics} with $x$ or $y$ depending on whether they come from $\{x_i\}$ or $\{y_i\}$. The Cremona hexahedral equations \eqref{eq:canonical blow up} imply that the two cubic surfaces $S_x$ and $S_y$ are equal if and only if the following polynomials in $u$ (coordinates on $\PP^2_x$) and 
$v$ (coordinates on $\PP^2_y$) are identically zero. 
\begin{equation}\label{eq: k=6 cubic equations equiv 0}
\begin{split}
\bar{a}_xa_y(v)+\bar{b}_xb_y(v)+\bar{c}_xc_y(v)+\bar{d}_xd_y(v)+\bar{e}_xe_y(v)+\bar{f}_xf_y(v)\equiv 0\\
\bar{a}_ya_x(u)+\bar{b}_yb_x(u)+\bar{c}_yc_x(u)+\bar{d}_yd_x(u)+\bar{e}_ye_x(u)+\bar{f}_yf_x(u)\equiv 0.
\end{split}
\end{equation}
In other words, 
\begin{equation}
\begin{split}
\bar{a}_xz_1+\bar{b}_xz_2+\bar{c}_xz_3+\bar{d}_xz_4+\bar{e}_xz_5+\bar{f}_xz_6=0 & \quad\quad\forall z\in S_y\\
\bar{a}_yz_1+\bar{b}_yz_2+\bar{c}_yz_3+\bar{d}_yz_4+\bar{e}_yz_5+\bar{f}_yz_6 = 0 & \quad\quad\forall z\in S_x
\end{split}
\end{equation}
or equivalently, the matrix
\begin{align} \label{eq:3x6 matrix}
    \begin{bmatrix} 1 & 1 & 1 & 1 & 1 & 1\\
    \bar{a}_x &  \bar{b}_x &  \bar{c}_x &  \bar{d}_x &  \bar{e}_x &  \bar{f}_x\\
    \bar{a}_y &  \bar{b}_y &  \bar{c}_y & \bar{d}_y &  \bar{e}_y &  \bar{f}_y\end{bmatrix} 
\end{align}
is rank deficient. One can check using Macaulay2 that the ideal of $3 \times 3$ minors of 
\eqref{eq:3x6 matrix} coincides with the ideal  $\mathcal{I}_{nvariants}(Z_6)$
of $2 \times 2$ minors of \eqref{eq:2x6 matrix}.
Therefore, we conclude that $S_x$ and $S_y$ coincide if and only if 
\eqref{eq:bar vectors are multiples of each other} holds, which is the same as 
\begin{align*}
 \begin{pmatrix} \bar{a}_x &  \bar{b}_x &  \bar{c}_x &  \bar{d}_x &  \bar{e}_x &  \bar{f}_x
 \end{pmatrix} = 
 \lambda \begin{pmatrix} 
    \bar{a}_y &  \bar{b}_y &  \bar{c}_y & \bar{d}_y &  \bar{e}_y &  \bar{f}_y \end{pmatrix} 
\end{align*}
for some non-zero scalar $\lambda$.

This concludes the discussion about $6$ points $\{(x_i,y_i)\}_{i=1}^6$ in general position. 

\subsubsection{The Proof of Theorem~\ref{thm: k=6 one side in a line}} 
We can now use the tools developed in the previous subsection to prove Theorem~\ref{thm: k=6 one side in a line} which is the last statement left to prove. This is the case of one $\PP^2$ having all input points in a line; suppose this is 
$\PP^2_y$. We first show that under this assumption, \eqref{eq: k=6 cubic equations equiv 0} can be simplified using Lemma \ref{lem:simplifyng 3x3 brackets - line}. Note that 
if $y_1,\ldots,y_6$ are in a line $\ell$ then for all points $v\in\ell$  
\begin{equation}
(a_y,b_y,c_y,d_y,e_y,f_y)(v)=(0,0,0,0,0,0), 
\end{equation}
and for all points $v\in\PP^2_y \backslash\ell$, using Lemma~\ref{lem:simplifyng 3x3 brackets - line},
\begin{equation}
(a_y,b_y,c_y,d_y,e_y,f_y)(v)\sim(A_y,B_y,C_y,D_y,E_y,F_y)
\end{equation}
where $A_y,\ldots,F_y$ are  the Joubert invariants introduced in \eqref{eq:Joubert invariants}. The Joubert invariants are known to satisfy
\begin{equation}\label{eq:Basic Joubert invariant Properties}
\begin{split}
A+B+C+D+E+F=0\\
A^3+B^3+C^3+D^3+E^3+F^3=0
\end{split}
\end{equation}
(see \cite{dolgachev}*{Theorem 9.4.10}), and  any $5$ of them give a complete basis for the space of invariants for $6$ ordered points in $\PP^1$. In particular, if $p_1,\ldots,p_6\in\PP^1$ and $q_1,\ldots,q_6\in\PP^1$ are each sets of $6$ distinct points, then there exists a homography $H$ sending $p_i\mapsto q_i$ if and only if
\begin{equation}
(A_p,\ldots,F_p)\sim(A_q,\ldots,F_q).
\end{equation}

We further note that if $\{y_i\}_{i=1}^6$ are on a line then $\bar{a}_y=\cdots=\bar{f}_y=0$ by looking at the definition 
of $[(ij)(kl)(rs)]$. 
We can then simplify \eqref{eq: k=6 cubic equations equiv 0} to the single equation \eqref{eq: k=6 one side in a line basic eq} which was:
\begin{equation*}
\bar{a}_xA_y+\bar{b}_xB_y+\bar{c}_xC_y+\bar{d}_xD_y+\bar{e}_xE_y+\bar{f}_xF_y=0.
\end{equation*}
The following lemma proves the first part of 
Theorem~\ref{thm: k=6 one side in a line}. 
\begin{lem}\label{lem:k=6 one side in a line}
Let $\{(x_i,y_i)\}_{i=1}^6 \subset \PP^2 \times \PP^2$ be such that the points in one $\PP^2$ are in a line. Then $Z_6$ is rank deficient if and only if \eqref{eq: k=6 one side in a line basic eq} holds.
\end{lem}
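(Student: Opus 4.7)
My plan is to reduce the statement to a single polynomial identity via a convenient choice of coordinates. First, by Lemma~\ref{lem:rank drop is invariant under homographies}, I may apply a homography on $\PP^2_y$ sending the line $\ell$ containing $y_1,\ldots,y_6$ to $\{z_2 = 0\}$, giving $y_i = (m_i, 0, 1)$ for some $m_i \in \CC$. Under this normalization the $2$nd, $5$th, and $8$th columns of $Z_6$ vanish identically, so $\operatorname{rank}(Z_6) < 6$ if and only if the remaining $6 \times 6$ submatrix $\tilde Z_6$ is singular. Writing $\tilde Z_6 = [DX \mid X]$ where $X$ has rows $x_i^\top$ and $D = \operatorname{diag}(m_1,\ldots,m_6)$, Laplace expansion across the two column blocks yields
\begin{equation}
\det(\tilde Z_6) = \sum_{\substack{I \subset \{1,\ldots,6\}\\ |I|=3}} \operatorname{sgn}(I,I^c)\, \Big(\prod_{i\in I} m_i\Big)\, [I]_x\, [I^c]_x,
\end{equation}
a polynomial of bidegree $(3,6)$ in the $m_i$'s and the $x$-coordinates.

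Second, I would expand $\bar{a}_x A_y + \cdots + \bar{f}_x F_y$ under the same normalization. Each Joubert invariant $A_y,\ldots,F_y$ is a sum of five signed products of three factors of the form $(m_i - m_j) = [ij]_y$, so after multiplying out, the whole expression becomes a polynomial of bidegree $(3,6)$ whose $\prod_{i \in I} m_i$-coefficient (for each $3$-subset $I$) is a specific degree-$6$ polynomial in the $x$'s assembled from the $[(ij)(kl)(rs)]_x$'s inside the covariants $\bar{a}_x,\ldots,\bar{f}_x$. The lemma then reduces to the identity
\begin{equation}
\det(\tilde Z_6) \;=\; \lambda \cdot \bigl(\bar{a}_x A_y + \bar{b}_x B_y + \bar{c}_x C_y + \bar{d}_x D_y + \bar{e}_x E_y + \bar{f}_x F_y\bigr)
\end{equation}
for some nonzero constant $\lambda$, i.e.\ to matching, for each of the twenty triples $I$, the Laplace coefficient $\pm[I]_x[I^c]_x$ with the aggregated coefficient on the right.

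Third, following the paper's general computational strategy, I would verify this polynomial identity in Macaulay2, first applying the further fixing $x_i = y_i = (1,0,0),(0,1,0),(0,0,1),(1,1,1)$ for $i \le 4$ from Corollary~\ref{cor:assumptions} (where compatible) to cut down variables. Since both sides are homogeneous of the same bidegree, the identity is decided on finitely many monomial coefficients; one may equivalently check that the ideal generated by the $6\times 6$ minors of the normalized $Z_6$ agrees, up to inherited components (repeated point pairs and the $k\le 5$ rank-drop conditions), with the principal ideal generated by~\eqref{eq: k=6 one side in a line basic eq}. Once this identity is established, the conclusion is immediate: $Z_6$ is rank deficient iff $\det(\tilde Z_6)=0$ iff~\eqref{eq: k=6 one side in a line basic eq} holds.

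The main obstacle is the bracket bookkeeping: aligning the signs of the Laplace expansion with those implicit in $[(ij)(kl)(rs)]_x = [ijr]_x[kls]_x - [ijs]_x[klr]_x$ across all twenty $I$'s is delicate, since each $3$-subset $I$ receives contributions from several summands of several covariants. The Macaulay2 verification sidesteps this by performing the expansion symbolically and reducing the claim to a finite check of polynomial equality.
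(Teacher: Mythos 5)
Your proposal is correct and follows essentially the same route as the paper: normalize the collinear points so that three columns of $Z_6$ vanish, reduce rank deficiency to the vanishing of the single surviving $6\times 6$ minor, and verify by symbolic computation that this determinant equals a nonzero constant multiple of the left-hand side of \eqref{eq: k=6 one side in a line basic eq} (the paper finds the constant to be $24$ with the normalization $y_i=(y_{i1},1,0)$). The Laplace expansion of $\det[DX\mid X]$ into terms $\pm\bigl(\prod_{i\in I}m_i\bigr)[I]_x[I^c]_x$ is a nice structural addition but is not needed once the identity is checked in Macaulay2.
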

\begin{proof}
The proof is relatively straightforward. Suppose 
$y_1,\ldots,y_6$ are in a line. Applying homographies to each side we may assume that the $x_i$'s are all finite and that $y_i=(y_{i1},1,0)$. Then 
\begin{equation}
    Z_6=\begin{bmatrix} 
    x_{11}y_{11} & x_{11} & 0 & x_{12}y_{11} & x_{12} & 0 & y_{11} & 1 & 0\\
    x_{21}y_{21} & x_{21} & 0 & x_{22}y_{21} & x_{22} & 0 & y_{21} & 1 & 0\\
    x_{31}y_{31} & x_{31} & 0 & x_{32}y_{31} & x_{32} & 0 & y_{31} & 1 & 0\\
    x_{41}y_{41} & x_{41} & 0 & x_{42}y_{41} & x_{42} & 0 & y_{41} & 1 & 0\\
    x_{51}y_{51} & x_{51} & 0 & x_{52}y_{51} & x_{52} & 0 & y_{51} & 1 & 0\\
    x_{61}y_{61} & x_{61} & 0 & x_{62}y_{61} & x_{62} & 0 & y_{61} & 1 & 0
    \end{bmatrix}
\end{equation}
and $Z_6$ is rank deficient if and only if its  only obviously non-zero $6\times 6$ minor $\det(M)=0$. Furthermore, a quick computation will give
\begin{equation}
\bar{a}_xA_y+\bar{b}_xB_y+\bar{c}_xC_y+\bar{d}_xD_y+\bar{e}_xE_y+\bar{f}_xF_y=24\det(M)
\end{equation}
yielding the desired result.
\end{proof}

Before we prove the second part of Theorem~\ref{thm: k=6 one side in a line}, we compute an example to illustrate the statement.

\begin{ex}
Consider the following points in which the $x$ points are in general position and the $y$ points are all distinct and contained in a line. This can be seen as a configuration in $\PP^2\times\PP^1$ so we fix
\begin{align*}
x_1=(0,0,1)\quad& y_1=(0,1)  \quad\quad&x_4=(1,1,1)\quad&y_4=(-4,1)\\ 
x_2=(1,0,1)\quad&y_2=(1,1) \quad\quad&x_5=(3,5,1)\quad& y_5=(8,1)\\
x_3=(0,1,1)\quad& y_3=(3,1)
\end{align*}
and leave $x_6,y_6$ as symbolic finite points. This creates a $6 \times 6$ matrix 
\begin{equation} \label{eq:6x6 Z}
Z_6=\begin{bmatrix}
0 & 0 & 0 & 0 & 0 & 1\\
1 & 1 & 0 & 0 & 1 & 1\\
0 & 0 & 3 & 1 & 3 & 1\\
-4 & 1 & -4 & 1 & -4 & 1\\
24 & 3 & 40 & 5 & 8 & 1\\
x_{61}y_{61} & x_{61} & x_{62}y_{61} & x_{62} & y_{61} & 1
\end{bmatrix}.
\end{equation}
(The reader may also view the $y_i$ as points on the line at infinity in $\PP^2$ where the third coordinate is $0$; the non-zero part of the corresponding $6 \times 9$ matrix $Z_6$ is \eqref{eq:6x6 Z}.)

The matrix $Z_6$ in \eqref{eq:6x6 Z} is rank deficient exactly when its determinant, a quadratic polynomial in the coordinates of $x_6$ and $y_6$, is $0$. If we arbitrarily fix $x_6=(2,11,1)$, then 
$\det(Z_6)=-918y_{61}+2942$, and $y_6=(2942,918)$ is the unique point $y_6$ such that the configuration is rank deficient.

If we consider only the first $5$ point pairs we fixed initially, there exists a unique projective transformation
\begin{equation}
T=\begin{bmatrix}
146 & -294 & 0\\ 135 & -109 & 11
\end{bmatrix}
\end{equation}
such that $Tx_i\sim y_i$ for all $i=1,\ldots,5$. Moreover, we can check that $Tx_6=(-2942,-918)\sim y_6$. This transformation $T$ has center $u=(-1617,-803,11888)$. This point actually has some further significance, because we can check that
\begin{equation}
(a_x(u),\ldots,f_x(u))\sim(48079,-55599,-88559,-17265,22529,90815)\sim(A_y,\ldots,F_y)
\end{equation}
\qed
\end{ex}

The following now finishes the proof of Theorem \ref{thm: k=6 one side in a line}.

\begin{proof}[Proof of Theorem~\ref{thm: k=6 one side in a line} (second part)]
Let $S$ be the blow up of $\PP^2_x$ in $x_1,\ldots,x_6$ in Cremona hexahedral form, and recall \eqref{eq:canonical blow up} and \eqref{eq:Basic Joubert invariant Properties}. It follows that \eqref{eq: k=6 one side in a line basic eq} holds if and only if $(A_y,\ldots,F_y)\in S$, which holds if and only if there exists $u\in\PP^2_x$ such that 
\begin{equation}
(a_x(u),\ldots,f_x(u))\sim(A_y,\ldots,F_y).
\end{equation}

We first show that if $Z_6$ is rank deficient then a projection exists. Let $\ell\subset\PP^2_x$ be a line such that $u\not\in\ell$. We then construct a projection $T':\PP^2_x\to\ell$ with center $u$ as in the proof of Theorem \ref{thm: k=5 geometry theorem} and Figure \ref{fig: k=5 projection construction}. This projection has $T'(x_i)=a_i$ for all $i$, where the $a_i$ are defined as in Figure \ref{fig: k=5 projection construction}. Then
\begin{equation}
(A_a,\ldots,F_a)\sim(a_x(u),\ldots,f_x(u))\sim(A_y,\ldots,F_y).
\end{equation}
Since the Joubert invariants generate the space of all invariants for $6$ ordered points on a line it follows that there exists a homography $H:\ell\to\PP^1_y$ such that $Ha_i=y_i$. We conclude that $T=HT'$ is the desired projection $T:\PP^2_x\to\PP^1_y$.

To prove the reverse direction, suppose such a projection $T$ exists. We can use homography on each side to fix $u=(0,0,1)$ and to fix the image line at infinity. Equivalently, pick homographies on each side such that
\begin{equation}
T=\begin{bmatrix}1 & 0 & 0\\ 0 & 1 & 0\end{bmatrix}.
\end{equation}
It then follows that $x_i=(y_{i1},y_{i2},b_i)$ for all $i$, where $b_i$ are unknown scalars. The matrix $Z_6$  then becomes
\begin{equation}
\begin{bmatrix}
y_{11}^2 & y_{11}y_{12} & y_{11}y_{12} & y_{12}^2 & y_{11}b_1 & y_{12}b_1\\ 
y_{21}^2 & y_{21}y_{22} & y_{21}y_{22} & y_{22}^2 & y_{21}b_2 & y_{22}b_2\\
y_{31}^2 & y_{31}y_{32} & y_{31}y_{32} & y_{32}^2 & y_{31}b_3 & y_{32}b_3\\
y_{41}^2 & y_{41}y_{42} & y_{41}y_{42} & y_{42}^2 & y_{41}b_4 & y_{42}b_4\\
y_{51}^2 & y_{51}y_{52} & y_{51}y_{52} & y_{52}^2 & y_{51}b_5 & y_{52}b_5\\
y_{61}^2 & y_{61}y_{62} & y_{61}y_{62} & y_{62}^2 & y_{61}b_6 & y_{62}b_6
\end{bmatrix}
\end{equation}
which is clearly rank deficient. This concludes the proof of Theorem~\ref{thm: k=6 one side in a line}.
\end{proof}

\bibliography{Part1}

@article {bertolini-magri-turrini,
    AUTHOR = {M. Bertolini and L. Magri and C. Turrini},
     TITLE = {Critical loci for two views reconstruction as quadratic
              transformations between images},
   JOURNAL = {J. Math. Imaging Vision},
  FJOURNAL = {Journal of Mathematical Imaging and Vision},
    VOLUME = {61},
      YEAR = {2019},
    NUMBER = {9},
     PAGES = {1322--1328}
}

@article{connelly-thomas-vinzant,
author = {E. Connelly and R.R. Thomas and C. Vinzant},
title = {The geometry of rank drop in a class of face-splitting matrix products: Part {II}},
journal = {Advances in Geometry (in print)},
year = 2024
}

@book {eisenbud-book,
    AUTHOR = {Eisenbud, D.},
     TITLE = {Commutative algebra},
    SERIES = {Graduate Texts in Mathematics},
    VOLUME = {150},
 PUBLISHER = {Springer-Verlag, New York},
      YEAR = {1995},
     PAGES = {xvi+785}
}

@book {semple-kneebone,
    AUTHOR = {Semple, J. G. and Kneebone, G. T.},
     TITLE = {Algebraic projective geometry},
 PUBLISHER = {Oxford University Press},
      YEAR = {1998},
      ISBN = {0-19-850363-6}
}

@book{sturm,
AUTHOR = {Sturm, R.},
TITLE = {Die lehre von den geometrischen verwandtschaften},
VOLUME = {3},
PUBLISHER = {Cornell University Library},
      YEAR = {1908},
      ISBN = {978-1429701365}
}

@Misc{M2,
          author = {Grayson, D. R. and Stillman, M. E.},
          title = {Macaulay2, a software system for research in algebraic geometry},
          howpublished = {Available at \url{http://www.math.uiuc.edu/Macaulay2/}}
        }

@article {khatri-rao-1968,
    AUTHOR = {Khatri, C. G. and Rao, C. R.},
     TITLE = {Solutions to some functional equations and their applications
              to characterization of probability distributions},
   JOURNAL = {Sankhy\=Ser. A},
    VOLUME = {30},
      YEAR = {1968},
     PAGES = {167--180}
}

@book{hartley-zisserman-2004,
  author    = {R. Hartley and
               A. Zisserman},
  title     = {Multiple {V}iew {G}eometry in {C}omputer {V}ision},
  publisher = {Cambridge University Press},
  year      = {2004},
  url       = {https://doi.org/10.1017/cbo9780511811685},
  doi       = {10.1017/cbo9780511811685},
  isbn      = {9780511811685},
  timestamp = {Tue, 09 Jul 2019 16:30:03 +0200},
  biburl    = {https://dblp.org/rec/books/cu/HZ2004.bib},
  bibsource = {dblp computer science bibliography, https://dblp.org}
}

@inproceedings{wernerfivepoints,
author = {Werner, T.},
booktitle={{P}roceedings of {IEEE} {C}onference on {C}omputer {V}ision and {P}attern {R}ecognition}, 
  title={Constraint on five points in two images}, 
  year={2003},
  volume={2},
pages = {203--208}
}

@article{Ottaviani2013FiveLO,
  title={Five Lectures on Projective Invariants},
  author={G. Ottaviani},
  journal={Rend. Semin. Mat. Univ. Politec. Torino},
  volume = {71},
  number ={1},
  year={2013},
  pages = {119–-194}  
}

@article {chiral,
    AUTHOR = {Pryhuber, A. and Sinn, R. and Thomas, R. R.},
     TITLE = {Existence of two view chiral reconstructions},
   JOURNAL = {SIAM J. Appl. Algebra Geom.},
    VOLUME = {6},
      YEAR = {2022},
    NUMBER = {1},
     PAGES = {41--76}
}

@book{SturmfelsInvariant,
    AUTHOR = {Sturmfels, B.},
     TITLE = {Algorithms in invariant theory},
    SERIES = {Texts and Monographs in Symbolic Computation},
   EDITION = {Second Edition},
 PUBLISHER = {Springer},
      YEAR = {2008},
      ISBN = {978-3-211-77416-8}
}

@book{Hartshorne,
    AUTHOR = {Hartshorne, R.},
     TITLE = {Algebraic geometry},
    SERIES = {Graduate Texts in Mathematics, No. 52},
 PUBLISHER = {Springer-Verlag, New York-Heidelberg},
      YEAR = {1977},
      ISBN = {0-387-90244-9}
}

@book{Hilbert-Cohn-Vossen,
  title={Geometry and the Imagination},
  author={Hilbert, D. and Cohn-Vossen, S.},
  isbn={9781470463021},
  series={AMS Chelsea Publishing},
  year={1952},
  publisher={American Mathematical Society}
}

@article{buckley_kosir,
  title={Determinantal representations of smooth cubic surfaces},
  author={Buckley, A. and Ko\v{s}ir, T.},
  journal={Geometriae Dedicata},
  year={2006},
  volume={125},
  pages={115-140}
}

@article{coble,
    AUTHOR = {Coble, A.},
     TITLE = {Point sets and allied {C}remona groups. {I}},
   JOURNAL = {Trans. Amer. Math. Soc.},
  FJOURNAL = {Transactions of the American Mathematical Society},
    VOLUME = {16},
      YEAR = {1915},
    NUMBER = {2},
     PAGES = {155--198}
}

@article {Slyusar,
    AUTHOR = {Slyusar, V.},
     TITLE = {A family of face products of matrices and its properties},
   JOURNAL = {Kibernet. Sistem. Anal.},
  FJOURNAL = {Natsional\cprime naya Akademiya Nauk Ukrainy. Institut Kibernetiki
              im. V. M. Glushkova. Kibernetika i Sistemny\u{\i} Analiz},
      YEAR = {1999},
    NUMBER = {3},
     PAGES = {43--49, 189},
      ISSN = {1019-5262}
}

@book{dolgachev,
  title={Classical algebraic geometry: a modern view},
  author={Dolgachev, I. V.},
  year={2012},
  publisher={Cambridge University Press}
}

@article {dolgachev_ortland,
  title={Point sets in projective spaces and theta functions},
  author={Dolgachev, I. and Ortland, D.},
  journal={Ast{\'e}risque},
  volume={165},
  year={1988}
}

@book {derksen_kemper,
    AUTHOR = {Derksen, H. and Kemper, G.},
     TITLE = {Computational invariant theory},
    SERIES = {Encyclopaedia of Mathematical Sciences},
    VOLUME = {130},
 PUBLISHER = {Springer, Heidelberg},
      YEAR = {2015},
     PAGES = {xxii+366},
      ISBN = {978-3-662-48420-3; 978-3-662-48422-7}
}

@article{LOAN200085,
title = {The ubiquitous {K}ronecker product},
journal = {Journal of Computational and Applied Mathematics},
volume = {123},
number = {1},
pages = {85-100},
year = {2000},
note = {Numerical Analysis 2000. Vol. III: Linear Algebra},
issn = {0377-0427},
doi = {https://doi.org/10.1016/S0377-0427(00)00393-9},
url = {https://www.sciencedirect.com/science/article/pii/S0377042700003939},
author = {C.F. Van Loan},
abstract = {The Kronecker product has a rich and very pleasing algebra that supports a wide range of fast, elegant, and practical algorithms. Several trends in scientific computing suggest that this important matrix operation will have an increasingly greater role to play in the future. First, the application areas where Kronecker products abound are all thriving. These include signal processing, image processing, semidefinite programming, and quantum computing. Second, sparse factorizations and Kronecker products are proving to be a very effective way to look at fast linear transforms. Researchers have taken the Kronecker methodology as developed for the fast Fourier transform and used it to build exciting alternatives. Third, as computers get more powerful, researchers are more willing to entertain problems of high dimension and this leads to Kronecker products whenever low-dimension techniques are “tensored” together.}
}

@article{demmel1987geometry,
  title={The Geometry of Ill-conditioning},
  author={Demmel, J. W.},
  journal={Journal of Complexity},
  volume={3},
  number={2},
  pages={201--229},
  year={1987},
  publisher={Elsevier}
}

@article{demmel1987condition,
  title={On condition numbers and the distance to the nearest ill-posed problem},
  author={Demmel, J. W.},
  journal={Numerische Mathematik},
  volume={51},
  pages={251--289},
  year={1987},
  publisher={Springer}
}

@book{burgisser2013condition,
  title={Condition: The geometry of numerical algorithms},
  author={B{\"u}rgisser, P. and Cucker, F.},
  volume={349},
  year={2013},
  publisher={Springer Science \& Business Media}
}

@article{hartley2007critical,
  title={Critical configurations for projective reconstruction from multiple views},
  author={Hartley, R. and Kahl, F.},
  journal={International Journal of Computer Vision},
  volume={71},
  number={1},
  pages={5--47},
  year={2007},
  publisher={Springer}
}

@article{luong-faugeras,
	author = {Luong, Q.-T. and Faugeras, O. D.},
	journal = {International Journal of Computer Vision},
	number = {1},
	pages = {43--75},
	title = {The fundamental matrix: Theory, algorithms, and stability analysis},
	url = {https://doi.org/10.1007/BF00127818},
	volume = {17},
	year = {1996},
	bdsk-url-1 = {https://doi.org/10.1007/BF00127818}}

@inproceedings{kahl2002critical,
  title={Critical curves and surfaces for euclidean reconstruction},
  author={Kahl, F. and Hartley, R.},
  booktitle={Computer Vision—ECCV 2002: 7th {E}uropean {C}onference on {C}omputer {V}ision {C}openhagen, {D}enmark, May 28--31, 2002 {P}roceedings, {P}art {II} 7},
  pages={447--462},
  year={2002},
  organization={Springer}
}

@article{bratelund2024classification, 
    title={Critical configurations for three projective views}, 
    volume=129, 
    number=3, 
    journal={Mathematica Scandinavica}, 
    author={Bråtelund, M.}, 
    year=2023, 
    month={October}
}

@article{bratelund2views,
title = {Critical configurations for two projective views, a new approach},
journal = {Journal of Symbolic Computation},
volume = {120},
year = {2024},
author = {M. Bråtelund}
}

@inproceedings{fan2022instability,
  title={On the Instability of Relative Pose Estimation and RANSAC's Role},
  author={Fan, H. and Kileel, J. and Kimia, B.},
  booktitle={{IEEE/CVF} {C}onference on {C}omputer {V}ision and {P}attern {R}ecognition},
  pages={8925--8933},
  year={2022},
  organization={IEEE Computer Society}
}

@book{demmel1997applied,
  title={Applied numerical linear algebra},
  author={Demmel, James W},
  year={1997},
  publisher={SIAM}
}
\newpage 
\appendix 
\section{Facts about cubic surfaces}
\label{sec:cubic surface facts}
This is a collection of classical facts about cubic surfaces needed in Section~\ref{sec:k=6 geometry}.
See \cite{Hartshorne}*{V. Chapter 4} for a modern reference and \cite{coble} or \cite{Hilbert-Cohn-Vossen} for classical references.

\begin{lem}\label{lem:cubic surface facts 1}
Let $\mathcal{S} \subset \PP^3$ be a smooth cubic surface.
Then,
\begin{enumerate}
    \item $\mathcal{S}$ can be obtained by blowing up $6$ points 
    $p_1, \ldots, p_6 \in \PP^2$ in general position.
    The exceptional curves of $p_1, \ldots, p_6$ in the blow up morphism $\pi \,:\, \mathcal{S} \rightarrow \PP^2$ are $6$ lines $\ell_1, \ldots, \ell_6$ on $\mathcal{S}$. 
    
    
    \item The cubic surface $\mathcal{S}$ has exactly $27$ (complex) lines. They are: 
    \begin{enumerate}
        \item the exceptional lines $\ell_1, \ldots,\ell_6$,
        \item the strict transforms $\ell_{ij}$ of the lines in $\PP^2$ passing through $p_i,p_j$ for $1 \leq i < j \leq 6$ ($15$ of them), and 
        \item the strict transforms $\ell_j'$ of the conics in $\PP^2$ passing through five of the $p_i$ for $i \neq j, j=1,\ldots,6$ ($6$ of them). 
    \end{enumerate}
    
     \item The two sets of lines $\{\ell_1, \ldots, \ell_6\}$ and 
     $\{\ell_1', \ldots, \ell_6'\}$ 
     form a Sch\"afli double six on $\mathcal{S}$. 
     There are $36$ Schl\"afli double sixes on $\mathcal{S}$. Each double six is uniquely determined by three of the line pairs $(\ell_i,\ell_i')$. 
     
    \item 
    Any set of $6$ mutually skew lines among the $27$ lines on $\mathcal{S}$ can play the role of 
    $\ell_1, \ldots, \ell_6$. Precisely, if $\{g_1, \ldots, g_6\}$ is a set of $6$ mutually skew lines among the $27$ lines, then there is another  morphism 
    $\pi' \,:\, \mathcal{S} \rightarrow \PP^2$, making $\mathcal{S}$ isomorphic to $\PP^2$ with six points $q_1, \ldots, q_6$ in general position blown up  such that $g_1, \ldots, g_6$ are the exceptional curves of 
    $q_1,\ldots,q_6$ under  $\pi'$. 
\end{enumerate}
\end{lem}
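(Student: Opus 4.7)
The plan is to treat these as classical consequences of the Picard lattice structure of $\mathcal{S}$, following Hartshorne, Chapter V, \S 4, or Dolgachev. The core observation is that $\mathcal{S}$ is a smooth del Pezzo surface of degree $3$: by adjunction $K_\mathcal{S} = (K_{\PP^3} + \mathcal{O}(3))|_\mathcal{S} = -\mathcal{O}_\mathcal{S}(1)$, so $-K_\mathcal{S}$ is very ample with $K_\mathcal{S}^2 = 3$. Part (1) then follows from the Demazure--Manin classification of del Pezzo surfaces, which asserts that every smooth del Pezzo surface of degree $d \leq 6$ is the blow up of $\PP^2$ at $9-d$ points in general position. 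A more concrete route is to pick two skew lines on $\mathcal{S}$ (their existence is itself classical), use the pencils of hyperplanes through each to construct a birational map $\mathcal{S} \dashrightarrow \PP^1 \times \PP^1$, and then contract a sixth $(-1)$-curve to reach $\PP^2$ with six points blown up.

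Given (1), I would prove (2) using $\textup{Pic}(\mathcal{S}) \cong \mathbb{Z}^7$ with basis $H, E_1, \ldots, E_6$, intersection form $H^2 = 1$, $E_i^2 = -1$, $H \cdot E_i = 0$, and $K_\mathcal{S} = -3H + \sum_i E_i$. Lines on $\mathcal{S}$ correspond exactly to $(-1)$-curves, i.e. classes $L = aH - \sum b_i E_i$ with $L^2 = L \cdot K_\mathcal{S} = -1$. Solving the Diophantine system $a^2 - \sum b_i^2 = -1$ and $3a - \sum b_i = 1$ yields exactly $27$ solutions, split by $a \in \{0,1,2\}$ into the three families in the statement, each with a unique effective representative by Riemann--Roch on $\mathcal{S}$.

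The incidence relations in (3) reduce to intersection computations in the Picard lattice; for instance $E_i \cdot (2H - \sum_{k \neq j} E_k) = 1 - \delta_{ij}$ and analogous calculations show that the two sextuples $\{\ell_i\}$ and $\{\ell_j'\}$ are each mutually skew and meet cross-wise exactly as Definition~\ref{def:SD6} requires. The count of $36$ double sixes and their determination by three pairs I would derive from the action of the Weyl group $W(E_6)$ on $K_\mathcal{S}^\perp \subset \textup{Pic}(\mathcal{S})$, generated by reflections through the roots (classes $\alpha$ with $\alpha^2 = -2$, $\alpha \cdot K_\mathcal{S} = 0$); this group permutes the $27$ lines, and a standard orbit-stabilizer count yields both the number $36$ and the fact that three ordered Schl\"afli pairs have trivial stabilizer, forcing the uniqueness claim. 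For (4), the same $W(E_6)$-action is transitive on ordered sextuples of mutually skew lines, so any such $\{g_1, \ldots, g_6\}$ can be carried to the standard basis $(E_1, \ldots, E_6)$ by a Picard-lattice automorphism; contracting these six disjoint $(-1)$-curves via Castelnuovo's contraction theorem produces the desired morphism $\pi' \colon \mathcal{S} \to \PP^2$ with $q_i = \pi'(g_i)$ in general position.

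The main obstacle is really only the classification in (1); everything else is careful bookkeeping inside the rank-$7$ Picard lattice once the blow-up model is in hand. I would therefore devote most of the writeup to (1), either invoking the del Pezzo classification as a black box or sketching the two-skew-lines argument, and treat (2)--(4) as combinatorial consequences of the root system $E_6$ sitting inside $K_\mathcal{S}^\perp$.
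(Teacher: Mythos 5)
The paper offers no proof of this lemma: it is stated in Appendix~\ref{sec:cubic surface facts} as a collection of classical facts, with pointers to Hartshorne V.4, Coble, and Hilbert--Cohn-Vossen. Your Picard-lattice argument (adjunction to identify $\mathcal{S}$ as a degree-$3$ del Pezzo surface for (1), enumeration of $(-1)$-classes for (2), intersection numbers and the $W(E_6)$-action for (3)--(4)) is precisely the standard proof found in those cited references, so it is correct and matches what the paper intends.
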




We now note several facts about double sixes in $\PP^3$ (or more generally line configurations in $\PP^3$). 

\begin{lem}\label{lem:double six facts}
\begin{enumerate}
    \item Each double six in $\PP^3$ is contained in a unique non-singular cubic surface and thus forms part of the set of $27$ lines on that surface. 
    \item Any set of $6$ mutually skew lines among $27$ lines that satisfy the incidence relations is one half of a unique Schl\"afli double six and determines the other lines and the cubic surface.
\end{enumerate}
\end{lem}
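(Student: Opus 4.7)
The plan is to derive both parts using Bézout's theorem for uniqueness and the blow-down structure of Lemma~\ref{lem:cubic surface facts 1} for the other pieces.

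\textbf{Uniqueness in (1).} The cleanest half of the lemma. Suppose $\mathcal{S}_1$ and $\mathcal{S}_2$ are two distinct smooth cubic surfaces both containing the 12 lines of a given Schl\"afli double six. Because each $\mathcal{S}_i$ is smooth, hence irreducible, Bézout's theorem implies that $\mathcal{S}_1 \cap \mathcal{S}_2$ is a curve of degree $3 \cdot 3 = 9$. But this intersection contains all 12 distinct lines of the double six, forcing $\deg(\mathcal{S}_1 \cap \mathcal{S}_2) \geq 12$, a contradiction. Therefore $\mathcal{S}_1 = \mathcal{S}_2$, and the cubic surface containing a double six is unique.

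\textbf{Existence in (1).} I would exploit the quadric structure forced by the incidences. The three mutually skew lines $\ell_1,\ell_2,\ell_3$ lie on a unique smooth quadric $Q$; each of $\ell_4',\ell_5',\ell_6'$ meets all three of $\ell_1,\ell_2,\ell_3$ by the double-six condition ($j\neq i$), and since a line meeting a smooth quadric in three points must be contained in it, the lines $\ell_4',\ell_5',\ell_6'$ also lie on $Q$ and constitute its opposite ruling. A symmetric argument places $\ell_4,\ell_5,\ell_6$ and $\ell_1',\ell_2',\ell_3'$ on a second smooth quadric $Q'$. From this, one invokes the classical Schl\"afli double six theorem (see, e.g., Hartshorne~Ch.~V or Hilbert-Cohn-Vossen) to produce a cubic vanishing on all twelve lines. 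Smoothness is automatic: a singular cubic surface contains strictly fewer than 27 lines and in particular cannot support a full Schl\"afli double six of twelve distinct lines.

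\textbf{Part (2).} Let $g_1,\ldots,g_6$ be six mutually skew lines among the 27 lines on a smooth cubic surface $\mathcal{S}$, satisfying the double-six incidence conditions with six other lines. By Lemma~\ref{lem:cubic surface facts 1}(4), $g_1,\ldots,g_6$ are the exceptional curves of a blow-down morphism $\pi:\mathcal{S}\to\PP^2$, with the blown-up points $q_1,\ldots,q_6\in\PP^2$ in general position. By Lemma~\ref{lem:cubic surface facts 1}(2)--(3), $\mathcal{S}$ together with its 27 lines and its 36 double sixes is now determined, and the other six lines of the double six containing $\{g_i\}$ are precisely the strict transforms of the unique conics through five of the six points $q_j$. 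Uniqueness of $\mathcal{S}$ as a cubic surface through these 12 lines follows from part (1) applied to the resulting double six.

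\textbf{Main obstacle.} The hard part is the existence direction of (1). A naive dimension count on $H^0(\PP^3,\mathcal{O}(3)) = \CC^{20}$ shows that 12 lines can impose up to 48 linear conditions, while we need the span to be exactly 19-dimensional in order to yield a unique projective cubic. Verifying that the double-six incidence conditions force the rank drop from $48$ down to exactly $19$ is delicate, and for this reason my plan invokes the classical Schl\"afli construction rather than a bare-hands linear-algebra proof.
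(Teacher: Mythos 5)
The paper does not actually prove this lemma: it sits in Appendix~\ref{sec:cubic surface facts} as a catalogue of classical facts, justified only by citations to Hartshorne, Coble, and Hilbert--Cohn-Vossen. So there is no "paper proof" to match; what you have written is an attempt to supply the classical arguments, and most of it is sound. Your Bézout argument for uniqueness in (1) is correct and complete: two distinct smooth (hence irreducible) cubics meet in a curve of degree $9$, which cannot contain the $12$ distinct lines of a double six. Your reduction of the existence question to the two quadrics $Q \supset \{\ell_1,\ell_2,\ell_3,\ell_4',\ell_5',\ell_6'\}$ and $Q' \supset \{\ell_4,\ell_5,\ell_6,\ell_1',\ell_2',\ell_3'\}$ is also correct (each $\ell_j'$ with $j\in\{4,5,6\}$ meets the three skew lines $\ell_1,\ell_2,\ell_3$, hence lies on the unique quadric through them), and your honesty about the failure of the naive $48$-versus-$19$ dimension count is exactly the right diagnosis of where the difficulty lives. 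Part (2) is handled correctly by blowing down via Lemma~\ref{lem:cubic surface facts 1}(4) and reading off the complementary six lines as strict transforms of conics via Lemma~\ref{lem:cubic surface facts 1}(2)--(3), with uniqueness inherited from part (1).

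Two soft spots remain, both at the level of "defer to the classics" rather than outright error. First, the Schl\"afli double six theorem as usually stated asserts the closing of the line configuration (the existence of the twelfth line), not the existence of a cubic through all twelve; the existence of the cubic is a companion classical result, typically obtained by realizing the configuration as the exceptional/conic lines of a blow-up of six points, so your citation should really be to that statement. Since the paper itself cites the same sources for the entire lemma, this is acceptable, but it is not a proof. Second, your claim that smoothness is "automatic" because a singular cubic has fewer than $27$ lines is a non sequitur as written: a one-nodal cubic still has $21$ lines, which does not by itself exclude $12$ of them forming a double six. The correct statement is that a cubic surface containing six pairwise skew lines must be smooth (cones have all lines concurrent; cubics with isolated singularities have too few mutually skew lines), and that requires its own short argument or citation.
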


Next we talk about determinantal representations of a cubic surface.

\begin{defn} \label{def:det rep}
A {\bf determinantal representation} of a cubic surface $\mathcal{S}$ defined by a cubic equation $F(z) =F(z_0,z_1,z_2,z_3) = 0$ is a $3\times 3$ matrix of linear forms 
\begin{align}
  M(z) = M(z_0,z_1,z_2,z_3)=z_0M_0+z_1M_1+z_2M_2+z_3M_3 
\end{align} 
such that $\det(M(z))=\lambda F(z)$ for some $\lambda\neq 0$. Two representations $M(z)$ and $M'(z)$ are equivalent if $M'(z) = AM(z)B$ for $A,B \in \textup{GL}(3)$.
\end{defn}
Note that if $M(z)$ is a determinantal representation of $\mathcal{S}$, then so is $M(z)^\top$.
We present the following facts about determinantal representations of a cubic surface 
\cite{buckley_kosir}. 

\begin{lem}\label{lem:determinantal representation facts}
\begin{enumerate}
    \item Every smooth cubic surface $\mathcal{S}$ has a determinantal representation. 
    \item The equivalence classes of determinantal representations of $\mathcal{S}$ are in bijection with sets of $6$ mutually skew lines on $\mathcal{S}$ that are the 
    exceptional curves of the blow up of $\PP^2$ at the $6$ points that gives $\mathcal{S}$. 
    
    \item If the determinantal representation $M$ of $\mathcal{S}$ corresponds to the skew lines $\ell_1, \ldots, \ell_6$ on $\mathcal{S}$, then $M^\top$ corresponds to another $6$ skew lines 
    $\ell_1', \ldots, \ell_6'$ on $\mathcal{S}$ such that the pair of $6$ lines form a Schl\"afli double six on $\mathcal{S}$. Thus there are 
    $72$ equivalence classes of determinantal representations of $\mathcal{S}$ one for each half of the $36$ Schl\"afli double sixes on $\mathcal{S}$.
    \item The choice of a Schl\"afli double six $\ell_1,\ldots,\ell_6,\ell_1',\ldots,\ell_6'$ on $\mathcal{S}$ is equivalent to the choice of a pair of equivalence classes of determinantal representations of $\mathcal{S}$, namely$[M]$ and $[M^\top]$. (A smooth cubic surface cannot have a symmetric determinantal representation and hence $[M] \neq [M^\top]$.) 
\end{enumerate}
\end{lem}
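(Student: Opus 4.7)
The plan is to establish the four parts by leveraging the duality between right and left kernels of the matrix pencil, using the classical Hilbert–Burch theorem as the main engine.

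For part (1), I would appeal to Lemma~\ref{lem:cubic surface facts 1}(1) to realize the smooth cubic surface $\mathcal{S}\subset\PP^3$ as the blow up of $\PP^2$ at $6$ points $p_1,\ldots,p_6$ in general position. The homogeneous ideal of these $6$ points in $\CC[u_0,u_1,u_2]$ is a height-$2$ Cohen–Macaulay ideal, so Hilbert–Burch yields a $3\times 4$ matrix $N$ of linear forms whose four maximal minors generate the ideal. Reinterpreting the $4$ columns of $N$ as the coefficients of a pencil $M(z)=z_0M_0+z_1M_1+z_2M_2+z_3M_3$ of $3\times 3$ matrices, one gets $\det M(z)=\lambda F(z)$ where $F$ is the defining cubic of $\mathcal{S}$, producing a determinantal representation.

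For part (2), the bijection goes as follows. Given $M(z)$, the map $\phi_M\colon\mathcal{S}\dashrightarrow\PP^2$ sending $z$ to the unique (up to scale) generator of the right null space of $M(z)$ extends to a morphism on the smooth locus because $M(z)$ has rank exactly $2$ on the smooth $\mathcal{S}$; I would show $\phi_M$ is a blow-down realizing $\mathcal{S}$ as the blow up of $\PP^2$ at $6$ points, and identify the contracted curves with $6$ mutually skew lines $\ell_1,\ldots,\ell_6$. In the other direction, given such a $6$-tuple of skew exceptional lines for a blow-down $\pi\colon\mathcal{S}\to\PP^2$, the ideal sheaf of the $6$ image points together with Hilbert–Burch produces a matrix $M$, unique up to $\textup{GL}(3)\times\textup{GL}(3)$ equivalence, with $\phi_M=\pi$. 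Two representations are equivalent (in the sense of Definition~\ref{def:det rep}) precisely when they give the same $\phi_M$, hence the same exceptional $6$-tuple.

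For part (3), transposition preserves the determinant, so $M(z)^\top$ is again a determinantal representation of $\mathcal{S}$, and $\phi_{M^\top}$ is built instead from the left null space. A local computation at a point $z\in\mathcal{S}$ shows that the exceptional lines $\ell_i$ (right-kernel contractions) and $\ell_j'$ (left-kernel contractions) meet if and only if the corresponding blown-up points are distinct, which by Lemma~\ref{lem:cubic surface facts 1} gives exactly the incidence pattern of a Schl\"afli double six. Combined with Lemma~\ref{lem:cubic surface facts 1}(3), which counts $36$ double sixes on $\mathcal{S}$, this yields $72$ equivalence classes of determinantal representations, paired as $\{[M],[M^\top]\}$. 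Part (4) is then a restatement of this correspondence; the impossibility of a symmetric determinantal representation follows because symmetry would force the two halves of the double six to coincide, contradicting the disjointness built into Definition~\ref{def:SD6}.

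The main obstacle is justifying that $\phi_M$ is genuinely a blow-down morphism with exactly $6$ skew exceptional lines—i.e., identifying the cokernel sheaf of $M(z)$ on $\mathcal{S}$ as a line bundle whose pushforward $\phi_{M,*}$ encodes the $6$-point configuration on $\PP^2$. This requires the Hilbert–Burch characterization of the resolution of $\phi_{M,*}\mathcal{O}_{\mathcal{S}}$ together with a check that the rank-$1$ locus of $M(z)$ on $\mathcal{S}$ consists of precisely $6$ disjoint lines, which is the content of the cited result \cite{buckley_kosir}.
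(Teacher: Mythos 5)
The paper does not actually prove this lemma: it is stated as a collection of classical facts with a citation to \cite{buckley_kosir}, followed by explicit ``recipes'' (the Hilbert--Burch matrix $L(u)$, the pencil $M(z)$ defined by $M(z)u=L(u)z$, and the right/left null-space maps realizing the two blow-downs) that are exactly the constructions you sketch. Your outline is a correct reconstruction of that standard argument, and like the paper you defer the one genuinely technical point (that the right-kernel map is a blow-down with precisely six skew exceptional lines) to the cited literature, so there is nothing substantive to add.
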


Here are various recipes to explicitly realize some of the above assertions. 
\begin{enumerate}
    \item If $\mathcal{S}$ is the blow up of $\PP^2$ at the $6$ points $p_1, \ldots, p_6$, then $\mathcal{S}$ can be constructed as follows. Pick 
a basis $\{f_0,f_1,f_2,f_3\}$ of the $4$-dimensional vector space of cubics through $p_1, \ldots, p_6$ and map 
\begin{align} \label{eq:usual blowup} 
    u \in \PP^2 \mapsto (f_0(u):f_1(u):f_2(u):f_3(u)) \in \PP^3. 
    \end{align}

\item The variety of $\langle f_0,\ldots, f_3 \rangle$ is $\{p_0,\ldots,p_6\}$. By the {\em Hilbert-Burch theorem} \cite{eisenbud-book}*{Section 20.4} there is a $3 \times 4$ matrix $L(u)$ of linear forms in 
$u_0,u_1,u_2$ whose maximal minors are $f_0,\ldots,f_3$. The matrix $L(u)$ generically has 
rank $3$ and has rank $2$ exactly when $u=p_i$ for some $i=1,\ldots,6$. Define a $3 \times 3$ matrix 
$M(z)$ of linear forms in $z_0,z_1,z_2,z_3$ by the relation 
\begin{align}\label{eq:HilbertBurch}
    M(z) \cdot \begin{pmatrix} u_0\\u_1\\u_2 \end{pmatrix} = L(u) \cdot \begin{pmatrix} z_0\\z_1\\z_2\\z_3 \end{pmatrix}.
\end{align}
Then $M(z)$ is a determinantal representation of $\mathcal{S}$. 

\item The blow up of $\PP^2$ at $p_1, \ldots, p_6$ can be seen via \eqref{eq:HilbertBurch}.
If $\bar{u} \in \PP^2$ is not one of $p_1, \ldots, p_6$, then some $f_i(\bar{u}) \neq 0$ and  
$\rank(L(\bar{u}) ) = 3$. Therefore, the null space of $L(\bar{u})$ is a unique point 
$\bar{z} \in \PP^3$ and $M(\bar{z})\bar{u} = L(\bar{u})\bar{z}=0$. This means that $\det(M(\bar{z}))=0$ and $\bar{z} \in \mathcal{S}$. 
If $\bar{z}$ is a generic point on $\mathcal{S}$, then $\rank(M(\bar{z})) = 2$, and the blow up morphism $\pi$ sends $\bar{z}$ to the unique point $\bar{u} \in \PP^2$ that generates the right null space of $M(\bar{z})$.

\item The $6$ exceptional curves 
of the blow up are precisely 
\begin{align}
    \ell_i := \{ z \in \PP^3 \,:\, M(z) p_i = 0 \} \textup{ for } i = 1, \ldots, 6.
\end{align}
 By what is stated above, 
$\rank(L(u))=2$ if and only if $u=p_i$ for some $i=1,\ldots,6$ if and only if (via 
\eqref{eq:HilbertBurch}) $M(z)u=L(u)z=0$ is a system of $2$ linearly independent equations in 
$z$ (i.e., defines a line in $\PP^3$). This line is entirely on $\mathcal{S}$ since 
$\det(M(z))=0$ if $M(z)p_i = 0$. 

\item Corresponding results hold for the 
determinantal representation  $M(z)^\top$ of $\mathcal{S}$.
\begin{enumerate}
\item To obtain the points $q_1, \ldots, q_6 \in \PP^2$ whose blow up is $\mathcal{S}$, compute 
$L'(u)$ via 
\begin{align}\label{eq:Hilbert-Burch-2}
    M(z)^\top u = L'(u) z.
\end{align}
The variety of the maximal minors of $L'(u)$ is $\{q_1, \ldots, q_6\}$. 
\item A point $\bar{y} \in \PP^2$ not equal to any 
$q_i$ is sent to the unique point $\bar{z}$ in 
$\PP^3$ in the right nullspace of $L'(\bar{y})$. This means 
$L'(\bar{y})\bar{z} = M(\bar{z})^\top \bar{y} = 0$ and $\bar{z} \in \mathcal{S}$ since 
$\det(M(\bar{z}))^\top = 0$.

\item The morphism $\pi' \,:\, \mathcal{S} \rightarrow \PP^2$ sends $\bar{z}$ to the unique point 
$\bar{y} \in \PP^2$ that 
generates the right nullspace of $M(\bar{z})^\top$, or equivalently, the 
left nullspace of $M(\bar{z})$.

\item The exceptional curves of the blow up are the lines 
\begin{align*}
    \ell_j' := \{ z \in \PP^3 \,:\, q_j^\top M(z) = 0 \} \textup{ for } j = 1, \ldots, 6.
\end{align*}
\end{enumerate} 
\end{enumerate}

The following theorem follows from the above facts.
\begin{thm} \label{thm:summary of cubic surface facts}
Given a determinantal representation $M(z)$ of 
a smooth cubic surface $\mathcal{S} \subset \PP^3$, there exists $6$ points $(p_i,q_i) \in \PP^2 \times \PP^2$ 
such that $\mathcal{S}$ is the blow up of the first $\PP^2$ at $\{p_i\}_{i=1}^6$ and also the blow up of the second $\PP^2$ at $\{q_j\}_{j=1}^6$. The exceptional 
curves of the two blow ups form a Schl\"afli double six on $\mathcal{S}$:
\begin{align}
 \ell_i := \{ z \in \PP^3 \,:\, M(z) p_i = 0 \} \textup{ for } i = 1, \ldots, 6,\\
     \ell_j' := \{ z \in \PP^3 \,:\, q_j^\top M(z) = 0 \} \textup{ for } j = 1, \ldots, 6.
\end{align}
\end{thm}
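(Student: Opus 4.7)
The plan is to assemble the statement directly from the Hilbert--Burch recipe described in the bulleted list preceding the theorem, applied symmetrically to $M(z)$ and to its transpose $M(z)^\top$, and then to invoke Lemma~\ref{lem:determinantal representation facts} to identify the resulting line configuration as a Schl\"afli double six.

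First I would construct the $p_i$'s and the $\ell_i$'s. Starting from the given determinantal representation, use the bilinear identity $M(z)u = L(u)z$ (matching coefficients of $z_0,\ldots,z_3$) to extract a $3 \times 4$ matrix $L(u)$ of linear forms in $u=(u_0,u_1,u_2)$. By the Hilbert--Burch theorem, the four maximal minors $(f_0,f_1,f_2,f_3)$ of $L(u)$ are cubics with common zero locus a set of six points $\{p_1,\ldots,p_6\}\subset \PP^2$; smoothness of $\mathcal{S}$ forces these points to be in general position. The bullet points preceding the theorem explain that $u \mapsto (f_0(u):\cdots:f_3(u))$ realizes the blow up of $\PP^2$ at $\{p_i\}$ as $\mathcal{S}$. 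For each $i$ we have $M(z)p_i = L(p_i)z$; since $\rank L(p_i) = 2$ (Hilbert--Burch characterizes the $p_i$ as the rank-drop locus), this imposes two independent linear equations on $z$, so $\ell_i = \{z : M(z)p_i = 0\}$ is a line in $\PP^3$ lying on $\mathcal{S}$ (because $\det M(z) = 0$ whenever $M(z)$ has a nontrivial right kernel). That these $\ell_i$ are precisely the exceptional curves of the blow-up morphism is part of the recipe already stated.

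Next I would apply the identical procedure to $M(z)^\top$. The analogous relation $M(z)^\top u = L'(u)z$ produces a second Hilbert--Burch data set, yielding six points $q_1,\ldots,q_6 \in \PP^2$ and lines $\ell_j' = \{z : q_j^\top M(z) = 0\}$ on $\mathcal{S}$. The same argument as above identifies $\mathcal{S}$ as the blow up of this second $\PP^2$ at $\{q_j\}$, with $\{\ell_j'\}$ the corresponding exceptional divisors and with blow-down morphism $\pi'$ sending a generic $z \in \mathcal{S}$ to the unique point generating the left null space of $M(z)$.

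Finally, to conclude that $\{\ell_i\}_{i=1}^6$ and $\{\ell_j'\}_{j=1}^6$ form a Schl\"afli double six on $\mathcal{S}$, I would invoke Lemma~\ref{lem:determinantal representation facts}(3), which asserts exactly this pairing between the line systems attached to $M$ and to $M^\top$. The main obstacle, were one to attempt a hands-on verification, would be the mixed incidence condition $\ell_i \cap \ell_j' \neq \emptyset \iff i \neq j$: mutual skewness within each family is immediate from the blow-up picture (distinct exceptional curves of a blow up do not meet), but the cross incidence requires analyzing when the simultaneous system $M(z)p_i = 0$, $q_j^\top M(z) = 0$ admits a solution. This reduces to a rank count that fails exactly when $i=j$, due to smoothness of $\mathcal{S}$ (which rules out a symmetric determinantal representation, forcing $[M]\neq [M^\top]$). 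Relying on Lemma~\ref{lem:determinantal representation facts}(3) cleanly bypasses this calculation and completes the proof.
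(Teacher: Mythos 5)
Your proposal is correct and follows essentially the same route as the paper, which simply asserts that the theorem "follows from the above facts," i.e., the Hilbert--Burch recipe applied to $M(z)$ and $M(z)^\top$ together with Lemma~\ref{lem:determinantal representation facts}(3) for the double-six pairing. Your fleshed-out version of that assembly, including the observation that the cross-incidence condition is exactly what Lemma~\ref{lem:determinantal representation facts}(3) packages, matches the paper's intent.
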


\end{document}